\newtheorem{theorem}{Theorem}
\newtheorem{remark}[theorem]{Remark}
\newtheorem{example}[theorem]{Example}
\title{Geometry of Loci in Real Inner Product Spaces}
\author{Luis Chiner Carrillo}
\begin{document}
\maketitle

\begin{abstract}

This paper generalizes the notion of geometric curves such as hyperbolas and ellipses to more general vector spaces with an associated inner product. This is done by generalizing the definition in terms of loci and foci of said curves in Euclidean geometry to a general vector space with a real inner product, through which a norm can be induced. Through this generalization and focusing on the curves that are obtained through linear combinations of norms, we explore some properties of said curves. Specifically, we explore the addition of vectors in the curve, and in what other curves this addition can be found in relation to the original curve. Lastly, we observe the effects of applying the isomorphism to the geometric curve in the vector space onto $\mathbb{R}^n$ and we compare geometric curves obtained with the same definition in different vector spaces with different norms.

\end{abstract}
\pagebreak

\tableofcontents

\pagebreak

\section{Introduction}

This investigation is centered around loci in  real inner product vector spaces, that is, vector spaces from which a norm can be induced. Specifically, we investigate the loci that can be obtained as a linear combination of norms with respect to certain points, called focus points, or foci. We do so by generalizing the notion of loci as understood in Euclidean geometry to general vector spaces by using the norm of vectors (which in $\mathbb{R}^n$ with the natural operations and the standard inner product yields the Euclidean geometry) rather than the distance between points. 

\emph{Locus points}, or \emph{loci}, are points whose location is determined by one or more conditions. In other words, the location of these points satisfy a property or some properties and commonly form a line, a segment, a curve, or a surface.

Locus points are generally defined in relation to a set of points, known as \emph{focus points}, and a property of these points with regard to the location of the locus points. The type of relation between focus points and locus points that this investigation is centered around is given by an equation $g(x)=c$ where $g \colon \mathbb{R}^n \to \mathbb{R}$ is a linear function and $c \in \mathbb{R}$. The inputs of $g $ are the norms from the $n$ foci to the loci. 

\begin{example}
    
Consider the definition of an ellipse and its function. An ellipse is defined to be the set of all points whose sum of distances to two foci is constant. In this case, since there are two focus points,

\begin{displaymath}
g\colon \mathbb{R}^2 \to \mathbb{R}.
\end{displaymath}

If we consider these focus points to have coordinates $(-c,0), (c,0)$  and we set the constant that the two distances from the focus points to the locus points must add up to \begin{math}2a\end{math}, then we have that:

\begin{displaymath}g(\sqrt{(x+c)^2 + y^2}, \sqrt{(x-c)^2+y^2})= \sqrt{(x+c)^2 + y^2} + \sqrt{(x-c)^2+y^2} = 2a.\end{displaymath}

In this case, the linear relation observed in $g$ is $g(x,y) = x+y$, since the locus points we are looking for must have a constant sum of distances (as per the definition). Note also that in this case we are working in the vector space $\mathbb{R}^2$ with the standard operations and standard inner product. This induces the Euclidean geometry we are used to, however, we will work on generalizing this in this paper.

We can look at the graph and see that we obtain what we desire, an ellipse with focal points with coordinates $(-c,0), (c,0) $.

\begin{figure}[h!]
\centering
\includegraphics[width=0.75\textwidth]{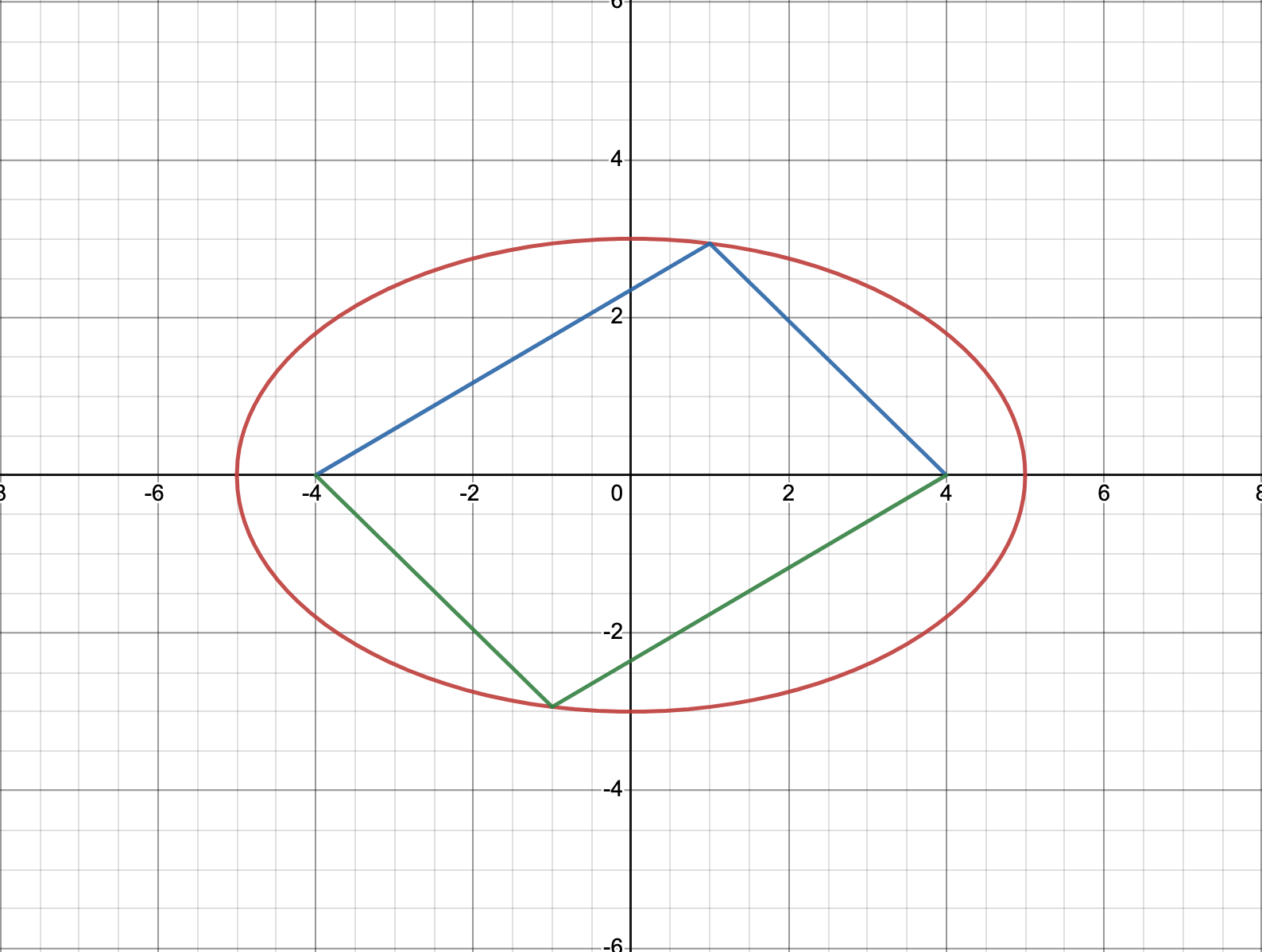}
\caption{\label{fig:graph}Graph of  $\sqrt{(x+c)^2 + y^2} + \sqrt{(x-c)^2+y^2} = 2a$ for $c = 4$, and $a = 5$}
\end{figure}

\end{example}

In this investigation we generalize loci (such as the ellipse), by understanding the norm of a vector as the equivalent to the length of a vector in $\mathbb{R}^2$

To begin with, we found an inequality relating vector addition between a vector in our loci and another vector in the vector space.

Given a function of the form:

\[
g(\left\| z - x_{1} \right \|, \dots, \left \| z-x_{n}\right\|) = \alpha_{1} \cdot \left \| z -x_{1}\right \| + \dots + \alpha_{n} \cdot \left \| z-x_{n} \right \| = c
\]

\begin{theorem}
Let $z \in L_{c}(x_{1}, \dots, x_{n})$ and $y \in V$. Then,

\begin{itemize}

\item[(1)] If

\[
c + (\sum_{i=1}^{n}\alpha_{i})\left \| y \right \| - \sum \alpha_{j} (\frac{\left \| z-x_{j} \right \|^{2} + \left \langle z-x_{j}, y \right \rangle}{\left \| z-x_{j} \right \|})  
\] 
\[
- \sum \alpha_{k} (\sqrt{\left \| y \right \|^{2} + \frac{(\left \| z-x_{k} \right \|^{2} + \left \langle z-x_{k}, y \right \rangle)^{2}}{\left \| z-x_{k} \right \|^{2}}}) < 0
\]
then:

\[
g(\left \| z+y-x_{1} \right \|, \dots, \left \| z+y-x_{n} \right \|) \geq 
c + g(\left \| y \right \|, \dots, \left \| y \right \|). 
\]

\item[(2)] If 

\[
c + (\sum_{i=1}^{n}\alpha_{i})\left \| y \right \| - \sum \alpha_{j} (\sqrt{\left \| y \right \|^{2} + \frac{(\left \| z-x_{k} \right \|^{2} + \left \langle z-x_{k}, y \right \rangle)^{2}}{\left \| z-x_{k} \right \|^{2}}})  
\] 
\[
- \sum \alpha_{k} (\frac{\left \| z-x_{j} \right \|^{2} + \left \langle z-x_{j}, y \right \rangle}{\left \| z-x_{j} \right \|})  > 0
\]
then:

\[
g(\left \| z+y-x_{1} \right \|, \dots, \left \| z+y-x_{n} \right \|) \leq 
c + g(\left \| y \right \|, \dots, \left \| y \right \|). 
\]

\item[(3)] If 

\[
(\sum_{i=1}^{n}\alpha_{i})\left \| y \right \| - c + \sum \alpha_{j} (\frac{\left \| z-x_{j} \right \|^{2} + \left \langle z-x_{j}, y \right \rangle}{\left \| z-x_{j} \right \|})  
\] 
\[
+ \sum \alpha_{k} (\sqrt{\left \| y \right \|^{2} + \frac{(\left \| z-x_{k} \right \|^{2} + \left \langle z-x_{k}, y \right \rangle)^{2}}{\left \| z-x_{k} \right \|^{2}}})  > 0
\]
then:

\[
g(\left \| z+y-x_{1} \right \|, \dots, \left \| z+y-x_{n} \right \|) \geq 
c - g(\left \| y \right \|, \dots, \left \| y \right \|). 
\]

\item[(4)] If \[
(\sum_{i=1}^{n}\alpha_{i})\left \| y \right \| - c + \sum \alpha_{j} (\sqrt{\left \| y \right \|^{2} + \frac{(\left \| z-x_{k} \right \|^{2} + \left \langle z-x_{k}, y \right \rangle)^{2}}{\left \| z-x_{k} \right \|^{2}}})  
\] 
\[
+ \sum \alpha_{k} (\frac{\left \| z-x_{j} \right \|^{2} + \left \langle z-x_{j}, y \right \rangle}{\left \| z-x_{j} \right \|})  < 0
\]
then:
\[
g(\left \| z+y-x_{1} \right \|, \dots, \left \| z+y-x_{n} \right \|) \leq 
c - g(\left \| y \right \|, \dots, \left \| y \right \|). 
\]

For all the statements above, all $\alpha_{j} > 0$ and all $\alpha_{k} < 0$

\end{itemize}

\end{theorem}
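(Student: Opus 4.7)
The plan is to derive a two-sided bound on each $\|z+y-x_i\|$ from Cauchy--Schwarz and the inner-product expansion, then linearly combine these bounds using the signs of the $\alpha_i$.

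First I would expand
\[
\|z+y-x_i\|^2 = \|z-x_i\|^2 + 2\langle z-x_i, y\rangle + \|y\|^2
\]
and set $\sigma_i := \|z-x_i\| + \langle z-x_i, y\rangle/\|z-x_i\|$, which is the first of the two scalar quantities appearing in the hypotheses. Squaring and comparing to the display above gives
\[
\|z+y-x_i\|^2 - \sigma_i^2 = \|y\|^2 - \frac{\langle z-x_i, y\rangle^2}{\|z-x_i\|^2}, \qquad (\sigma_i^2 + \|y\|^2) - \|z+y-x_i\|^2 = \frac{\langle z-x_i, y\rangle^2}{\|z-x_i\|^2}.
\]
The first right-hand side is nonnegative by Cauchy--Schwarz $|\langle z-x_i, y\rangle| \leq \|z-x_i\|\,\|y\|$, and the second is a square. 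Combined with the trivial $\sigma_i \leq |\sigma_i|$, this yields the key lemma
\[
\sigma_i \;\leq\; \|z+y-x_i\| \;\leq\; \sqrt{\sigma_i^2 + \|y\|^2},
\]
and these are exactly the two scalars that appear throughout the statement of the theorem.

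The four parts are then four sign-dependent linear combinations of these bounds. Setting $J = \{j : \alpha_j > 0\}$, $K = \{k : \alpha_k < 0\}$, and $h = \|y\|$, multiplying the lower bound by $\alpha_j > 0$ (preserving direction) and the upper bound by $\alpha_k < 0$ (flipping direction), and summing, gives the unconditional inequality
\[
g(\|z+y-x_1\|, \dots, \|z+y-x_n\|) \;\geq\; \sum_{j \in J}\alpha_j \sigma_j + \sum_{k \in K}\alpha_k \sqrt{\sigma_k^2 + h^2}.
\]
Since $g(\|y\|, \dots, \|y\|) = h \sum_i \alpha_i$, the hypothesis of (1) is precisely that the right-hand side exceeds $c + g(\|y\|, \dots, \|y\|)$, giving (1); the hypothesis of (3) is the same inequality with the weaker target $c - g(\|y\|, \dots, \|y\|)$ on the right, giving (3). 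Parts (2) and (4) are the mirror image: swapping $\sigma_j \leftrightarrow \sqrt{\sigma_j^2 + h^2}$ for $j \in J$ and $\sigma_k \leftrightarrow \sqrt{\sigma_k^2 + h^2}$ for $k \in K$ produces the corresponding upper bound on $g(\|z+y-x_1\|, \dots)$, which combined with the hypotheses of (2) and (4) yields those conclusions.

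The only real obstacle is bookkeeping: tracking the sign flips coming from the negative $\alpha_k$, and matching each of the four hypothesized scalar inequalities to the right conclusion while noting that the theorem uses the weaker bound $\sigma_i \leq \|z+y-x_i\|$ rather than the sharper $|\sigma_i| \leq \|z+y-x_i\|$. Once the two-sided lemma is established, each implication is a one-line algebraic rearrangement.
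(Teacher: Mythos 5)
Your proposal is correct and follows essentially the same route as the paper: the identical two-sided bound $\tfrac{\left\|z-x_i\right\|^{2}+\langle z-x_i,y\rangle}{\left\|z-x_i\right\|}\le\left\|z+y-x_i\right\|\le\sqrt{\left\|y\right\|^{2}+\tfrac{(\left\|z-x_i\right\|^{2}+\langle z-x_i,y\rangle)^{2}}{\left\|z-x_i\right\|^{2}}}$, applied term by term with the sign of each $\alpha_i$ deciding which end of the bound to use, and then compared with the hypothesized scalar inequality in each of the four cases. The only difference is cosmetic: you derive this bound directly from the expansion of $\left\|z+y-x_i\right\|^{2}$ and Cauchy--Schwarz, whereas the paper routes it through its Theorem \ref{thm:theorem18} (the sine-law length formula with $0\le\sin^{2}\theta\le 1$); your derivation is a clean shortcut but the argument is the same.
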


This theorem addresses the effects of adding a vector in the vector space to a vector in the loci, and how the resulting vector relates to the original loci. To see the proof for this theorem and the motivation behind the statement, please reference Theorem \ref{thm:theorem7}.

Similar to the statement of this theorem, we also were able to point down even further the properties of addition of two vectors in the loci and a linear combination of two vectors in the loci, with some restrictions on the coefficients that can be used for said linear combination.

This first theorem addresses the addition: 

\begin{theorem}
For any $v, w \in L_{c}(x_{1},\dots,x_{n})$, then

\begin{itemize}

\item[(1)] If

\[
2c - \sum \alpha_{j} (\frac{\left \| v-x_{j} \right \|^{2} + \left \langle z-x_{j}, w-x_{j} \right \rangle}{\left \| v-x_{j} \right \|})  
\] 
\[
- \sum \alpha_{k} (\sqrt{\left \| w-x_{k} \right \|^{2} + \frac{(\left \| v-x_{k} \right \|^{2} + \left \langle v-x_{k}, w-x_{k} \right \rangle)^{2}}{\left \| v-x_{k} \right \|^{2}}}) < 0, 
\] where 
 all $\alpha_{j} > 0$ and all $\alpha_{k} < 0$,
then:

\[
g(\left \| z+w-2x_{1} \right \|, \dots, \left \| v+w-2x_{n} \right \|) \geq 
2c. 
\]

\item[(2)] If 

\[
2c - \sum \alpha_{j} (\sqrt{\left \| w-x_{j} \right \|^{2} + \frac{(\left \| v-x_{j} \right \|^{2} + \left \langle v-x_{j}, w-x_{j} \right \rangle)^{2}}{\left \| v-x_{j} \right \|^{2}}})  
\] 
\[
- \sum \alpha_{k} (\frac{\left \| v-x_{j} \right \|^{2} + \left \langle v-x_{j}, w-x_{j} \right \rangle}{\left \| v-x_{j} \right \|})  > 0,
\] where 
 all $\alpha_{j} > 0$ and all $\alpha_{k} < 0$,
then:

\[
g(\left \| z+w-2x_{1} \right \|, \dots, \left \| v+w-2x_{n} \right \|) \leq 
2c.
\]

\item[(3)] If 

\[
 \sum \alpha_{j} (\frac{\left \| v-x_{j} \right \|^{2} + \left \langle v-x_{j}, w-x_{j} \right \rangle}{\left \| v-x_{j} \right \|})  
\] 
\[
+ \sum \alpha_{k} (\sqrt{\left \| w-x_{k} \right \|^{2} + \frac{(\left \| v-x_{k} \right \|^{2} + \left \langle v-x_{k}, w-x_{k} \right \rangle)^{2}}{\left \| v-x_{k} \right \|^{2}}})  > 0, 
\] where 
 all $\alpha_{j} > 0$ and all $\alpha_{k} < 0$,
then:

\[
g(\left \| z+w-2x_{1} \right \|, \dots, \left \| v+w-2x_{n} \right \|) \geq 
0.
\]

\item[(4)] If \[
\sum \alpha_{j} (\sqrt{\left \| w-x_{j} \right \|^{2} + \frac{(\left \| v-x_{j} \right \|^{2} + \left \langle v-x_{j}, w-x_{j} \right \rangle)^{2}}{\left \| v-x_{j} \right \|^{2}}})  
\] 
\[
+ \sum \alpha_{k} (\frac{\left \| v-x_{k} \right \|^{2} + \left \langle v-x_{k}, w-x_{k} \right \rangle}{\left \| v-x_{k} \right \|})  < 0,
\]where 
 all $\alpha_{j} > 0$ and all $\alpha_{k} < 0$,
 then:

\[
g(\left \| z+w-2x_{1} \right \|, \dots, \left \| v+w-2x_{n} \right \|) \geq 
0. 
\]

\end{itemize}

\end{theorem}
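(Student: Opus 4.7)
The plan is to reduce this statement directly to the previous theorem (referred to as Theorem \ref{thm:theorem7} in the excerpt), exploiting the fact that the bounds driving that proof are applied one index at a time, so one is free to use a different ``$y$'' at each focus. Inspecting the two statements side by side, every summand in the hypothesis has the same algebraic shape, except that the role of $y$ in Theorem 1 is played here by the index-dependent vector $w-x_{j}$. Under that substitution the quantity $z+y-x_{j}$ becomes $v+(w-x_{j})-x_{j}=v+w-2x_{j}$, which is exactly the norm appearing in the conclusion, and $\|y\|$ becomes $\|w-x_{j}\|$.

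The first concrete step is to isolate the per-index inequalities underlying Theorem 1. For any $v,u\in V$ with $v\neq x_{j}$, decompose $v+u-x_{j}$ into components parallel and perpendicular to $v-x_{j}$. A direct computation identifies the scalar parallel component as $\tfrac{\|v-x_{j}\|^{2}+\langle v-x_{j},u\rangle}{\|v-x_{j}\|}$, and shows that $\|v+u-x_{j}\|^{2}$ equals that quantity squared plus the squared norm of the perpendicular component of $u$. Bounding the latter below by $0$ and above by $\|u\|^{2}$ yields simultaneously $\tfrac{\|v-x_{j}\|^{2}+\langle v-x_{j},u\rangle}{\|v-x_{j}\|}\leq\|v+u-x_{j}\|\leq\sqrt{\|u\|^{2}+\tfrac{(\|v-x_{j}\|^{2}+\langle v-x_{j},u\rangle)^{2}}{\|v-x_{j}\|^{2}}}$, with equality only in degenerate configurations.

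The main body of the argument then specializes $u=w-x_{j}$ for each $j$, multiplies by $\alpha_{j}>0$ or by $\alpha_{k}<0$ (reversing the inequality in the negative case), and sums. The choice of which of the two per-index bounds to apply for the positively-weighted versus the negatively-weighted terms is precisely what distinguishes the four cases. Finally, one invokes $v,w\in L_{c}$, which gives $\sum\alpha_{i}\|v-x_{i}\|=\sum\alpha_{i}\|w-x_{i}\|=c$: in Theorem 1 the analogous quantity $(\sum\alpha_{i})\|y\|$ was free, whereas here it is forced to equal $c$, so the bound $c\pm g(\|y\|,\dots,\|y\|)$ collapses to $2c$ in cases (1)--(2) and to $0$ in cases (3)--(4).

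The hard part is purely bookkeeping, not analysis: for each of the four cases one must track the correct direction for each bound after scaling by $\alpha_{j}$ or $\alpha_{k}$ and verify that the combined inequality lines up with the sign of the assumed hypothesis. While working this out, one should also notice a few apparent typographical slips that need correcting before the reduction fits cleanly, namely the arguments $\|z+w-2x_{1}\|$ on the left-hand sides (which should be $\|v+w-2x_{1}\|$), the inner product $\langle z-x_{j},w-x_{j}\rangle$ in case (1) (which should be $\langle v-x_{j},w-x_{j}\rangle$), and the conclusion of case (4), which for internal consistency with its hypothesis and with case (4) of Theorem 1 should read $g(\|v+w-2x_{1}\|,\dots,\|v+w-2x_{n}\|)\leq 0$.
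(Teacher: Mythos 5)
Your proposal is correct and follows essentially the same route as the paper, which likewise proves this result by applying Theorem \ref{thm:theorem7} with $z=v$ and the index-dependent choice $y=w-x_{i}$, so that $\left\| y \right\|$ becomes $\left\| w-x_{i} \right\|$ and $w\in L_{c}(x_{1},\dots,x_{n})$ collapses the bounds $c\pm g(\left\| y \right\|,\dots,\left\| y \right\|)$ to $2c$ and $0$. Your parallel/perpendicular decomposition is just an equivalent restatement of the per-index bound the paper obtains from Theorem \ref{thm:theorem18}, and your flagged typographical corrections (including $\leq 0$ in case (4)) are consistent with the paper's intent.
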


This second theorem references linear combinations of two vectors:

\begin{theorem}

Let $v,w \in L_{c}(x_{1},\dots,x_{n})$ and $\beta, \gamma \in \mathbb{R}_{\geq 0}$. Then:

\begin{itemize}

\item[(1)] If

\[
(\gamma+\beta)c - \sum \alpha_{j} (\frac{\left \| \gamma(v-x_{j}) \right \|^{2} + \left \langle \gamma(v-x_{j}), \beta(w-x_{j}) \right \rangle}{\left \| \gamma(v-x_{j}) \right \|})  
\] 
\[
- \sum \alpha_{k} (\sqrt{\left \| \beta(w-x_{k}) \right \|^{2} + \frac{(\left \| \gamma(v-x_{k}) \right \|^{2} + \left \langle \gamma(v-x_{k}), \beta(w-x_{j}) \right \rangle)^{2}}{\left \|\gamma(v-x_{k}) \right \|^{2}}}) < 0,\]
where all $\alpha_{j} > 0$ and all $\alpha_{k} < 0$, then:

\[
g(\left \| \gamma v+\beta w-(\gamma +\beta)x_{1} \right \|, \dots, \left \| \gamma v+\beta w-(\gamma + \beta)x_{n} \right \|) \geq 
(\gamma + \beta)c. 
\]

\item[(2)] If 

\[
(\gamma + \beta)c - \sum \alpha_{j} (\sqrt{\left \| \beta(w-x_{j}) \right \|^{2} + \frac{(\left \| \gamma(v-x_{j}) \right \|^{2} + \left \langle \gamma(v-x_{j}), \beta(w-x_{j}) \right \rangle)^{2}}{\left \| \gamma(v-x_{j}) \right \|^{2}}})  
\] 
\[
- \sum \alpha_{k} (\frac{\left \| \gamma(v-x_{k}) \right \|^{2} + \left \langle \gamma(v-x_{k}), \beta(w-x_{k}) \right \rangle}{\left \| \gamma(v-x_{k}) \right \|})  > 0, \] where all $\alpha_{j} > 0$ and all $\alpha_{k} < 0$, then:

\[
g(\left \| \gamma v+\beta w-(\gamma +\beta)x_{1} \right \|, \dots, \left \| \gamma v+\beta w-(\gamma + \beta)x_{n} \right \|) \leq 
(\gamma + \beta)c. 
\]

\item[(3)] If 

\[
(\beta-\gamma)c + \sum \alpha_{j} (\frac{\left \| \gamma(v-x_{j}) \right \|^{2} + \left \langle \gamma(v-x_{j}), \beta(w-x_{j}) \right \rangle}{\left \| \gamma(v-x_{j}) \right \|})  
\] 
\[
+ \sum \alpha_{k} (\sqrt{\left \| \beta(w-x_{k}) \right \|^{2} + \frac{(\left \| \gamma(v-x_{k}) \right \|^{2} + \left \langle \gamma(v-x_{k}), \beta(w-x_{j}) \right \rangle)^{2}}{\left \| \gamma(v-x_{k}) \right \|^{2}}})  > 0,\]
where all $\alpha_{j} > 0$ and all $\alpha_{k} < 0$, then:

\[
g(\left \| \gamma v+\beta w-(\gamma +\beta)x_{1} \right \|, \dots, \left \| \gamma v+\beta w-(\gamma + \beta)x_{n} \right \|) \geq 
(\gamma - \beta)c.  
\]

\item[(4)] If \[
(\beta-\gamma)c + \sum \alpha_{j} (\sqrt{\left \| \beta(w-x_{j}) \right \|^{2} + \frac{(\left \| \gamma(v-x_{j}) \right \|^{2} + \left \langle \gamma(v-x_{j}), \beta(w-x_{j}) \right \rangle)^{2}}{\left \| \gamma(v-x_{j}) \right \|^{2}}})  
\] 
\[
+ \sum \alpha_{k} (\frac{\left \| \gamma(v-x_{k}) \right \|^{2} + \left \langle \gamma(v-x_{k}), \beta(w-x_{k}) \right \rangle}{\left \| \gamma(v-x_{k}) \right \|})  < 0, \]where 
 all $\alpha_{j} > 0$ and all $\alpha_{k} < 0$, then

\[g(\left \| \gamma v+\beta w-(\gamma +\beta)x_{1} \right \|, \dots, \left \| \gamma v+\beta w-(\gamma + \beta)x_{n} \right \|) \leq 
(\gamma - \beta)c. \]

\end{itemize}

\end{theorem}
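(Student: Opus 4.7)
The plan is to mirror the argument of the preceding theorem on the unscaled sum $v+w$, inserting the scalars $\gamma$ and $\beta$ throughout. The key algebraic observation is the identity
\[
\gamma v + \beta w - (\gamma+\beta)x_{i} \;=\; \gamma(v-x_{i}) + \beta(w-x_{i}),
\]
which lets us view each norm appearing in the conclusion as $\|a_{i}+b_{i}\|$ with $a_{i}:=\gamma(v-x_{i})$ and $b_{i}:=\beta(w-x_{i})$. Every quantity inside the four displayed hypotheses is then a function of $\|a_{i}\|$, $\|b_{i}\|$, and $\langle a_{i},b_{i}\rangle$.

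The single analytic ingredient I would establish first is the pair of pointwise bounds
\[
\frac{\|a\|^{2}+\langle a,b\rangle}{\|a\|} \;\leq\; \|a+b\| \;\leq\; \sqrt{\|b\|^{2}+\frac{(\|a\|^{2}+\langle a,b\rangle)^{2}}{\|a\|^{2}}},
\]
valid for any non-zero $a$ and any $b$ in $V$. Both come from expanding $\|a+b\|^{2}=\|a\|^{2}+2\langle a,b\rangle+\|b\|^{2}$ and squaring the candidate bounds; in each case the gap reduces to $\pm(\|b\|^{2}-\langle a,b\rangle^{2}/\|a\|^{2})$, which is non-negative by Cauchy--Schwarz (equivalently, it is the squared length of the component of $b$ orthogonal to $a$). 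Call these two expressions $P_{i}$ and $Q_{i}$ when evaluated at $(a_{i},b_{i})$; they are exactly the summands appearing in the four hypotheses.

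Next I would multiply each bound by $\alpha_{i}$, flipping the direction of the inequality on indices with $\alpha_{k}<0$, and sum. This gives the two master estimates
\[
\sum_{\alpha_{j}>0}\alpha_{j}P_{j}+\sum_{\alpha_{k}<0}\alpha_{k}Q_{k} \;\leq\; g(\|a_{1}+b_{1}\|,\dots,\|a_{n}+b_{n}\|) \;\leq\; \sum_{\alpha_{j}>0}\alpha_{j}Q_{j}+\sum_{\alpha_{k}<0}\alpha_{k}P_{k}.
\]
The assumption $v,w\in L_{c}(x_{1},\dots,x_{n})$ enters only through the scalar identities $\sum_{i}\alpha_{i}\|a_{i}\|=\gamma c$ and $\sum_{i}\alpha_{i}\|b_{i}\|=\beta c$, giving the thresholds $(\gamma+\beta)c$ and $(\gamma-\beta)c$. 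The hypothesis in (1) is then precisely the statement that the lower master bound strictly exceeds $(\gamma+\beta)c$, the hypothesis in (3) that it strictly exceeds the weaker threshold $(\gamma-\beta)c$; the hypotheses in (2) and (4) say symmetrically that the upper master bound lies strictly below $(\gamma+\beta)c$ and $(\gamma-\beta)c$, respectively. Each of the four conclusions drops out in one line, with the strict comparison weakening to the stated non-strict one.

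The main obstacle I anticipate is strictly administrative: the four statements differ only in the sign of one threshold and in whether the lower or upper master estimate is used, so I must verify carefully that, after rearrangement, each displayed hypothesis really is the comparison I claim it to be, and I must keep the partition into $\{j:\alpha_{j}>0\}$ and $\{k:\alpha_{k}<0\}$ straight throughout (especially because the four hypotheses swap the roles of $P$ and $Q$ across these two index sets). No new analytic ingredient beyond the Cauchy--Schwarz step is required; setting $\gamma=\beta=1$ recovers the preceding theorem verbatim, which is a useful internal sanity check.
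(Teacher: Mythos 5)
Your proposal is correct, and its core estimate is the same two-sided pointwise bound that drives the paper's machinery, but you package it as a direct, self-contained proof whereas the paper proves this theorem by reduction: it invokes the preceding addition theorem with $z=\gamma v$ and $y=\beta(w-x_{i})$, using linearity to note $\gamma v\in L_{\gamma c}(\gamma x_{1},\dots,\gamma x_{n})$, which in turn rests on the first theorem of the section where the bounds $\frac{\|a\|^{2}+\langle a,b\rangle}{\|a\|}\leq\|a+b\|\leq\sqrt{\|b\|^{2}+\frac{(\|a\|^{2}+\langle a,b\rangle)^{2}}{\|a\|^{2}}}$ are obtained from the sine-law identity $\|a+b\|^{2}=\|b\|^{2}\sin^{2}\theta+\frac{(\|a\|^{2}+\langle a,b\rangle)^{2}}{\|a\|^{2}}$ and $0\leq\sin^{2}\theta\leq1$. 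Your route buys two things: the chained reduction in the paper applies those earlier theorems with a ``$y$'' that varies with the index $i$, which their statements do not literally cover (the paper handles this only with a remark), while your per-index decomposition $\gamma v+\beta w-(\gamma+\beta)x_{i}=\gamma(v-x_{i})+\beta(w-x_{i})$ plus the master estimates avoids that awkwardness entirely; and you derive the pointwise bounds by bare Cauchy--Schwarz rather than via the trigonometric formula, which is more elementary. The paper's route is shorter on the page and highlights the scaling fact $\gamma v\in L_{\gamma c}(\gamma x_{1},\dots,\gamma x_{n})$. Two small points to fix in a full write-up: for the lower bound, squaring is only legitimate when $\|a\|^{2}+\langle a,b\rangle\geq0$ (otherwise the bound is trivially true, or argue directly from $\langle a,a+b\rangle\leq\|a\|\,\|a+b\|$); and the gap for the upper bound is $\langle a,b\rangle^{2}/\|a\|^{2}$, not $\|b\|^{2}-\langle a,b\rangle^{2}/\|a\|^{2}$ (only the lower-bound gap is the orthogonal-component term), though both gaps are nonnegative, so your conclusions stand. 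The identities $\sum_{i}\alpha_{i}\|\gamma(v-x_{i})\|=\gamma c$ and $\sum_{i}\alpha_{i}\|\beta(w-x_{i})\|=\beta c$ (which use $\gamma,\beta\geq0$) are, as you note, only motivational: each hypothesis already is the comparison of a master bound with its threshold, so each conclusion follows in one line.
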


Reference Theorems \ref{thm:theorem8} and \ref{thm:theorem10}, respectively, for the proofs.

Having built this basis upon which we can better understand the properties of loci in vector spaces, we used the fact that any vector space is isomorphic to $\mathbb{R}^n$ where $n$ is the dimension of $V$, in order to find an isomorphism between the loci and $\mathbb{R}^n$

\begin{theorem}
    Let $V$ be an $n$-dimensional vector space over $\mathbb{R}$ with inner product $\left \langle \cdot, \cdot \right \rangle \to \mathbb{R}$ and ordered basis $\beta$. Applying the isomorphism $\phi_\beta \colon V \to \mathbb{R}^n$ to the locus points $L_{c}\subseteq V$ yields the same result as finding the locus points of the equation on $\mathbb{R}^n$ with the inner product defined via the isomorphism $\phi_{\beta}$.
\end{theorem}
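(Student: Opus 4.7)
The plan is to exploit the fact that $\phi_\beta$ is a linear isomorphism and that the problem actually specifies the inner product on $\mathbb{R}^n$ to be the pushforward of $\langle \cdot, \cdot \rangle$ along $\phi_\beta$. Concretely, I would begin by defining $\langle a, b \rangle_{\mathbb{R}^n} := \langle \phi_\beta^{-1}(a), \phi_\beta^{-1}(b) \rangle$ for $a, b \in \mathbb{R}^n$, and verifying this is indeed an inner product (bilinearity, symmetry, and positive-definiteness all transport immediately because $\phi_\beta^{-1}$ is linear and bijective). This is the setup step and should be written out cleanly so the rest of the argument has nothing to check.

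Next I would establish the key one-line consequence: $\phi_\beta$ is an isometry, i.e.\ $\|\phi_\beta(v)\|_{\mathbb{R}^n} = \|v\|_V$ for all $v \in V$. This is immediate from the definition above, since $\|\phi_\beta(v)\|_{\mathbb{R}^n}^2 = \langle \phi_\beta(v), \phi_\beta(v)\rangle_{\mathbb{R}^n} = \langle v, v \rangle = \|v\|_V^2$. Combined with linearity, this gives $\|\phi_\beta(z) - \phi_\beta(x_i)\|_{\mathbb{R}^n} = \|\phi_\beta(z - x_i)\|_{\mathbb{R}^n} = \|z - x_i\|_V$ for every $i$.

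The main claim then follows by a direct chain of equivalences. For any $z \in V$,
\[
z \in L_c(x_1, \dots, x_n) \iff \sum_{i=1}^{n} \alpha_i \|z - x_i\|_V = c \iff \sum_{i=1}^{n} \alpha_i \|\phi_\beta(z) - \phi_\beta(x_i)\|_{\mathbb{R}^n} = c,
\]
which is exactly the condition $\phi_\beta(z) \in L_c(\phi_\beta(x_1), \dots, \phi_\beta(x_n))$ computed in $\mathbb{R}^n$ with the pushed-forward inner product. Since $\phi_\beta$ is a bijection, this yields the set equality $\phi_\beta(L_c(x_1,\dots,x_n)) = L_c(\phi_\beta(x_1),\dots,\phi_\beta(x_n))$, which is the content of the theorem.

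I do not expect any real obstacle: everything rests on the observation that a linear isomorphism between inner product spaces that carries one inner product to another automatically preserves norms, and the locus condition is defined purely in terms of norms of differences. The only subtle point worth flagging in the write-up is that the inner product on $\mathbb{R}^n$ is \emph{not} assumed to be the standard dot product; it is the one induced by $\phi_\beta$, and this is essential, since otherwise the two loci generally differ (an ellipse in a non-orthonormal basis looks like a tilted/stretched ellipse in coordinates only if we measure length with the correct Gram matrix). I would call this distinction out explicitly to avoid the reader conflating the two inner products on $\mathbb{R}^n$.
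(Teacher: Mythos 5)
Your proposal is correct and follows essentially the same route as the paper: both define the inner product on $\mathbb{R}^n$ as the one induced by $\phi_\beta$ (your abstract pushforward $\langle a,b\rangle_{\mathbb{R}^n}=\langle\phi_\beta^{-1}(a),\phi_\beta^{-1}(b)\rangle_V$ coincides with the paper's coordinate formula $\sum_{i,j}a_ib_j\langle s_i,s_j\rangle_V$), observe that norms of differences are therefore preserved, and translate the defining equation of $L_c$ across the isomorphism. The only difference is presentational: the paper verifies the two inclusions separately by explicit double-sum coordinate expansions, whereas you obtain both at once from the isometry property, linearity, and bijectivity of $\phi_\beta$, which is a cleaner write-up of the same argument.
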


The theorem above (see the proof in Theorem \ref{thm:theorem12}) states that applying the isomorphism to the loci in $V$ onto $\mathbb{R}^n$ yields the same result as finding the locus points of the equation on $\mathbb{R}^n$ with the inner product defined via the isomorphism.

\pagebreak

\section{Considerations and Definitions}
\label{sec:section2}

 \emph{An inner product on a vector space $ V  $ over $F$}  is a function that assigns to every ordered pair of vectors $ x, y \in V $, a scalar in $ F $, denoted as $ \left \langle x, y \right \rangle $, such that for all $ x, y, z \in V $ and all $ c \in F $ the following hold:

\begin{itemize}
\item $\left \langle x + z , y \right \rangle = \left \langle x, y \right \rangle + \left \langle z, y \right \rangle$
\item $\left \langle cx , y \right \rangle = c \cdot \left \langle x, y \right \rangle$
\item $\overline{\left \langle x + z , y \right \rangle} = \left \langle y, x \right \rangle$, where the $\bar{}$ denotes complex conjugation.
\item If $x \neq 0$, then $\left \langle x , x \right \rangle $  is a positive real number.
\end{itemize}

As such, locus or loci are studied within the field of geometry due to their undeniable applicability in this area. Examples in plane geometry of locus include amongst others:

\begin{itemize}
\item Set of points equidistant from two other points (Perpendicular Bisector)
\item Set of points at a constant distance from a fixed point (Circle)
\item Set of points for which the sum between two focal points is constant (Ellipse)
\end{itemize}

Though the concept may seem limited to geometry, loci can be easily applied to our real world, in particular in the study of movements of masses.

\emph{Real inner product spaces} are vector spaces with a defined operation, called inner product, which maps any ordered pair of vectors in the space to the field of real numbers $\mathbb{R}$.

The locus points that this investigation is centered around are the locus points that can be defined through a linear combination of the norms from the loci to a set of focus points. For the purposes of this investigation, we will understand generalized locus points in real inner product spaces as the set of all vectors $x \in V$ that satisfy the property that for a given function $g$ that is a linear combination of its variables and a constant $c$
\[
g(\left \|x-x_{1}\right \| ,\left \|x-x_{2}\right \|, \dots, \left \|x-x_{n}\right \|) =
\]
\[
\alpha_{1} \cdot \left \|x-x_{1}\right \| +  \dots + \alpha_{n} \cdot \left \|x-x_{n}\right \|
\], and

\[
g(\left \|x-x_{1}\right \| ,\left \|x-x_{2}\right \|, \dots, \left \|x-x_{n}\right \|) = c
\]

where $x_{1}, x_{2}, \dots, x_{n} \in V$ are the $n$ focus points of the locus we are interested in, and $\alpha_{1}, \dots, \alpha_{n} \in \mathbb{R}$.

At this stage, we define $L_{c}(x_{1}, \dots, x_{n})$ to be the following subset of $V$:

\begin{displaymath}
    L_{c}(x_{1}, \dots, x_{n}) = \left \{ x\in V \mid g(\left \|x-x_{1}\right \|, \dots, \left \|x-x_{n}\right \|) = c, x \in V, c \in \mathbb{R} \right \}
\end{displaymath}

In the vector space $\mathbb{R}^2$ with the Euclidean metric defined as:

\[
\left \langle (a_{1}, a_{2}),(b_{1},b_{2}) \right \rangle = a_{1} \cdot b_{1} + a_{2} \cdot b_{2} 
\]

Thus, we can define loci with our definition. Perhaps some well known loci in $\mathbb{R}^2$ are the ellipse and the hyperbola, defined as:

\[
g(\left \| x - x_{1} \right \|, \left \| x - x_{2} \right \|) =\left \| x - x_{1} \right \| + \left \| x - x_{2} \right \| = 2\cdot a
\]

\[
g(\left \| x - x_{1} \right \|, \left \| x - x_{2} \right \|) =\left \| x - x_{1} \right \| - \left \| x - x_{2} \right \| = 2\cdot a,
\]

respectively, where $x, x_{1}, x_{2} \in \mathbb{R}^2$ and $a\in \mathbb{R}$

\pagebreak
\subsection{Triangle Geometry in Vector Spaces}

Before we explore some properties of loci in the vector space and the subset $L_{c}(x_{1},\dots,x_{n}) \subseteq V$, we might find it helpful to develop some concepts of triangle geometry in general vector spaces with associated real inner products.

\begin{theorem}
\label{thm:theorem16}
   For any two vectors $x, y \in V$, where $V$ is a real inner product space, then the following is true:
   \[
   \left \| x + y \right \|^{2} = \left \| x \right \|^{2} + \left \| y \right \|^{2} + 2 \left \| x \right \| \cdot \left \| y \right \| \cdot \cos(\theta)
   \]
\end{theorem}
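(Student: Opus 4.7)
The plan is to unfold the squared norm on the left-hand side via the inner product, use bilinearity and symmetry of $\langle \cdot, \cdot \rangle$ (recall that the inner product is real-valued in this setting, so conjugation is trivial), and then recognize the mixed term as the quantity that defines the angle $\theta$ between $x$ and $y$.

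Concretely, I would first write $\|x+y\|^2 = \langle x+y, x+y\rangle$ and expand this into the four summands $\langle x,x\rangle + \langle x,y\rangle + \langle y,x\rangle + \langle y,y\rangle$ using additivity in the first argument and, after invoking symmetry of the real inner product, in the second. Collapsing $\langle x,x\rangle$ and $\langle y,y\rangle$ to $\|x\|^2$ and $\|y\|^2$, and combining $\langle x,y\rangle+\langle y,x\rangle = 2\langle x,y\rangle$, this reduces the claim to showing
\[
2\langle x, y \rangle = 2 \|x\| \cdot \|y\| \cdot \cos(\theta).
\]

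The remaining step is to invoke the definition of the angle $\theta$ between two nonzero vectors in a real inner product space, namely
\[
\cos(\theta) = \frac{\langle x, y\rangle}{\|x\|\cdot\|y\|},
\]
which is well-defined precisely because the Cauchy--Schwarz inequality forces the right-hand side to lie in $[-1,1]$. Substituting this identity completes the equality. I would also note the degenerate case in which $x=0$ or $y=0$: both sides of the target identity reduce to $\|x\|^2 + \|y\|^2$ regardless of the (conventionally chosen) value of $\theta$, so the formula still holds.

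The only real obstacle is making the notion of $\theta$ rigorous in an abstract real inner product space, since unlike in $\mathbb{R}^2$ there is no ambient geometric angle to point to; this is overcome by \emph{defining} $\theta$ through the Cauchy--Schwarz-validated formula above. Once this definition is in place, the proof is essentially a one-line expansion of $\langle x+y,x+y\rangle$.
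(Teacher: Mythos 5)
Your proposal is correct and follows essentially the same route as the paper: expand $\left\| x+y \right\|^{2} = \left\langle x+y, x+y \right\rangle$ by bilinearity and symmetry, then substitute the definition $\cos(\theta) = \frac{\left\langle x, y \right\rangle}{\left\| x \right\| \cdot \left\| y \right\|}$ to identify the cross term. Your added remarks on Cauchy--Schwarz making $\theta$ well-defined and on the degenerate case $x=0$ or $y=0$ are sensible refinements but do not change the argument.
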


\begin{proof}
    \begin{figure}[h!]
\centering
\includegraphics[width=0.75\textwidth]{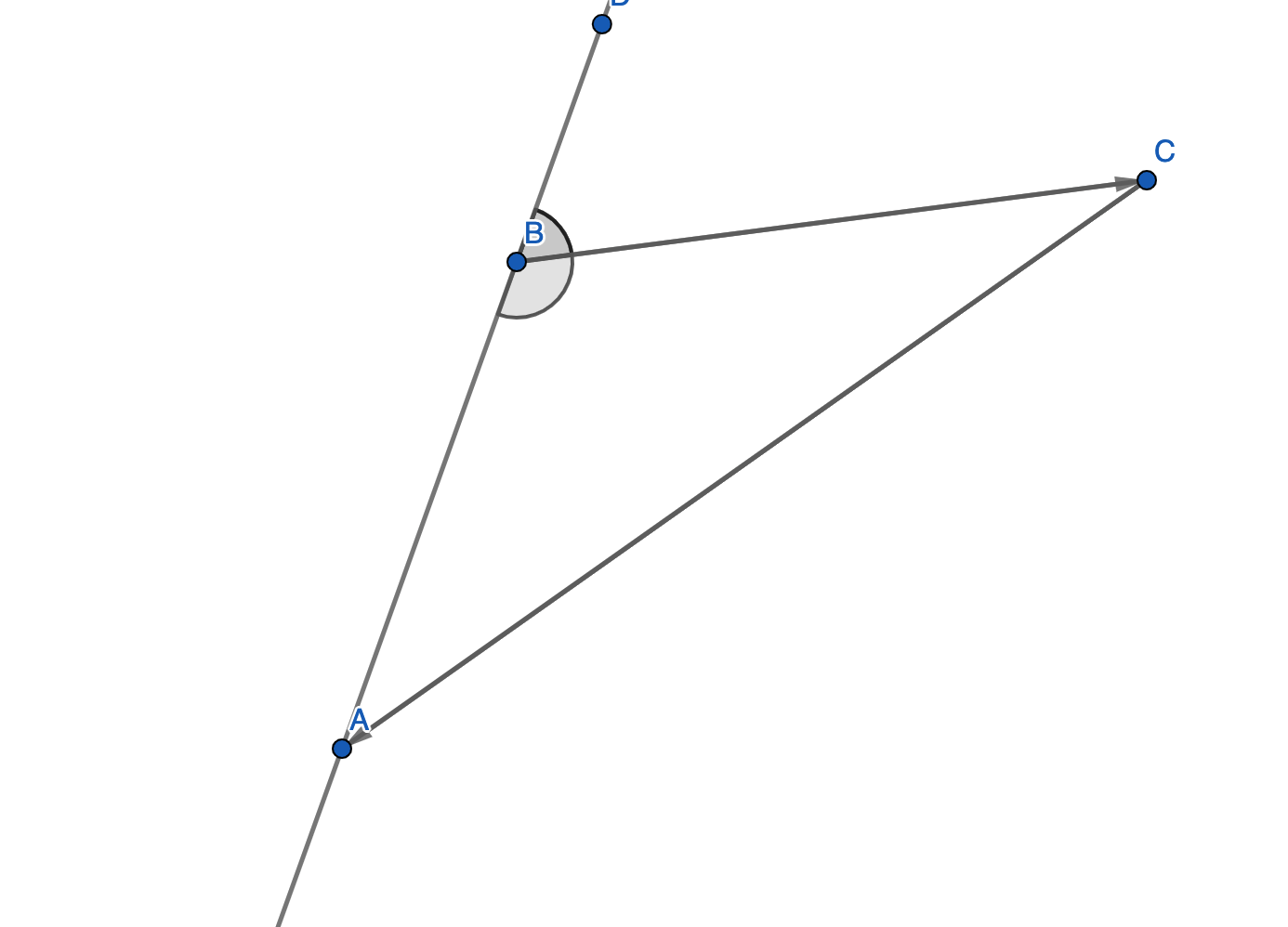}
\caption{\label{fig:graph43}Representation of a triangle with the vectors $x = \vec{AB}, y = \vec{BC}, x+y = \vec{AC} \in V$ }
\end{figure}

We will start off by realizing, as before, that:

\[
\left \| x + y \right \|^{2} = \left \langle x +y, x+y \right \rangle = \left \langle x, x \right \rangle + 2\left \langle x, y \right \rangle + \left \langle y,y \right \rangle =
\]
\[
\left \| x \right \|^{2} + \left \| y \right \|^{2} + 2\left \langle x, y\right \rangle
\]

Recall the definition of $\cos(\theta)$ in real inner product spaces:

\[
\cos(\theta) = \frac{\left \langle x, y \right \rangle}{\left \| x \right \| \cdot \left \| y \right \|}. 
\]

We realize, then, that

\[
\left \| x \right \| \cdot \left \| y \right \| \cdot \cos(\theta) = \left \langle x, y \right \rangle
\]

This can be substituted into our equation, and we obtain that:

\[
\left \| x + y \right \|^{2} =
\left \| x \right \|^{2} + \left \| y \right \|^{2} + 2\left \| x \right \| \cdot \left \| y \right \| \cdot \cos(\theta)
\]

\end{proof}

\begin{theorem}
    \label{thm:theorem17}
    For any real inner product vector space and a triangle with sides $x, y, x+y \in V$, the following holds true:

\[
\frac{\left \|x \right \|}{\sin(\arccos{\frac{\left \langle y, x+ y \right \rangle}{\left \|y \right \| \cdot \left \|x+y \right \|}})} = \frac{\left \|y \right \|}{\sin(\arccos{\frac{\left \langle x, x+ y \right \rangle}{\left \|x \right \| \cdot \left \|x+y \right \|}})} = \frac{\left \|x+y \right \|}{\sin(\arccos{\frac{\left \langle x, y \right \rangle}{\left \|x \right \| \cdot \left \|y \right \|}})} 
\]
    
\end{theorem}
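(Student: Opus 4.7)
The strategy is to reduce all three ratios to a single common expression by converting each $\sin(\arccos(\cdot))$ into an algebraic form and then showing the three denominators are in fact equal. Since inner-product angles live in $[0,\pi]$, where sine is non-negative, I would first use $\sin(\arccos t) = \sqrt{1-t^{2}}$ to rewrite each denominator. After cross-multiplying numerator and denominator by the two norms in the cosine's denominator, each of the three ratios takes the symmetric form
\[
\frac{\|u\|\cdot\|v\|\cdot\|w\|}{\sqrt{\|v\|^{2}\|w\|^{2}-\langle v,w\rangle^{2}}},
\]
where $u$ is the side in the original numerator and $v,w$ are the two sides whose inner product defined the angle. All three cases therefore share the numerator $\|x\|\cdot\|y\|\cdot\|x+y\|$, so the theorem reduces to proving the three radicands are equal.

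Next I would show that each of the three expressions
\[
\|y\|^{2}\|x+y\|^{2}-\langle y,x+y\rangle^{2},\qquad \|x\|^{2}\|x+y\|^{2}-\langle x,x+y\rangle^{2},\qquad \|x\|^{2}\|y\|^{2}-\langle x,y\rangle^{2}
\]
collapses to the last one. For the first, I would expand $\|x+y\|^{2}$ using Theorem \ref{thm:theorem16} and use bilinearity of the inner product to write $\langle y,x+y\rangle = \langle x,y\rangle+\|y\|^{2}$; after squaring and subtracting, the $\|y\|^{4}$ and mixed terms cancel, leaving $\|x\|^{2}\|y\|^{2}-\langle x,y\rangle^{2}$. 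The same cancellation (by symmetry in $x$ and $y$) works for the second expression. This is essentially the Lagrange identity applied in the inner product setting, and it is the one place where the computation must be carried out carefully.

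Once the three radicands are verified equal, the three ratios in the theorem all equal the common value
\[
\frac{\|x\|\cdot\|y\|\cdot\|x+y\|}{\sqrt{\|x\|^{2}\|y\|^{2}-\langle x,y\rangle^{2}}},
\]
which is well-defined whenever $x$ and $y$ are linearly independent (i.e.\ a genuine triangle exists), by Cauchy--Schwarz. The main obstacle is purely bookkeeping: the statement is nothing more than the classical law of sines transported to a real inner product space, and the conceptual content is captured entirely by $\sin^{2}+\cos^{2}=1$ together with Theorem \ref{thm:theorem16}. I would make sure to remark that the degenerate case $\langle x,y\rangle^{2}=\|x\|^{2}\|y\|^{2}$ corresponds to collinear vectors, in which case the triangle degenerates and the statement is vacuous.
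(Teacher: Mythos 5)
Your proposal is correct and follows essentially the same route as the paper's own proof: convert each $\sin(\arccos(\cdot))$ to $\sqrt{1-t^{2}}$, clear denominators so every ratio has numerator $\left\|x\right\|\cdot\left\|y\right\|\cdot\left\|x+y\right\|$, and reduce the radicands to $\left\|x\right\|^{2}\left\|y\right\|^{2}-\left\langle x,y\right\rangle^{2}$ by expanding $\left\|x+y\right\|^{2}$ and $\left\langle x,x+y\right\rangle$ via bilinearity (the paper invokes its Cosine Law, Theorem \ref{thm:theorem16}, for the same cancellation). Your added remark on the degenerate collinear case, where Cauchy--Schwarz gives equality and the denominators vanish, is a small but worthwhile precision the paper omits.
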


\begin{proof}
    One must note that the angles $\arccos{\frac{\left \langle y, x+ y \right \rangle}{\left \|y \right \| \cdot \left \|x+y \right \|}} $, $\arccos{\frac{\left \langle x, x+ y \right \rangle}{\left \|x \right \| \cdot \left \|x+y \right \|}}$,
    $\arccos{\frac{\left \langle x, y \right \rangle}{\left \|x \right \| \cdot \left \|y \right \|}} $ are not the actual angles that are opposite to the sides $\left \|x \right \| $, $\left \|y \right \|$, and $\left \|x +y\right \|$, respectively, but rather their supplementary angles. However, $\sin(\theta) = \sin(\pi - \theta)$, so this does not affect our theorem.

    Firstly, we will calculate the values of $\sin(\theta)$ all three angles in the triangle. Of course, if $\cos(\theta) = \frac{\left \langle x, y \right \rangle}{\left \| x\right \| \cdot \left \| y\right \|}$, then we can apply the Pythagorean Theorem and obtain:

    \[
    \sin(\theta) = \sqrt{1-\frac{\left \langle x, y \right \rangle^{2}}{\left \| x\right \|^{2} \cdot \left \| y\right \|^{2}}}
    \]

    Knowing this, we will firstly prove the equality:

    \[
    \frac{\left \|y \right \|}{\sin(\arccos{\frac{\left \langle x, x+ y \right \rangle}{\left \|x \right \| \cdot \left \|x+y \right \|}})} = \frac{\left \|x+y \right \|}{\sin(\arccos{\frac{\left \langle x, y \right \rangle}{\left \|x \right \| \cdot \left \|y \right \|}})} 
    \]

We realize that:

\[
\frac{\left \|x+y \right \|}{\sin(\arccos{\frac{\left \langle x, y \right \rangle}{\left \|x \right \| \cdot \left \|y \right \|}})} = \frac{\left \|x+y \right \|}{\sqrt{1-\frac{\left \langle x, y \right \rangle^{2}}{\left \| x\right \|^{2} \cdot \left \| y\right \|^{2}}}} = 
\]
\[
\frac{\left \|x+y \right \|\left \|x \right \| \left \|y \right \|}{\sqrt{\left \|x \right \|^{2} \left \|y \right \|^{2}-\left \langle x, y \right \rangle ^{2}}} 
\]

We can simply the other term similarly, that is:

\[
 \frac{\left \|y \right \|}{\sin(\arccos{\frac{\left \langle x, x+ y \right \rangle}{\left \|x \right \| \cdot \left \|x+y \right \|}})} = \frac{\left \|y \right \|}{\sqrt{1-\frac{\left \langle x, x+y \right \rangle^{2}}{\left \| x\right \|^{2} \cdot \left \| x+y\right \|^{2}}}} = 
\]
\[
\frac{\left \|x+y \right \|\left \|x \right \| \left \|y \right \|}{\sqrt{\left \|x \right \|^{2} \left \|x+y \right \|^{2}-(\left \langle x, x \right \rangle + \left \langle x, y \right \rangle) ^{2}}} =
\]
\[
\frac{\left \|x+y \right \|\left \|x \right \| \left \|y \right \|}{\sqrt{\left \|x \right \|^{2} \left \|x+y \right \|^{2}-\left \| x \right \|^{2}\left \| x \right \|^{2} -2 \left \| x\right \|^{2}\left \langle x, y \right \rangle - \left \langle x, y \right \rangle^{2}}} = 
\]
\[
\frac{\left \|x+y \right \|\left \|x \right \| \left \|y \right \|}{\sqrt{\left \|x \right \|^{2} (\left \|x+y \right \|^{2}-\left \| x \right \|^{2} -2\left \langle x, y \right \rangle) - \left \langle x, y \right \rangle^{2}}}
\]

Using the Cosine Law and properties we derived above, we have that:

\[
\left \|x+y \right \|^{2}-\left \| x \right \|^{2} -2\left \langle x, y \right \rangle = \left \| y \right \|^{2}
\]

Hence, the equation above is reduced to:
\[
\frac{\left \|x+y \right \|\left \|x \right \| \left \|y \right \|}{\sqrt{\left \|x \right \|^{2} \left \|y \right \| - \left \langle x, y \right \rangle^{2}}}
\]

Since both terms are equal, it means that

\[
    \frac{\left \|y \right \|}{\sin(\arccos{\frac{\left \langle x, x+ y \right \rangle}{\left \|x \right \| \cdot \left \|x+y \right \|}})} = \frac{\left \|x+y \right \|}{\sin(\arccos{\frac{\left \langle x, y \right \rangle}{\left \|x \right \| \cdot \left \|y \right \|}})} 
\]

The proof for the third equality is similar to this one. Hence, we have that:

\[
\frac{\left \|x \right \|}{\sin(\arccos{\frac{\left \langle y, x+ y \right \rangle}{\left \|y \right \| \cdot \left \|x+y \right \|}})} = \frac{\left \|y \right \|}{\sin(\arccos{\frac{\left \langle x, x+ y \right \rangle}{\left \|x \right \| \cdot \left \|x+y \right \|}})} = \frac{\left \|x+y \right \|}{\sin(\arccos{\frac{\left \langle x, y \right \rangle}{\left \|x \right \| \cdot \left \|y \right \|}})} 
\]

\end{proof}

We have now proved the Cosine and Sine Laws for any general real inner product space. Lastly, we will prove one last theorem, which will come in handy in the next subsections. This theorem will help us calculate the length of the vector $x+y$ without the need of directly applying either the Cosine or Sine Law.

\begin{theorem}
\label{thm:theorem18}
    The length of the vector $x+y$ is given by:

    \[
    \left \| x + y \right \|^{2} = \left \| y \right \|^{2} \cdot \sin^{2}(\arccos \frac{\left \langle x, y \right \rangle}{ \left \| x \right \| \left \| y \right \|}) + \frac{(\left \|x \right \|^{2} + \left \langle x, y \right \rangle)^{2}}{\left \| x \right \|^{2}}
    \]
\end{theorem}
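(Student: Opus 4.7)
The plan is to recognize the stated identity as the Pythagorean decomposition of $x+y$ into the component parallel to $x$ and the component perpendicular to $x$, and then verify it by direct algebraic computation using Theorem~\ref{thm:theorem16} (Law of Cosines). Geometrically, the term $\frac{(\|x\|^{2} + \langle x, y\rangle)^{2}}{\|x\|^{2}}$ is the squared length of the projection of $x+y$ onto $x$, since $\langle x+y, x\rangle = \|x\|^{2} + \langle x, y\rangle$, while $\|y\|^{2}\sin^{2}(\theta)$ where $\theta$ is the angle between $x$ and $y$ is the squared length of the component of $y$ (and hence of $x+y$) perpendicular to $x$. I expect no essential geometric obstacle; the work is purely algebraic.

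First I would simplify the first term on the right-hand side using the identity $\sin^{2}(\arccos t) = 1 - t^{2}$ (already noted in the proof of Theorem~\ref{thm:theorem17}) applied with $t = \frac{\langle x, y\rangle}{\|x\|\,\|y\|}$, yielding
\[
\|y\|^{2}\sin^{2}\!\left(\arccos\frac{\langle x, y\rangle}{\|x\|\,\|y\|}\right) = \|y\|^{2} - \frac{\langle x, y\rangle^{2}}{\|x\|^{2}}.
\]
Next I would expand the second term by squaring the numerator:
\[
\frac{(\|x\|^{2} + \langle x, y\rangle)^{2}}{\|x\|^{2}} = \|x\|^{2} + 2\langle x, y\rangle + \frac{\langle x, y\rangle^{2}}{\|x\|^{2}}.
\]

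Adding these two expressions, the $\pm\frac{\langle x, y\rangle^{2}}{\|x\|^{2}}$ terms cancel and I am left with $\|x\|^{2} + \|y\|^{2} + 2\langle x, y\rangle$. Finally I would invoke Theorem~\ref{thm:theorem16}, which gives exactly $\|x+y\|^{2} = \|x\|^{2} + \|y\|^{2} + 2\|x\|\|y\|\cos(\theta) = \|x\|^{2} + \|y\|^{2} + 2\langle x, y\rangle$, completing the identification. The only minor subtlety worth flagging is that the formula implicitly assumes $x \neq 0$ (so the denominators make sense and $\theta$ is well-defined); the $x=0$ case is trivial since both sides reduce to $\|y\|^{2}$ after a limiting convention, but it should be stated as a hypothesis to avoid ambiguity.
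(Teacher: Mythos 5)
Your proof is correct, but it runs in a different direction from the paper's. You take the right-hand side and collapse it directly: using $\sin^{2}(\arccos t)=1-t^{2}$ you rewrite the first term as $\left\|y\right\|^{2}-\frac{\left\langle x,y\right\rangle^{2}}{\left\|x\right\|^{2}}$, expand the projection term as $\left\|x\right\|^{2}+2\left\langle x,y\right\rangle+\frac{\left\langle x,y\right\rangle^{2}}{\left\|x\right\|^{2}}$, and observe the sum is $\left\|x\right\|^{2}+\left\|y\right\|^{2}+2\left\langle x,y\right\rangle=\left\|x+y\right\|^{2}$ by Theorem~\ref{thm:theorem16}. The paper instead builds forward from the Sine Law (Theorem~\ref{thm:theorem17}): it cross-multiplies the ratio $\frac{\left\|y\right\|}{\sin(\arccos\frac{\left\langle x,x+y\right\rangle}{\left\|x\right\|\left\|x+y\right\|})}=\frac{\left\|x+y\right\|}{\sin(\arccos\frac{\left\langle x,y\right\rangle}{\left\|x\right\|\left\|y\right\|})}$, squares, adds $\left\|x+y\right\|^{2}\cos^{2}(\arccos\frac{\left\langle x,x+y\right\rangle}{\left\|x\right\|\left\|x+y\right\|})$ to both sides via the Pythagorean identity, and then computes that cosine explicitly as $\frac{\left\|x\right\|^{2}+\left\langle x,y\right\rangle}{\left\|x\right\|\left\|x+y\right\|}$. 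Your route is shorter and more elementary: it bypasses Theorem~\ref{thm:theorem17} entirely, needs only the expansion of $\left\langle x+y,x+y\right\rangle$ and one trigonometric identity, and it makes the geometric content (parallel/perpendicular decomposition relative to $x$) transparent. What the paper's argument buys is narrative continuity — it showcases the Sine Law it has just established as part of its triangle-geometry toolkit. Your remark that $x\neq 0$ must be assumed (and implicitly $y\neq 0$ for the arccosine to be defined) is a genuine improvement, since the paper never states this hypothesis.
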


\begin{proof}
    We will firstly use the Sine Law that we just proved in order to realize that:

    \[
    \frac{\left \|y \right \|}{\sin(\arccos{\frac{\left \langle x, x+ y \right \rangle}{\left \|x \right \| \cdot \left \|x+y \right \|}})} = \frac{\left \|x+y \right \|}{\sin(\arccos{\frac{\left \langle x, y \right \rangle}{\left \|x \right \| \cdot \left \|y \right \|}})} 
    \]

    We can get rid of the fraction by multiplying both sides by the denominator of the other. In doing so, we obtain:
    \[
\left \|x+y \right \|\sin(\arccos{\frac{\left \langle x, x+ y \right \rangle}{\left \|x \right \| \cdot \left \|x+y \right \|}}) =     \left \|y \right \|\sin(\arccos{\frac{\left \langle x, y \right \rangle}{\left \|x \right \| \cdot \left \|y \right \|}})
    \]

We can square both sides to obtain:

    \[
\left \|x+y \right \|^{2}\sin^{2}(\arccos{\frac{\left \langle x, x+ y \right \rangle}{\left \|x \right \| \cdot \left \|x+y \right \|}}) =     \left \|y \right \|^{2}\sin^{2}(\arccos{\frac{\left \langle x, y \right \rangle}{\left \|x \right \| \cdot \left \|y \right \|}})
    \]

If we now add the term $ \left \|x+y \right \|^{2}\cos^{2}(\arccos{\frac{\left \langle x, x+ y \right \rangle}{\left \|x \right \| \cdot \left \|x+y \right \|}}) $ to both sides, then we obtain, by the Pythagorean Theorem:

    \[
\left \|x+y \right \|^{2} =     \left \|y \right \|^{2}\sin^{2}(\arccos{\frac{\left \langle x, y \right \rangle}{\left \|x \right \| \cdot \left \|y \right \|}}) + \left \|x+y \right \|^{2}\cos^{2}(\arccos{\frac{\left \langle x, x+ y \right \rangle}{\left \|x \right \| \cdot \left \|x+y \right \|}})
    \]

But, of course:

\[
\cos^{2}(\arccos{\frac{\left \langle x, x+ y \right \rangle}{\left \|x \right \| \cdot \left \|x+y \right \|}}) = (\cos(\arccos{\frac{\left \langle x, x+ y \right \rangle}{\left \|x \right \| \cdot \left \|x+y \right \|}}))^{2} =
\]
\[
(\frac{\left \langle x, x+ y \right \rangle}{\left \|x \right \| \cdot \left \|x+y \right \|})^{2} = (\frac{\left \|x \right \|^{2} + \left \langle x, y \right \rangle}{\left \|x \right \| \cdot \left \|x+y \right \|})^{2} = \frac{(\left \|x \right \|^{2} + \left \langle x, y \right \rangle)^{2}}{\left \|x \right \|^{2} \cdot \left \|x+y \right \|^{2}}
\]

This means that:

    \[
\left \|x+y \right \|^{2} =     \left \|y \right \|^{2}\sin^{2}(\arccos{\frac{\left \langle x, y \right \rangle}{\left \|x \right \| \cdot \left \|y \right \|}}) + \left \|x+y \right \|^{2}\cdot \frac{(\left \|x \right \|^{2} + \left \langle x, y \right \rangle)^{2}}{\left \|x \right \|^{2} \cdot \left \|x+y \right \|^{2}} = 
    \]

    \[
 \left \|y \right \|^{2}\sin^{2}(\arccos{\frac{\left \langle x, y \right \rangle}{\left \|x \right \| \cdot \left \|y \right \|}}) +  \frac{(\left \|x \right \|^{2} + \left \langle x, y \right \rangle)^{2}}{\left \|x \right \|^{2} }   
    \]

\end{proof}

\pagebreak

\section{General Vector Spaces}

Now that we have in mind a clear definition of what locus points over real inner product spaces are, we can now move on to investigating some of the properties that these type of locus points have.

\subsection{Subset \begin{math} L_{c}(x_{1},\dots,x_{n}) \subseteq V \end{math}}

Now that we have cleared some technicalities and understand the definitions, let us work in a general vector space $V$, rather than in the specific case there $V= \mathbb{R}^n$. 

Recall that

\[
    L_{c}(x_{1}, \dots, x_{n}) = \]
    \[\left \{ x\in V \mid g(\sqrt{\left \langle x-x_{1}, x-x_{1} \right \rangle}, \dots, \sqrt{\left \langle x-x_{n}, x-x_{n} \right \rangle}) = c, c \in \mathbb{R} \right \}.
\]

At this point, we might be rightfully wondering what properties do the vectors in $L_{c}(x_{1}, \dots, x_{n})$ have. For this endeavor, recall the general Cauchy-Schwarz inequality:

\[
\left \langle x, y \right \rangle \leq \sqrt{\left \langle x, x \right \rangle}\sqrt{\left \langle y, y \right \rangle}, \quad \text{where} \quad x,y\in V.
\]

Let $z \in L_{c}(x_{1}, \dots, x_{n})$. This means that

\[
g(\sqrt{\left \langle z-x_{1}, z-x_{1} \right \rangle}, \sqrt{\left \langle z-x_{2}, z-x_{2} \right \rangle}, \dots, \sqrt{\left \langle z-x_{n}, z-x_{n} \right \rangle}) = c
\]

Unfortunately, the subset $L_{c}(x_{1},\dots,x_{n}) \subseteq V$ is not a subspace. For instance,

\[
g(\sqrt{\left \langle 0-x_{1}, 0-x_{1} \right \rangle}, \sqrt{\left \langle 0-x_{2}, 0-x_{2} \right \rangle}, \dots, \sqrt{\left \langle 0-x_{n}, 0-x_{n} \right \rangle}) =
\]
\[
\alpha_{1}\left \|x_{1} \right \| + \dots + \alpha_{n}\left \|x_{n} \right \|.
\]

Then, it might be that $0_{V} \notin L_{c}(x_{1},\dots,x_{n})$. But even if $0_{V} \in L_{c}(x_{1},\dots,x_{n})$, if we have two vectors $v,w \in L_{c}(x_{1},\dots,x_{n})$, then $v+w$ is not necessarily in $ L_{c}(x_{1},\dots,x_{n})$.

It is at this point where our triangle geometry from \ref{sec:section2} will prove to very helpful. In order to formulate the next theorem, we will utilize the formula that we derived to calculate the norm of the addition of vectors in a given vector space, in terms of the norms and inner product of the individual vectors, and the angle in between them. This will help us create a lower and upper bound for the actual value of $g(\left \| z+y-x_{1} \right \|, \dots, \left \| z+y-x_{n} \right \|)$

\begin{theorem}
\label{thm:theorem7}
Let $z \in L_{c}(x_{1}, \dots, x_{n})$ and $y \in V$. Then,

\begin{itemize}

\item[(1)] If

\[
c + (\sum_{i=1}^{n}\alpha_{i})\left \| y \right \| - \sum \alpha_{j} (\frac{\left \| z-x_{j} \right \|^{2} + \left \langle z-x_{j}, y \right \rangle}{\left \| z-x_{j} \right \|})  
\] 
\[
- \sum \alpha_{k} (\sqrt{\left \| y \right \|^{2} + \frac{(\left \| z-x_{k} \right \|^{2} + \left \langle z-x_{k}, y \right \rangle)^{2}}{\left \| z-x_{k} \right \|^{2}}}) < 0
\]
then:

\[
g(\left \| z+y-x_{1} \right \|, \dots, \left \| z+y-x_{n} \right \|) \geq 
c + g(\left \| y \right \|, \dots, \left \| y \right \|). 
\]

\item[(2)] If 

\[
c + (\sum_{i=1}^{n}\alpha_{i})\left \| y \right \| - \sum \alpha_{j} (\sqrt{\left \| y \right \|^{2} + \frac{(\left \| z-x_{k} \right \|^{2} + \left \langle z-x_{k}, y \right \rangle)^{2}}{\left \| z-x_{k} \right \|^{2}}})  
\] 
\[
- \sum \alpha_{k} (\frac{\left \| z-x_{j} \right \|^{2} + \left \langle z-x_{j}, y \right \rangle}{\left \| z-x_{j} \right \|})  > 0
\]
then:

\[
g(\left \| z+y-x_{1} \right \|, \dots, \left \| z+y-x_{n} \right \|) \leq 
c + g(\left \| y \right \|, \dots, \left \| y \right \|). 
\]

\item[(3)] If 

\[
(\sum_{i=1}^{n}\alpha_{i})\left \| y \right \| - c + \sum \alpha_{j} (\frac{\left \| z-x_{j} \right \|^{2} + \left \langle z-x_{j}, y \right \rangle}{\left \| z-x_{j} \right \|})  
\] 
\[
+ \sum \alpha_{k} (\sqrt{\left \| y \right \|^{2} + \frac{(\left \| z-x_{k} \right \|^{2} + \left \langle z-x_{k}, y \right \rangle)^{2}}{\left \| z-x_{k} \right \|^{2}}})  > 0
\]
then:

\[
g(\left \| z+y-x_{1} \right \|, \dots, \left \| z+y-x_{n} \right \|) \geq 
c - g(\left \| y \right \|, \dots, \left \| y \right \|). 
\]

\item[(4)] If \[
(\sum_{i=1}^{n}\alpha_{i})\left \| y \right \| - c + \sum \alpha_{j} (\sqrt{\left \| y \right \|^{2} + \frac{(\left \| z-x_{k} \right \|^{2} + \left \langle z-x_{k}, y \right \rangle)^{2}}{\left \| z-x_{k} \right \|^{2}}})  
\] 
\[
+ \sum \alpha_{k} (\frac{\left \| z-x_{j} \right \|^{2} + \left \langle z-x_{j}, y \right \rangle}{\left \| z-x_{j} \right \|})  < 0
\]
then:
\[
g(\left \| z+y-x_{1} \right \|, \dots, \left \| z+y-x_{n} \right \|) \leq 
c - g(\left \| y \right \|, \dots, \left \| y \right \|). 
\]

For all the statements above, all $\alpha_{j} > 0$ and all $\alpha_{k} < 0$

\end{itemize}

\end{theorem}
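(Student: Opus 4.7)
The plan is to invoke Theorem~\ref{thm:theorem18} term by term to obtain tight upper and lower bounds on each $\left\|z+y-x_{i}\right\|$, and then assemble those bounds linearly while respecting the signs of the coefficients $\alpha_{i}$. Setting $x = z-x_{i}$ in Theorem~\ref{thm:theorem18}, every summand appearing in $g(\left\|z+y-x_{1}\right\|,\dots,\left\|z+y-x_{n}\right\|)$ admits the exact representation
\[
\left\|z+y-x_{i}\right\|^{2} = \left\|y\right\|^{2}\sin^{2}(\theta_{i}) + \frac{\bigl(\left\|z-x_{i}\right\|^{2} + \left\langle z-x_{i}, y\right\rangle\bigr)^{2}}{\left\|z-x_{i}\right\|^{2}},
\]
where $\theta_{i}$ is the angle between $z-x_{i}$ and $y$. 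Because $0 \leq \sin^{2}(\theta_{i}) \leq 1$, this delivers immediately the two one-term bounds
\[
\frac{\left\|z-x_{i}\right\|^{2} + \left\langle z-x_{i}, y\right\rangle}{\left\|z-x_{i}\right\|} \leq \left\|z+y-x_{i}\right\| \leq \sqrt{\left\|y\right\|^{2} + \frac{\bigl(\left\|z-x_{i}\right\|^{2} + \left\langle z-x_{i}, y\right\rangle\bigr)^{2}}{\left\|z-x_{i}\right\|^{2}}},
\]
the lower estimate holding in this unsigned form because discarding the absolute value can only weaken it.

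Next I would partition the index set according to the theorem's sign convention, letting $j$ range over indices with $\alpha_{j} > 0$ and $k$ over those with $\alpha_{k} < 0$, and write $g(\left\|z+y-x_{1}\right\|,\dots,\left\|z+y-x_{n}\right\|) = \sum_{j}\alpha_{j}\left\|z+y-x_{j}\right\| + \sum_{k}\alpha_{k}\left\|z+y-x_{k}\right\|$. Multiplying each one-term bound by a positive $\alpha_{j}$ preserves its direction, while multiplying by a negative $\alpha_{k}$ reverses it. Consequently, a lower bound on $g$ is obtained by pairing the lower one-term bound with positive coefficients and the upper one-term bound with negative coefficients; the corresponding upper bound on $g$ uses the opposite pairing. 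These two assembled expressions are precisely the sums that appear, with signs, inside the four hypothesis inequalities of the theorem statement.

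With these two global bounds in hand, each of the four parts reduces to an elementary comparison. For part~(1) the hypothesis rearranges to the statement that the assembled lower bound on $g$ strictly exceeds $c + \left(\sum_{i}\alpha_{i}\right)\left\|y\right\| = c + g(\left\|y\right\|,\dots,\left\|y\right\|)$; chaining with $g \geq (\text{assembled lower bound})$ yields the conclusion. Parts~(2), (3), and~(4) are handled identically by swapping between the assembled upper and lower bounds, and between the reference values $c + g(\left\|y\right\|,\dots,\left\|y\right\|)$ and $c - g(\left\|y\right\|,\dots,\left\|y\right\|)$, as dictated by the sign pattern on the relevant hypothesis.

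The only real obstacle is the bookkeeping: one must keep the $j/k$ split consistent across the four cases and verify in each case that the displayed hypothesis is the literal algebraic rearrangement of ``(assembled bound) is above / below the claimed reference value''. No conceptual step beyond Theorem~\ref{thm:theorem18} and the sign-reversal rule for multiplication by negative scalars is required, so the main discipline is to keep the lengthy expressions lined up correctly.
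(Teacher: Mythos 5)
Your proposal is correct and follows essentially the same route as the paper's proof: both rest on Theorem~\ref{thm:theorem18} with $x=z-x_{i}$ and the bound $0\leq\sin^{2}(\theta_{i})\leq 1$ to trap each $\left\|z+y-x_{i}\right\|$ between $\frac{\left\|z-x_{i}\right\|^{2}+\left\langle z-x_{i},y\right\rangle}{\left\|z-x_{i}\right\|}$ and $\sqrt{\left\|y\right\|^{2}+\frac{(\left\|z-x_{i}\right\|^{2}+\left\langle z-x_{i},y\right\rangle)^{2}}{\left\|z-x_{i}\right\|^{2}}}$, and then combine these termwise according to the signs of the $\alpha_{i}$ before comparing with $c\pm g(\left\|y\right\|,\dots,\left\|y\right\|)$. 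Your version is in fact slightly tighter in presentation, since it chains the assembled bounds directly against the rearranged hypotheses and explicitly notes that dropping the absolute value only weakens the lower estimate, a point the paper leaves implicit.
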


\begin{proof}
(1) Note that

\[
\sqrt{\left \langle z+y-x_{i}, z+y-x_{i} \right \rangle} = \sqrt{\left \langle z-x_{i}+y, z-x_{i} +y \right \rangle} = 
\]
\[
\sqrt{\left \langle (z-x_{i})+y, (z-x_{i}) +y \right \rangle} = \sqrt{\left \langle z-x_{i}, z-x_{i} \right \rangle + 2\left \langle z-x_{i}, y \right \rangle + \left \langle y,y \right \rangle }.
\]

By the Cauchy-Schwarz inequality,

\[
2 | \left \langle z-x_{i}, y \right \rangle | \leq 2 \left \| z-x_{i}\right \| \left \| y\right \|.
\]

If $\left \langle z-x_{i}, y \right \rangle \geq 0$, this means that
 \[
 \left \langle z-x_{i}, z-x_{i} \right \rangle + 2\left \langle z-x_{i}, y \right \rangle + \left \langle y,y \right \rangle \leq \left \langle z-x_{i}, z-x_{i} \right \rangle + 2\left \| z-x_{i}\right \| \left \| y\right \| + \left \langle y,y \right \rangle.
 \]

 But we also know that
 
 \[
 \left \langle z-x_{i}, z-x_{i} \right \rangle + 2 \left \| z-x_{i}\right \| \left \| y\right \| +  \left \langle y,y \right \rangle = (\left \| z-x_{i}\right \| + \left \| y\right \|)^2.
 \]

 Hence,

\[
\left \| z+y-x_{i} \right \| \leq \left \| z-x_{i} \right \| + \left \| y \right \|.
\]

We can look at the bigger picture of $g$ and obtain that

\[
g(\left \| z+y-x_{1} \right \|, \dots, \left \| z+y-x_{n} \right \|) = 
\]
\[
\alpha_{1}\left \| z+y-x_{1} \right \| + \dots + \alpha_{n}\left \| z+y-x_{n} \right \|.
\]

Here we must be careful, because if $\alpha_{k} < 0$, then

\[\alpha_{k}\left \| z+y-x_{k} \right \| > \alpha_{k}(\left \| z-x_{1} \right \| + \left \| y \right \|).
\]

For that reason, we will have to find an estimate for the overall change in the function $g$ to determine how it is affected by the addition of the vector $y$. In order to do this, we will make use of Theorem \ref{thm:theorem18}. Using Theorem \ref{thm:theorem18} by letting $x = z-x_{k}$ and $y = y$, we have that:

\[
\left \| z-x_{k}+y \right \|^{2} = \left \| y\right \|^{2}\sin^{2}(\arccos(\frac{\left \langle x, y \right \rangle}{\left \| x \right \| \left \| y \right \|})) + \frac{(\left \|z-x_{k} \right \|^{2} + \left \langle z-x_{k}, y\right \rangle)^{2}}{\left \|z-x_{k} \right \|^{2}})
\]

We also know that the function $0 \leq \sin^{2}(x) \leq 1$, so we can easily find some bounds for the value of $\left \| z-x_{k}+y \right \|$:

\[
\frac{\left \|z-x_{k} \right \|^{2} + \left \langle z-x_{k}, y\right \rangle}{\left \|z-x_{k} \right \|} \leq \left \| z-x_{k}+y \right \| \leq
\]
\[
\sqrt{\left \| y\right \|^{2} + \frac{(\left \|z-x_{k} \right \|^{2} + \left \langle z-x_{k}, y\right \rangle)^{2}}{\left \|z-x_{k} \right \|^{2}}}
\]

So, in order to estimate the change in the function $g$ and get some bounds as to where the resulting value can be, one can realize that we must calculate:
 
\[
\sum_{i=1}^{n}\alpha_{i}(\left \| z-x_{i} \right \| + \left \| y \right \| - \left \| z+y-x_{i} \right \|)
\]

Of course, using the exact value of $\left \| z+y-x_{i} \right \|$ would be of no use since, knowing its value, one could easily determine the value of the function. For that reason, we will make use of the inequality observed above and note that. Due to space constraints, we will define $\delta_{i}$ to be the approximation of the term $\left \| z+y-x_{i} \right \|$:

\[
-(\frac{\left \|z-x_{k} \right \|^{2} + \left \langle z-x_{k}, y\right \rangle}{\left \|z-x_{k} \right \|}) \geq -\left \| z-x_{k}+y \right \| \geq
\]
\[
-\sqrt{\left \| y\right \|^{2} + \frac{(\left \|z-x_{k} \right \|^{2} + \left \langle z-x_{k}, y\right \rangle)^{2}}{\left \|z-x_{k} \right \|^{2}}}
\]

Note that we change the signs because the value we are trying to approximate has the term $-\left \| z-x_{k}+y \right \|$.

In this case, if we have that $\alpha_{k} > 0$, then the inequality is not affected and:

\[
-\alpha_{k}(\frac{\left \|z-x_{k} \right \|^{2} + \left \langle z-x_{k}, y\right \rangle}{\left \|z-x_{k} \right \|}) \geq -\alpha_{k}\left \| z-x_{k}+y \right \| \geq
\]
\[
-\alpha_{k}\sqrt{\left \| y\right \|^{2} + \frac{(\left \|z-x_{k} \right \|^{2} + \left \langle z-x_{k}, y\right \rangle)^{2}}{\left \|z-x_{k} \right \|^{2}}}
\]

However, if $\alpha_{k} < 0$, then:

\[
-\alpha_{k}(\frac{\left \|z-x_{k} \right \|^{2} + \left \langle z-x_{k}, y\right \rangle}{\left \|z-x_{k} \right \|}) \leq -\alpha_{k}\left \| z-x_{k}+y \right \| \leq
\]
\[
-\alpha_{k}\sqrt{\left \| y\right \|^{2} + \frac{(\left \|z-x_{k} \right \|^{2} + \left \langle z-x_{k}, y\right \rangle)^{2}}{\left \|z-x_{k} \right \|^{2}}}
\]

This hints to the fact that, depending on the value of $\alpha_{k}$ we must choose one value for the estimate or another. Recall that $\delta_{i}$ is our approximation of $\left \| z+y-x_{i} \right \|$ 

\[
\sum_{i=1}^{n}\alpha_{i}(\left \| z-x_{i} \right \| + \left \| y \right \| - \delta_{i}) = 
\]

\[
c + (\sum_{i=1}^{n}\alpha_{i})\left \| y \right \| - \sum_{i=1}^{n}\alpha_{i}(\delta_{i})
\]

Suppose that we choose the largest possible approximation for $\left \| z+y-x_{i} \right \|$ each term in $\sum_{i=1}^{n}\alpha_{i}(\delta_{i})$. That is, suppose that we choose $\delta_{i}$ to be as large as it can possibly be using the inequalities we obtained above. In that case, if we have that:

\[
c + (\sum_{i=1}^{n}\alpha_{i})\left \| y \right \| - \sum_{i=1}^{n}\alpha_{i}(\delta_{i}) < 0,
\]
 then it must mean that, regardless of the true value of $\left \| z+y-x_{i} \right \|$, the overall difference in the function $g$ is going to be negative because all values $\left \| z+y-x_{i} \right \| \leq \delta_{i}$

If we have that $\alpha_{k} > 0$, then we have the inequality:

\[
-\alpha_{k}(\frac{\left \|z-x_{k} \right \|^{2} + \left \langle z-x_{k}, y\right \rangle}{\left \|z-x_{k} \right \|}) \geq -\alpha_{k}\left \| z-x_{k}+y \right \| \geq
\]
\[
-\alpha_{k}\sqrt{\left \| y\right \|^{2} + \frac{(\left \|z-x_{k} \right \|^{2} + \left \langle z-x_{k}, y\right \rangle)^{2}}{\left \|z-x_{k} \right \|^{2}}},
\]
which means that the largest possible value for our estimate $\delta_{k} = \frac{\left \|z-x_{k} \right \|^{2} + \left \langle z-x_{k}, y\right \rangle}{\left \|z-x_{k} \right \|}$. On the other hand, if we have that $\alpha_{k} < 0$, then the inequality is:

\[
-\alpha_{k}\frac{\left \|z-x_{k} \right \|^{2} + \left \langle z-x_{k}, y\right \rangle}{\left \|z-x_{k} \right \|} \leq -\alpha_{k}\left \| z-x_{k}+y \right \| \leq
\]
\[
-\alpha_{k}\sqrt{\left \| y\right \|^{2} + \frac{(\left \|z-x_{k} \right \|^{2} + \left \langle z-x_{k}, y\right \rangle)^{2}}{\left \|z-x_{k} \right \|^{2}})},
\]
which means that the largest possible value of $\delta_{k} = \sqrt{\left \| y\right \|^{2} + \frac{(\left \|z-x_{k} \right \|^{2} + \left \langle z-x_{k}, y\right \rangle)^{2}}{\left \|z-x_{k} \right \|^{2}}}$

As such, we obtain the equation:

\[
c + (\sum_{i=1}^{n}\alpha_{i})\left \| y \right \| - \sum \alpha_{j} (\frac{\left \| z-x_{j} \right \|^{2} + \left \langle z-x_{j}, y \right \rangle}{\left \| z-x_{j} \right \|})  
\] 
\[
- \sum \alpha_{k} (\sqrt{\left \| y \right \|^{2} + \frac{(\left \| z-x_{k} \right \|^{2} + \left \langle z-x_{k}, y \right \rangle)^{2}}{\left \| z-x_{k} \right \|^{2}}}) < 0, \textrm{where 
 all $\alpha_{j} > 0$ and all $\alpha_{k} < 0$}
\]

As we stated earlier, this means that:

\[
\sum_{i=1}^{n}\alpha_{i}(\left \| z-x_{i} \right \| + \left \| y \right \| - \left \| z+y-x_{i} \right \|) < 0
\]

From which we obtain that:

\[
g(\left \| z-x_{1}+y \right \|, \dots, \left \| z-x_{n}+y \right \|) \geq
\]
\[
c + g(\left \| y \right \|, \dots, \left \| y \right \|)
\]

(2) On the other hand, if we follow the same process, but instead of using the largest possible estimate, we use the smallest possible estimate and have that:  

\[
c + (\sum_{i=1}^{n}\alpha_{i})\left \| y \right \| - \sum_{i=1}^{n}\alpha_{i}(\delta_{i}) > 0,
\]
then it implies that:
\[
\sum_{i=1}^{n}\alpha_{i}(\left \| z-x_{i} \right \| + \left \| y \right \| - \left \| z+y-x_{i} \right \|) > 0
\]
Because the actual value of $\left \| z-x_{k} + y \right \|$ will be smaller than the estimate we choose (note that we are changing the sign of $\left \| z-x_{k} + y \right \|$ in the equation, so an underestimate for $-\left \| z-x_{k} + y \right \|$ multiplied by $-1$ is an overestimate for $\left \| z-x_{k} + y \right \|$). Consequently:

\[
g(\left \| z+y-x_{1}\right \|, \dots, \left \| z+y-x_{n} \right \|) \leq  
\]
\[
c + g(\sqrt{\left \langle y,y \right \rangle}, \dots, \sqrt{\left \langle y,y \right \rangle}). 
\]

Since we are using the smallest possible estimate in this case, then we will have that, if $\alpha_{k} > 0$, then $\delta_{k} =\sqrt{\left \| y\right \|^{2} + \frac{(\left \|z-x_{k} \right \|^{2} + \left \langle z-x_{k}, y\right \rangle)^{2}}{\left \|z-x_{k} \right \|^{2}})}$. If $\alpha_{k} > 0$, then $\delta_{k} = \frac{\left \|z-x_{k} \right \|^{2} + \left \langle z-x_{k}, y\right \rangle}{\left \|z-x_{k} \right \|}$. Using these estimates, we get the equation:

\[
c + (\sum_{i=1}^{n}\alpha_{i})\left \| y \right \| - \sum \alpha_{j} (\sqrt{\left \| y \right \|^{2} + \frac{(\left \| z-x_{k} \right \|^{2} + \left \langle z-x_{k}, y \right \rangle)^{2}}{\left \| z-x_{k} \right \|^{2}}})  
\] 
\[
- \sum \alpha_{k} (\frac{\left \| z-x_{j} \right \|^{2} + \left \langle z-x_{j}, y \right \rangle}{\left \| z-x_{j} \right \|})  > 0, \textrm{where 
 all $\alpha_{j} > 0$ and all $\alpha_{k} < 0$}
\]

(3) However, by the Cauchy-Schwarz inequality, we also have the case that:

\[
-2 \left \| z-x_{k} \right \| \left \| y \right \| \leq 2 \left \langle z-x_{k}, y \right \rangle
\]

From this we obtain the inequality:

\[
\left \| z-x_{k} \right \| - \left \| y \right \| \leq \left \| z-x_{k}+y \right \|
\]

In this case, we would have to consider the following equation in order to determine the change in the function $g$ and whether the estimate we obtain is an overestimate or an underestimate:

\[
\sum_{i=1}^{n}\alpha_{i}(\left \| z+y-x_{i} \right \| -(\left \| z-x_{i} \right \| - \left \| y \right \|) )
\]
 
 By similar argumentation as before and realizing that the coefficient of the term $ \left \| z+y-x_{i} \right \|$ is not negative, then we can realize that if we use underestimates for $\delta_{k}$ and the result is positive, then the actual value of $\sum_{i=1}^{n}\alpha_{i}(\left \| z+y-x_{i} \right \| -(\left \| z-x_{i} \right \| - \left \| y \right \|) ) > 0$. Hence, knowing this, we obtain the equation:

 \[
(\sum_{i=1}^{n}\alpha_{i})\left \| y \right \| - c + \sum \alpha_{j} (\frac{\left \| z-x_{j} \right \|^{2} + \left \langle z-x_{j}, y \right \rangle}{\left \| z-x_{j} \right \|})  
\] 
\[
+ \sum \alpha_{k} (\sqrt{\left \| y \right \|^{2} + \frac{(\left \| z-x_{k} \right \|^{2} + \left \langle z-x_{k}, y \right \rangle)^{2}}{\left \| z-x_{k} \right \|^{2}}})  > 0, \textrm{where 
 all $\alpha_{j} > 0$ and all $\alpha_{k} < 0$}
\]

This leads to the conclusion that:

\[
g(\left \| z+y-x_{1} \right \|, \dots, \left \| z+y-x_{n} \right \|) \geq  
\]
\[
c - g(\left \| y \right \|, \dots, \left \| y \right \|). 
\]

(4) Similar to (3).
\end{proof}

\begin{remark}

The equality in $ \left \langle z-x_{i}, y \right \rangle \leq 0$ and $ \left \langle z-x_{i}, y \right \rangle \geq 0$ is left because if $ \left \langle z-x_{i}, y \right \rangle = 0$, then it is true that:

\[
\left \| z-x_{i}\right \|^2 + \left \| y\right \|^2 \leq \left \| z-x_{i}\right \|^2 + 2\leq \left \| z-x_{i}\right \|\leq \left \| y \right \| + \left \| y\right \|^{2},
\]

and

\[
\left \| z-x_{i}\right \|^2 + \left \| y\right \|^2 \geq \left \| z-x_{i}\right \|^2 - 2\leq \left \| z-x_{i}\right \|\leq \left \| y \right \| + \left \| y\right \|^{2},
\]

\end{remark}

One should note that some of these conditions can occur simultaneously. The conditions outlined above in the theorem are not mutually exclusive. Evidently, conditions (1) and (4), and (2) and (3) could be simultaneously true. That is, it could very well be the case that:

\[
c + g(\left \| y \right \|, \dots, \left \| y \right \|) \leq g(\left \| z+y-x_{1} \right \|, \dots, \left \| z+y-x_{n} \right \|) \leq c - g(\left \| y \right \|, \dots, \left \| y \right \|)
\]
or
\[
c - g(\left \| y \right \|, \dots, \left \| y \right \|) \leq g(\left \| z+y-x_{1} \right \|, \dots, \left \| z+y-x_{n} \right \|) \leq c + g(\left \| y \right \|, \dots, \left \| y \right \|)
\]
respectively. One can also notice that, through (2) and (4), and (1) and (3) cannot always be true simultaneously, they can indeed provide for better estimates. For instance, consider the case where
\[
g(\left \| z+y-x_{1} \right \|, \dots, \left \| z+y-x_{n} \right \|) \leq c - g(\left \| y \right \|, \dots, \left \| y \right \|),
\]

that (4) is true. If (2) is also true and:
\[
g(\left \| z+y-x_{1} \right \|, \dots, \left \| z+y-x_{n} \right \|) \leq c + g(\left \| y \right \|, \dots, \left \| y \right \|),
\]
then one must note the relation between $c - g(\left \| y \right \|, \dots, \left \| y \right \|)$ and $c + g(\left \| y \right \|, \dots, \left \| y \right \|)$. Evidently, if, say, $ c - g(\left \| y \right \|, \dots, \left \| y \right \|) < c + g(\left \| y \right \|, \dots, \left \| y \right \|)$, then:
\[
g(\left \| z+y-x_{1} \right \|, \dots, \left \| z+y-x_{n} \right \|) \leq c - g(\left \| y \right \|, \dots, \left \| y \right \|),
\]
has a better boundary than:
\[
g(\left \| z+y-x_{1} \right \|, \dots, \left \| z+y-x_{n} \right \|) \leq c + g(\left \| y \right \|, \dots, \left \| y \right \|),
\].

Suppose that we add a vector $y \in V$ to a vector $z \in L_{c}(x_{1}, \dots, x_{n})$. Depending on the sign of the functions detailed above in the theorem, then have that the resulting vector $z+y$ can, at most, increase $g$ by $\Delta c = g(\left \| y \right \|, \dots, \left \| y \right \|)$ or $\Delta c = -g(\left \| y \right \|, \dots, \left \| y \right \|)$ (here the values of $\alpha_{k}$ are crucial in determining this). In other words, the largest constant $k$ for which $z+y$ can be a locus point is, in fact, $k = c + \Delta c$, meaning that, at most, $z+y \in L_{c + \Delta c}(x_{1},\dots,x_{n})$. Similarly, if the values of the function are different, then the resulting vector $z+y$ can, at least, increase $g$ by $\Delta c = - g(\left \| y \right \|, \dots, \left \| y \right \|)$ or $\Delta c = g(\left \| y \right \|, \dots, \left \| y \right \|)$.

\begin{example}

 Suppose now that $V = P_{2}(\mathbb{R})$. We will define the inner product in this vector space as:

\[
\left \langle \cdot, \cdot \right \rangle \to \mathbb{R}
\]
\[
\left \langle f(x), g(x) \right \rangle = \int_{0}^{1} f(x)\cdot \overline{g(x)} dx = \int_{0}^{1} f(x)\cdot g(x) dx.
\]

Suppose now that our focus points are $x \in V$ and $-x \in V$ and our constant $c$ is $\frac{1}{2}$. Then we obtain the following equation:

\[
g(\sqrt{\left \langle ax^2+bx+c + x ,ax^2+bx+c +x \right \rangle},\]\[ \sqrt{\left \langle ax^2+bx+c - x,ax^2+bx+c - x \right \rangle}) =
\]
\[
\sqrt{\left \langle ax^2+bx+c +x ,ax^2+bx+c +x \right \rangle} -\] \[\sqrt{\left \langle ax^2+bx+c - x,ax^2+bx+c - x \right \rangle} = \frac{1}{2}.
\]

We simplify the equation even more to obtain:

\[
\sqrt{\int_{0}^{1} (ax^2+bx+c +x)^2 dx} - \sqrt{\int_{0}^{1} (ax^2+bx+c - x)^2 dx} = \frac{1}{2}.
\]

A solution to this equation is

\[
3x+\frac{\sqrt{65}}{12}-\frac{13}{6}.
\]

We will now move on to choosing a different vector $y \in V$. For example, let $y=x^2$. In this case, one can also confirm that

\[
\int_{0}^{1} (3x+\frac{\sqrt{65}}{12}-\frac{13}{6} +x)(x^2) dx \approx 0.501 > 0,
\]
and:
\[
\int_{0}^{1} (3x+\frac{\sqrt{65}}{12}-\frac{13}{6} - x)(x^2) dx \approx 0.002 > 0.
\]

In addition,

\[
c + (\sum_{i=1}^{2}\alpha_{i})\left \| y \right \| -  1 \cdot (\frac{\left \| z-x_{j} \right \|^{2} + \left \langle z-x_{j}, y \right \rangle}{\left \| z-x_{j} \right \|})  
\] 
\[
- (-1) \cdot (\sqrt{\left \| y \right \|^{2} + \frac{(\left \| z-x_{k} \right \|^{2} + \left \langle z-x_{k}, y \right \rangle)^{2}}{\left \| z-x_{k} \right \|^{2}}}) \approx
\]

\[ \frac{1}{2} - 1.6585 +0.9565  + \sqrt{10} = -0.2990 <0.\]

Hence,

\[
\sqrt{\int_{0}^{1} (3x+\frac{\sqrt{65}}{12}-\frac{13}{6} +x + x^2)^2 dx} - \sqrt{\int_{0}^{1} (3x+\frac{\sqrt{65}}{12}-\frac{13}{6} - x + x^2)^2 dx} \approx \]\[0.7868 > \frac{1}{2} + 0
\]

\end{example}

\subsubsection{The case $\alpha_{1} + \dots + \alpha_{n} = 0$}

There is a particular we might want to give more consideration to. Consider the case:

\[
g(\left \| y \right \|, \dots, \left \| y \right \|) = 0.
\]

In that case, the inequalities we proved above reduce to (depending on the values of $\alpha_{i}$, as specified above)

\[
g(\left \| z+y-x_{1} \right \|, \dots, \left \| z+y-x_{n} \right \|) \geq  
c,
\]

\[
g(\left \| z+y-x_{1} \right \|, \dots, \left \| z+y-x_{n} \right \|) \leq  
c,
\]

\[
g(\left \| z+y-x_{1} \right \|, \dots, \left \| z+y-x_{n} \right \|) \geq  
c,
\]

\[
g(\left \| z+y-x_{1} \right \|, \dots, \left \| z+y-x_{n} \right \|) \leq  
c,
\]

Moreover, note that

\[
g(\left \| y \right \|, \dots, \left \| y \right \|) = \alpha_{1} \left \| y \right \| + \dots + \alpha_{n}\left \| y \right \| = 
\]
\[
(\alpha_{1} + \dots + \alpha_{n})\left \| y \right \| = 0
\]

Then, $(\alpha_{1} + \dots + \alpha_{n})\left \| y \right \| = 0$ if and only if $\left \| y \right \| = 0$ or $\alpha_{1} + \dots + \alpha_{n} = 0$. The former equality, $\left \| y \right \| = 0$ yields that $y = 0$ by Theorem \ref{thm:theorem5}. This situation is, however, does not reveal any new information about the locus points. 

On the other hand, if we happen to have a function $g$ such that $\alpha_{1} + \dots + \alpha_{n} = 0$, then $(\alpha_{1} + \dots + \alpha_{n})\left \| y \right \| = 0$ regardless of the value of $\left \| y \right \|$. This result is much more interesting because any vector $y \in V$ can be added to any vector $z \in L_{c}(x_{1}, \dots, x_{n})$ and depending on the signs of the functions detailed in the theorem, we would have that the largest (and here by largest we understand with the highest constant) locus points where $z+y$ can be found is $ z + y \in L_{c}(x_{1}, \dots, x_{n}) $ or that the smallest locus points where $z+y$ can be found is $ z + y \in L_{c}(x_{1}, \dots, x_{n}) $). Moreover, since

\[
g(\left \| k y,ky \right \|, \dots, \left \| k y,ky \right \|) = \alpha_{1} \left \| k y,ky \right \| + \dots + \alpha_{n}\left \| k y,ky \right \| = 
\]
\[
(\alpha_{1} + \dots + \alpha_{n})\left \| k y,ky \right \| =
k(\alpha_{1} + \dots + \alpha_{n})\sqrt{\left \langle y,y \right \rangle} = k \cdot 0 = 0,
\]

any vector $ky \in V$ where $k \in \mathbb{R}$ can also be added to $z$. Thus, when this property holds, we need only focus on the original loci and the values of $\alpha_{i}$ in order to determine the direction of the inequality.

A more interesting phenomenon can be observed when $\alpha_{1} + \dots + \alpha_{n} = 0$ and we have the following equality:

\[
g(\sqrt{\left \langle z+y-x_{1}, z+y-x_{1} \right \rangle}, \dots, \sqrt{\left \langle z+y-x_{n}, z-x_{n} \right \rangle}) =  
\]
\[
 g(\sqrt{\left \langle z-x_{1}, z-x_{1} \right \rangle},\dots,\sqrt{\left \langle z-x_{n}, z-x_{n} \right \rangle}) .
\]

This equality is, perhaps, much more interesting since the vector resulting from the addition, $z+y$, lies in $ L_{c}(x_{1},\dots,x_{n})$. Such vectors $y$ are the ones that solve the equation:
\[
g(\sqrt{\left \langle z+y-x_{1}, z+y-x_{1} \right \rangle}, \dots, \sqrt{\left \langle z+y-x_{n}, z-x_{n} \right \rangle}) =  c.
\]
Each vector $z$ will have different vectors $y$ such that the equation above is satisfied.

\begin{example}

Let:

 \[
g(\sqrt{(x-2)^2+y^2}, \sqrt{(x+2)^2+y^2}) = \sqrt{(x-2)^2+y^2} - \sqrt{(x+2)^2+y^2} = 2.
 \]

 $(-2,-3)$ is a point that satisfies this equation. Let us consider all the possible vectors $y$ which can be added to this point and find the ones where the sum of both is still in $L_{2}((-2,0),(2,0))$. This might be solved using the following equation:

 \[
\sqrt{(-2+x-2)^2+(-3+y)^2} - \sqrt{(-2+x+2)^2+(-3+y)^2} = 2.
 \]

In Figure \ref{fig:graph33} we see the intersection of the graph above with the plane $z=2$, which is where the equation \[\sqrt{(-2+x-2)^2+(-3+y)^2} - \sqrt{(-2+x+2)^2+(-3+y)^2} = 2\] is satisfied, and hence we have all the vectors $y \in \mathbb{R}^2$ such that $z+y \in L_{2}((-2,0),(2,0))$. These vectors $y$ are graphed as a function in Figure \ref{fig:graph34}.

\begin{figure}
\centering
\includegraphics[width=0.75\textwidth]{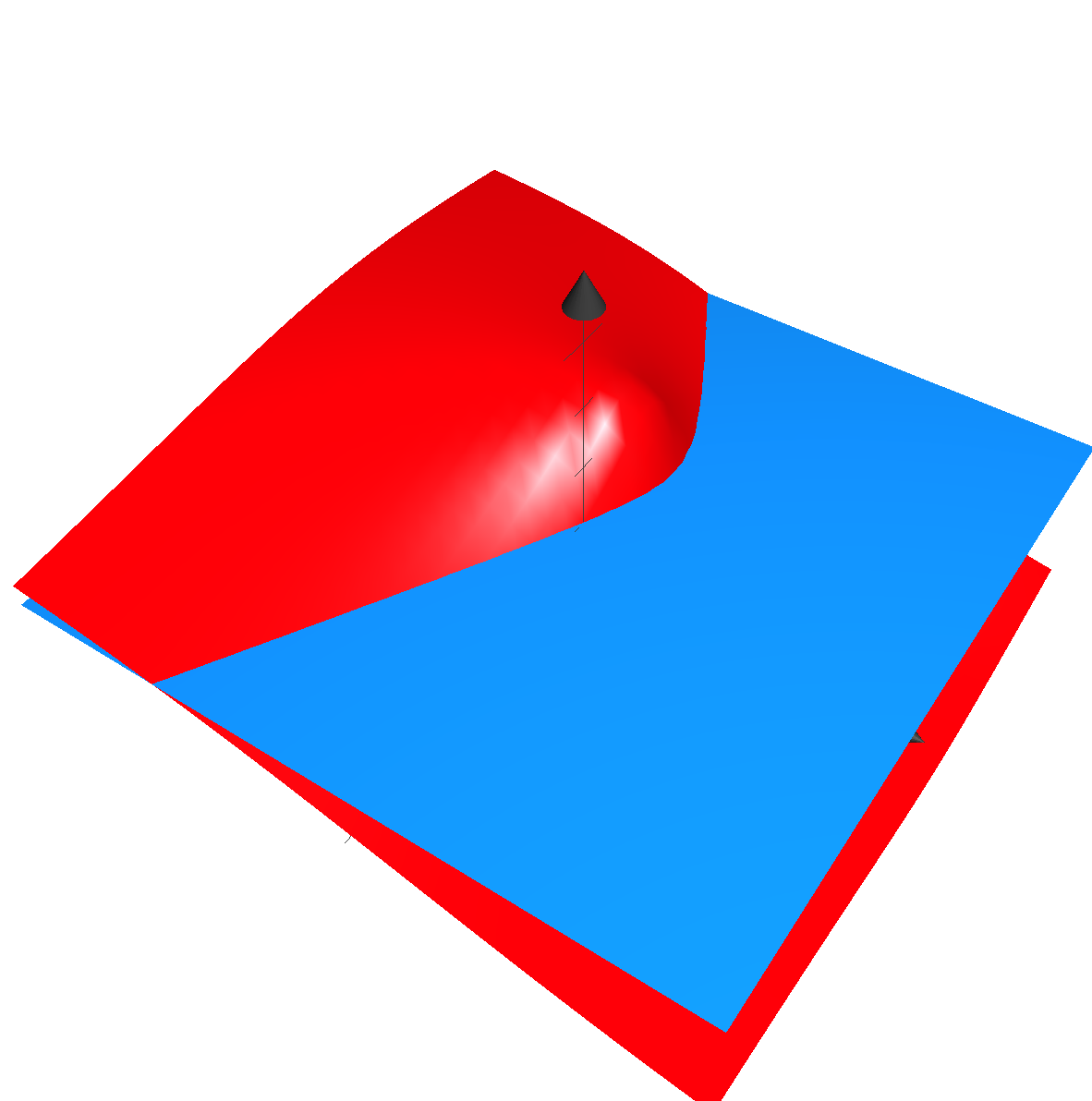}
\caption{\label{fig:graph33}Graph of $\sqrt{(-2+x-2)^2+(-3+y)^2} - \sqrt{(-2+x+2)^2+(-3+y)^2}$ and its intersection with the plane $z=2$}
\end{figure}

\begin{figure}
\centering
\includegraphics[width=0.75\textwidth]{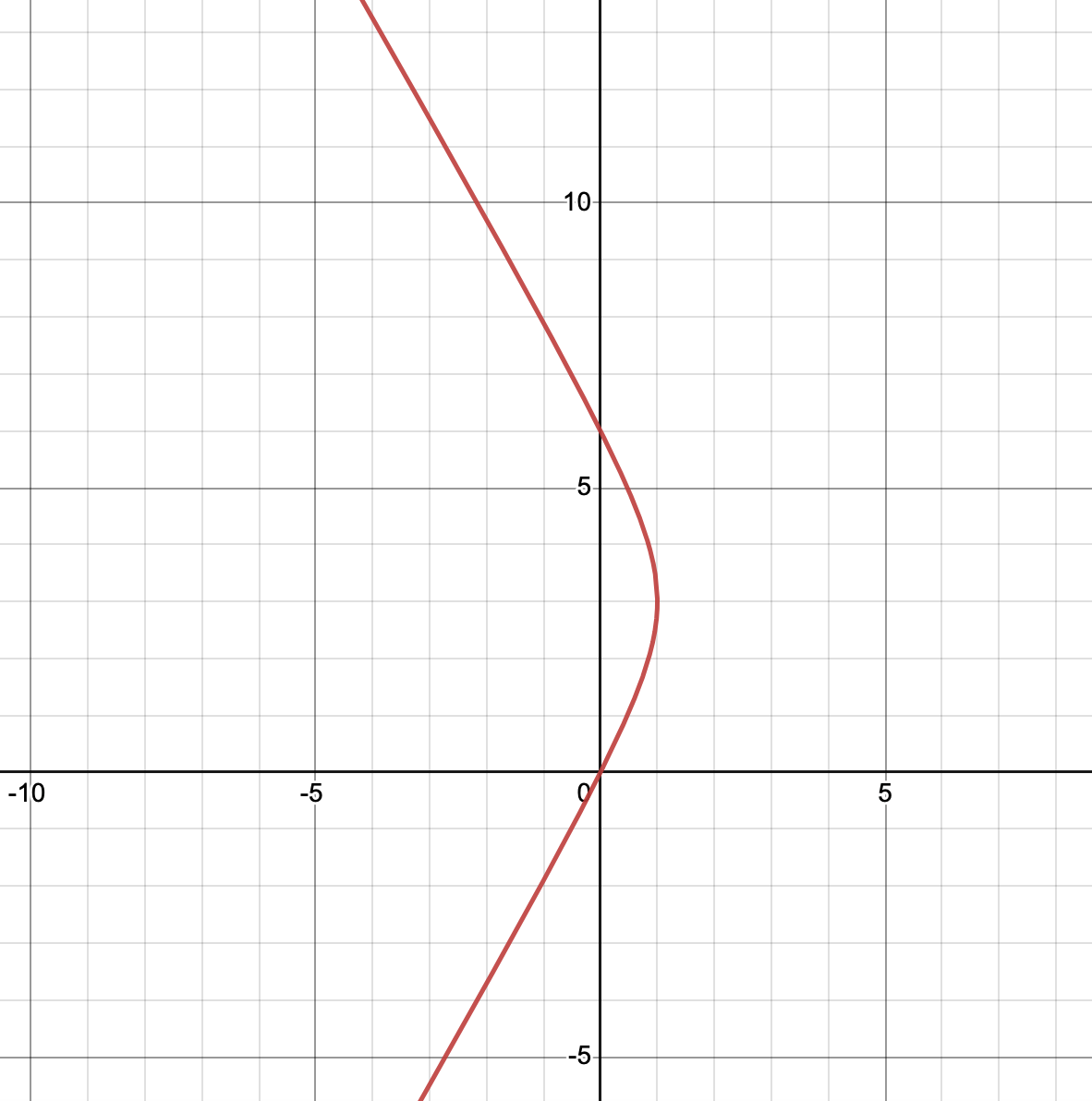}
\caption{\label{fig:graph34}Graph of $\sqrt{(-2+x-2)^2+(-3+y)^2} - \sqrt{(-2+x+2)^2+(-3+y)^2}$ and its intersection with the plane $z=2$}
\end{figure}

Now that we have these figures in mind, notice that all the points $z \in L_{c}(x_{1},\dots,x_{n})$ can be written in the form $z_{1} + y$, where $y$ is a solution to the equation above and $z_{1}$ is the point with respect to which we choose to find $y$. We can also readily realize that

\[
g(\sqrt{\left \langle z+y-x_{1}, z+y-x_{1} \right \rangle}, \dots, \sqrt{\left \langle z+y-x_{n}, z-x_{n} \right \rangle}) =  
\]
\[
g(\sqrt{\left \langle y-(x_{1}-z), y-(x_{1}-z) \right \rangle}, \dots, \sqrt{\left \langle y-(x_{n}-z), y-(x_{n}-z) \right \rangle}) .
\]

\end{example}
Essentially, what we are doing is translating the focus points to $x_{i}-z$ and then solving for the different vectors $y$. This realization is helpful because since we are translating the focus points of the function $g$, we can choose any original point $z_{1}$ that will eliminate the value of the focus points and then solve for the different values of $y$ in order to obtain the different vectors $z \in L_{c}(x_{1}, \dots,x_{n})$ in a much easier fashion.

 \pagebreak
 
\subsection{Addition of Vectors in \begin{math} L_{c}(x_{1},\dots,x_{n}) \end{math}}

Now that we have seen the behavior when trying to add some vectors to existing vectors $z \in L_{c}(x_{1},\dots,x_{n})$, and after considering the interesting case that arises specifically when we have that $\alpha_{1}+\dots+\alpha_{n}=0$, we might want to consider what happens when we are adding two vectors $v, w \in L_{c}(x_{1},\dots,x_{n})$. We saw earlier that the subset $L_{c}(x_{1},\dots,x_{n})$ is not closed under addition, hence we know that, maybe $v+w \notin L_{c}(x_{1},\dots,x_{n})$. However, we do have the following result:

\begin{theorem}
\label{thm:theorem8}
For any $v, w \in L_{c}(x_{1},\dots,x_{n})$, then

\begin{itemize}

\item[(1)] If

\[
2c - \sum \alpha_{j} (\frac{\left \| v-x_{j} \right \|^{2} + \left \langle z-x_{j}, w-x_{j} \right \rangle}{\left \| v-x_{j} \right \|})  
\] 
\[
- \sum \alpha_{k} (\sqrt{\left \| w-x_{k} \right \|^{2} + \frac{(\left \| v-x_{k} \right \|^{2} + \left \langle v-x_{k}, w-x_{k} \right \rangle)^{2}}{\left \| v-x_{k} \right \|^{2}}}) < 0, 
\] where 
 all $\alpha_{j} > 0$ and all $\alpha_{k} < 0$,
then:

\[
g(\left \| z+w-2x_{1} \right \|, \dots, \left \| v+w-2x_{n} \right \|) \geq 
2c. 
\]

\item[(2)] If 

\[
2c - \sum \alpha_{j} (\sqrt{\left \| w-x_{j} \right \|^{2} + \frac{(\left \| v-x_{j} \right \|^{2} + \left \langle v-x_{j}, w-x_{j} \right \rangle)^{2}}{\left \| v-x_{j} \right \|^{2}}})  
\] 
\[
- \sum \alpha_{k} (\frac{\left \| v-x_{j} \right \|^{2} + \left \langle v-x_{j}, w-x_{j} \right \rangle}{\left \| v-x_{j} \right \|})  > 0,
\] where 
 all $\alpha_{j} > 0$ and all $\alpha_{k} < 0$,
then:

\[
g(\left \| z+w-2x_{1} \right \|, \dots, \left \| v+w-2x_{n} \right \|) \leq 
2c.
\]

\item[(3)] If 

\[
 \sum \alpha_{j} (\frac{\left \| v-x_{j} \right \|^{2} + \left \langle v-x_{j}, w-x_{j} \right \rangle}{\left \| v-x_{j} \right \|})  
\] 
\[
+ \sum \alpha_{k} (\sqrt{\left \| w-x_{k} \right \|^{2} + \frac{(\left \| v-x_{k} \right \|^{2} + \left \langle v-x_{k}, w-x_{k} \right \rangle)^{2}}{\left \| v-x_{k} \right \|^{2}}})  > 0, 
\] where 
 all $\alpha_{j} > 0$ and all $\alpha_{k} < 0$,
then:

\[
g(\left \| z+w-2x_{1} \right \|, \dots, \left \| v+w-2x_{n} \right \|) \geq 
0.
\]

\item[(4)] If \[
\sum \alpha_{j} (\sqrt{\left \| w-x_{j} \right \|^{2} + \frac{(\left \| v-x_{j} \right \|^{2} + \left \langle v-x_{j}, w-x_{j} \right \rangle)^{2}}{\left \| v-x_{j} \right \|^{2}}})  
\] 
\[
+ \sum \alpha_{k} (\frac{\left \| v-x_{k} \right \|^{2} + \left \langle v-x_{k}, w-x_{k} \right \rangle}{\left \| v-x_{k} \right \|})  < 0,
\]where 
 all $\alpha_{j} > 0$ and all $\alpha_{k} < 0$,
 then:

\[
g(\left \| z+w-2x_{1} \right \|, \dots, \left \| v+w-2x_{n} \right \|) \geq 
0. 
\]

\end{itemize}

\end{theorem}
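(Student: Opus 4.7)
The plan is to mirror the proof of Theorem \ref{thm:theorem7}, since this statement is structurally identical but with the role of $y$ played by the $i$-dependent vector $w-x_i$ and the role of $\|z-x_i\|$ played by $\|v-x_i\|$. The key observation is that
\[
v + w - 2x_i \;=\; (v - x_i) + (w - x_i),
\]
so the norm $\|v+w-2x_i\|$ is the norm of a sum, which is precisely the quantity controlled by Theorem \ref{thm:theorem18}.

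First I would apply Theorem \ref{thm:theorem18} with $x := v-x_i$ and $y := w-x_i$ to obtain
\[
\|v+w-2x_i\|^2 \;=\; \|w-x_i\|^2 \sin^2(\theta_i) \;+\; \frac{\bigl(\|v-x_i\|^2 + \langle v-x_i,\,w-x_i\rangle\bigr)^2}{\|v-x_i\|^2},
\]
where $\theta_i$ denotes the relevant arccosine. Since $0 \le \sin^2(\theta_i) \le 1$, this yields the pointwise lower bound
\[
L_i \;:=\; \frac{\|v-x_i\|^2 + \langle v-x_i,\,w-x_i\rangle}{\|v-x_i\|} \;\le\; \|v+w-2x_i\|
\]
and the pointwise upper bound
\[
\|v+w-2x_i\| \;\le\; U_i \;:=\; \sqrt{\|w-x_i\|^2 + \frac{(\|v-x_i\|^2 + \langle v-x_i,\,w-x_i\rangle)^2}{\|v-x_i\|^2}}.
\]
These are exactly the two quantities that appear in the four hypotheses of the theorem.

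Next I would split the sum defining $g(\|v+w-2x_1\|,\dots,\|v+w-2x_n\|)$ into the terms with $\alpha_j > 0$ and the terms with $\alpha_k < 0$. Because multiplying by a negative scalar flips inequalities, for an \emph{upper} estimate on $g$ one uses $U_j$ on positive-$\alpha$ terms and $L_k$ on negative-$\alpha$ terms, while for a \emph{lower} estimate one uses $L_j$ on positive-$\alpha$ terms and $U_k$ on negative-$\alpha$ terms. This gives the two master inequalities
\[
\sum_j \alpha_j L_j + \sum_k \alpha_k U_k \;\le\; g(\|v+w-2x_1\|,\dots) \;\le\; \sum_j \alpha_j U_j + \sum_k \alpha_k L_k.
\]
Parts (1) and (3) now follow by comparing the left-hand side of this chain to $2c$ and to $0$ respectively (the hypothesis of (1) is exactly $\sum_j \alpha_j L_j + \sum_k \alpha_k U_k > 2c$ after rearrangement, and similarly for (3)), and parts (2) and (4) follow symmetrically from the right-hand side. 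The link to $2c$ comes from the fact that $v,w \in L_c(x_1,\dots,x_n)$ gives $\sum_i \alpha_i\|v-x_i\| = \sum_i \alpha_i\|w-x_i\| = c$, which is what motivates the $2c$ appearing in the hypotheses of (1)--(2).

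The main obstacle is purely bookkeeping: one must be careful that flipping the sign of $\alpha_k$ flips which of $L_k, U_k$ is the ``largest possible estimate'' of $\alpha_k\|v+w-2x_k\|$, and correspondingly which direction the final inequality for $g$ points. I would closely follow the case-analysis template used in the proof of Theorem \ref{thm:theorem7}, replacing every occurrence of $\|z-x_i\|$ by $\|v-x_i\|$, of $\|y\|$ by $\|w-x_i\|$, and of $\langle z-x_i, y\rangle$ by $\langle v-x_i, w-x_i\rangle$, and then simply noting that $\sum_i \alpha_i\|v-x_i\| + \sum_i \alpha_i\|w-x_i\| = 2c$ wherever the earlier proof used $\sum_i \alpha_i \|z-x_i\| = c$. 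No new analytic content is required beyond what is already in Theorems \ref{thm:theorem16}--\ref{thm:theorem18}.
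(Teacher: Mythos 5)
Your proposal is correct and follows essentially the same route as the paper: the paper simply cites Theorem \ref{thm:theorem7} with $z=v$ and $y=w-x_i$, and what you write out is exactly that argument unfolded, i.e.\ the bounds of Theorem \ref{thm:theorem18} applied with $x=v-x_i$, $y=w-x_i$, the same sign bookkeeping on the $\alpha$'s, and the observation that $\sum_i\alpha_i\|w-x_i\|=c$ converts $c+g(\|y\|,\dots,\|y\|)$ into $2c$. Note only that your (correct) chain gives $g(\dots)\le 0$ in case (4), consistent with Theorem \ref{thm:theorem7}(4); the ``$\geq 0$'' printed in the statement of (4) appears to be a sign typo that neither your argument nor the paper's own citation of Theorem \ref{thm:theorem7} would establish.
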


\begin{proof}
    We apply Theorem \ref{thm:theorem7} with $z=v$ and $y = w-x_{i}$. Note that the fact that $y$ is not a constant vector does not affect the results obtained above, since after all, the Cauchy-Schwarz inequality will still hold and the terms

\[
\sum \alpha_{j} (\sqrt{\left \| w-x_{j} \right \|^{2} + \frac{(\left \| v-x_{j} \right \|^{2} + \left \langle v-x_{j}, w-x_{j} \right \rangle)^{2}}{\left \| v-x_{j} \right \|^{2}}})  
\] 
\[
+ \sum \alpha_{k} (\frac{\left \| v-x_{k} \right \|^{2} + \left \langle v-x_{k}, w-x_{k} \right \rangle}{\left \| v-x_{k} \right \|})
\]

\[
 \sum \alpha_{j} (\frac{\left \| v-x_{j} \right \|^{2} + \left \langle v-x_{j}, w-x_{j} \right \rangle}{\left \| v-x_{j} \right \|})  
\] 
\[
+ \sum \alpha_{k} (\sqrt{\left \| w-x_{k} \right \|^{2} + \frac{(\left \| v-x_{k} \right \|^{2} + \left \langle v-x_{k}, w-x_{k} \right \rangle)^{2}}{\left \| v-x_{k} \right \|^{2}}})
\]

    still approximate the overall change of the function.
\end{proof}

This theorem above helps us understand the behavior of the addition of two vectors in the subset of locus points in a vector space. Note that the answer itself is deeply interconnected with the value of our constant $c$ and that our focus points have been altered, too. We can see, for instance, that if \[
2c - \sum \alpha_{j} (\sqrt{\left \| w-x_{j} \right \|^{2} + \frac{(\left \| v-x_{j} \right \|^{2} + \left \langle v-x_{j}, w-x_{j} \right \rangle)^{2}}{\left \| v-x_{j} \right \|^{2}}})  
\] 
\[
- \sum \alpha_{k} (\frac{\left \| v-x_{j} \right \|^{2} + \left \langle v-x_{j}, w-x_{j} \right \rangle}{\left \| v-x_{j} \right \|})  > 0,
\] where all $\alpha_{j} > 0$ and all $\alpha_{k} < 0$, then we have that $L_{2c}(2x_{1},\dots,2x_{n})$ is the largest (by largest one understands the highest possible absolute value of $2c$ and the highest norm of $2x_{i}$) subset to which the addition $v+w$ can pertain to. 

\begin{example}

Consider $V = \mathbb{R}^3$ and we let our focus points to be $(2,3,-1)$, $(-1,3,-4)$, $(0,0,3)$, $c=15$ and define the equation as

\[
g(\left \| x-x_{1} \right \|, \left \| x-x_{2} \right \|, \left \| x-x_{3} \right \|)=  
\]
\[
\sqrt{(x-2)^2+(y-3)^2+(z+1)^2}+\sqrt{(x+1)^2+(y-3)^2+(z+4)^2}
\]
\[
+\sqrt{x^2+y^2+(z-3)^2}= 15.
\]

One can easily check that
\[
(3.8945, 1, -2), (1, 2, -5.001)
\]

are points that satisfy the equation above. Now that we have these points, we can move on to verify that

\[
\left \langle (3.8945, 1, -2) - (2,3,-1), (1, 2, -5.001) - (2,3,-1) \right \rangle \approx 4.1065 > 0,
\]
\[
\left \langle (3.8945, 1, -2) - (-1,3,-4), (1, 2, -5.001) - (-1,3,-4) \right \rangle \approx 9.787 > 0,
\]
\[
\left \langle (3.8945, 1, -2) - (0,0,3), (1, 2, -5.001) - (0,0,3) \right \rangle \approx 45.8995 > 0.
\]

We also note that

\[
2c - \sum_{j=1}^{3} \alpha_{j} (\sqrt{\left \| w-x_{j} \right \|^{2} + \frac{(\left \| v-x_{j} \right \|^{2} + \left \langle v-x_{j}, w-x_{j} \right \rangle)^{2}}{\left \| v-x_{j} \right \|^{2}}})  
\] 
\[ \approx 0.2449 > 0,
\]

Hence, we have that

\[
g(\sqrt{\left \langle v+w-2x_{1}, v+w-2x_{1} \right \rangle}, \sqrt{\left \langle v+w-2x_{n}, v+w-2x_{3} \right \rangle},
\]
\[
\sqrt{\left \langle v+w-2x_{3}, v+w-2x_{3} \right \rangle})) \leq  30.
\]

Indeed, we check that

\[
\sqrt{(4.8945-4)^2+(3-6)^2+(-7.001+2)^2}+\]\[\sqrt{(4.8945+2)^2+(3-6)^2+(-7.001+8)^2}
\]
\[
+\sqrt{4.8945^2+3^2+(-7.001-6)^2}= 27.6970 \leq 30
\]

\end{example}

We may now wish to generalize this to linear combinations.

\begin{theorem}
\label{thm:theorem10}
Let $v,w \in L_{c}(x_{1},\dots,x_{n})$ and $\beta, \gamma \in \mathbb{R}_{\geq 0}$. Then:

\begin{itemize}

\item[(1)] If

\[
(\gamma+\beta)c - \sum \alpha_{j} (\frac{\left \| \gamma(v-x_{j}) \right \|^{2} + \left \langle \gamma(v-x_{j}), \beta(w-x_{j}) \right \rangle}{\left \| \gamma(v-x_{j}) \right \|})  
\] 
\[
- \sum \alpha_{k} (\sqrt{\left \| \beta(w-x_{k}) \right \|^{2} + \frac{(\left \| \gamma(v-x_{k}) \right \|^{2} + \left \langle \gamma(v-x_{k}), \beta(w-x_{j}) \right \rangle)^{2}}{\left \|\gamma(v-x_{k}) \right \|^{2}}}) < 0,\]
where all $\alpha_{j} > 0$ and all $\alpha_{k} < 0$, then:

\[
g(\left \| \gamma v+\beta w-(\gamma +\beta)x_{1} \right \|, \dots, \left \| \gamma v+\beta w-(\gamma + \beta)x_{n} \right \|) \geq 
(\gamma + \beta)c. 
\]

\item[(2)] If 

\[
(\gamma + \beta)c - \sum \alpha_{j} (\sqrt{\left \| \beta(w-x_{j}) \right \|^{2} + \frac{(\left \| \gamma(v-x_{j}) \right \|^{2} + \left \langle \gamma(v-x_{j}), \beta(w-x_{j}) \right \rangle)^{2}}{\left \| \gamma(v-x_{j}) \right \|^{2}}})  
\] 
\[
- \sum \alpha_{k} (\frac{\left \| \gamma(v-x_{k}) \right \|^{2} + \left \langle \gamma(v-x_{k}), \beta(w-x_{k}) \right \rangle}{\left \| \gamma(v-x_{k}) \right \|})  > 0, \] where all $\alpha_{j} > 0$ and all $\alpha_{k} < 0$, then:

\[
g(\left \| \gamma v+\beta w-(\gamma +\beta)x_{1} \right \|, \dots, \left \| \gamma v+\beta w-(\gamma + \beta)x_{n} \right \|) \leq 
(\gamma + \beta)c. 
\]

\item[(3)] If 

\[
(\beta-\gamma)c + \sum \alpha_{j} (\frac{\left \| \gamma(v-x_{j}) \right \|^{2} + \left \langle \gamma(v-x_{j}), \beta(w-x_{j}) \right \rangle}{\left \| \gamma(v-x_{j}) \right \|})  
\] 
\[
+ \sum \alpha_{k} (\sqrt{\left \| \beta(w-x_{k}) \right \|^{2} + \frac{(\left \| \gamma(v-x_{k}) \right \|^{2} + \left \langle \gamma(v-x_{k}), \beta(w-x_{j}) \right \rangle)^{2}}{\left \| \gamma(v-x_{k}) \right \|^{2}}})  > 0,\]
where all $\alpha_{j} > 0$ and all $\alpha_{k} < 0$, then:

\[
g(\left \| \gamma v+\beta w-(\gamma +\beta)x_{1} \right \|, \dots, \left \| \gamma v+\beta w-(\gamma + \beta)x_{n} \right \|) \geq 
(\gamma - \beta)c.  
\]

\item[(4)] If \[
(\beta-\gamma)c + \sum \alpha_{j} (\sqrt{\left \| \beta(w-x_{j}) \right \|^{2} + \frac{(\left \| \gamma(v-x_{j}) \right \|^{2} + \left \langle \gamma(v-x_{j}), \beta(w-x_{j}) \right \rangle)^{2}}{\left \| \gamma(v-x_{j}) \right \|^{2}}})  
\] 
\[
+ \sum \alpha_{k} (\frac{\left \| \gamma(v-x_{k}) \right \|^{2} + \left \langle \gamma(v-x_{k}), \beta(w-x_{k}) \right \rangle}{\left \| \gamma(v-x_{k}) \right \|})  < 0, \]where 
 all $\alpha_{j} > 0$ and all $\alpha_{k} < 0$, then

\[g(\left \| \gamma v+\beta w-(\gamma +\beta)x_{1} \right \|, \dots, \left \| \gamma v+\beta w-(\gamma + \beta)x_{n} \right \|) \leq 
(\gamma - \beta)c. \]

\end{itemize}

\end{theorem}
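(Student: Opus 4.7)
My plan is to deduce this theorem from Theorem~\ref{thm:theorem7} by the same substitution technique used in the proof of Theorem~\ref{thm:theorem8}, combined with a linearity rescaling. The key observation is that for $\gamma \geq 0$ one has $\|\gamma v - \gamma x_i\| = \gamma\|v - x_i\|$, so that
\[
\sum_{i=1}^{n}\alpha_i\|\gamma v - \gamma x_i\| = \gamma c,
\]
meaning $\gamma v \in L_{\gamma c}(\gamma x_1,\dots,\gamma x_n)$. I will therefore apply Theorem~\ref{thm:theorem7} to this rescaled locus, taking $z := \gamma v$ and the index--dependent vector $y_i := \beta(w - x_i)$. As noted in the proof of Theorem~\ref{thm:theorem8}, letting $y$ vary with $i$ costs nothing, because the upper and lower bounds on $\|z + y - x_i\|$ furnished by Cauchy--Schwarz and by Theorem~\ref{thm:theorem18} are derived term by term. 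Under this substitution the input to $g$ becomes
\[
\|\gamma v + \beta(w - x_i) - \gamma x_i\| = \|\gamma v + \beta w - (\gamma + \beta)x_i\|,
\]
which is precisely the quantity appearing in the conclusion.

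Next I translate the hypotheses. Every occurrence of $\|z - x_j\|$ becomes $\|\gamma(v - x_j)\|$, every inner product $\langle z - x_j, y\rangle$ becomes $\langle \gamma(v - x_j), \beta(w - x_j)\rangle$, and similarly for the $k$-indexed terms. The constant $c$ appearing inside the conditions of Theorem~\ref{thm:theorem7} gets replaced by $\gamma c$, since the ambient locus is now $L_{\gamma c}(\gamma x_1,\dots,\gamma x_n)$. The summand $(\sum_i \alpha_i)\|y\|$, which in Theorem~\ref{thm:theorem7} equals $g(\|y\|,\dots,\|y\|)$ for constant $y$, generalizes for the index--dependent $y_i = \beta(w - x_i)$ to
\[
\sum_{i=1}^{n}\alpha_i\|\beta(w - x_i)\| \;=\; \beta\cdot g(\|w - x_1\|,\dots,\|w - x_n\|) \;=\; \beta c.
\]
Therefore parts (1) and (2) yield the bound $\gamma c + \beta c = (\gamma + \beta)c$, and parts (3) and (4) yield $\gamma c - \beta c = (\gamma - \beta)c$, matching the stated conclusions.

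The main obstacle is purely bookkeeping. For each of the four cases one must check that the choice of \emph{largest} versus \emph{smallest} estimate $\delta_i$ (which in the proof of Theorem~\ref{thm:theorem7} was dictated by the sign of $\alpha_j$ versus $\alpha_k$) is preserved under the substitution, and that the hypothesis $\gamma \geq 0$ ensures no inequality is reversed when a factor of $\gamma$ is pulled through $\|\gamma(v - x_i)\|$ in a denominator. Once these sign and orientation checks are confirmed, each of the four conditions of Theorem~\ref{thm:theorem10} is the literal image of the corresponding condition of Theorem~\ref{thm:theorem7} under $(z, y_i) = (\gamma v,\, \beta(w - x_i))$ relative to the foci $\gamma x_1,\dots,\gamma x_n$, so the four conclusions follow immediately.
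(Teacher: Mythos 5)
Your proposal is correct and follows essentially the same route as the paper: the paper likewise deduces the result by substituting $z=\gamma v$, $y=\beta(w-x_{i})$ into the earlier addition theorem (Theorem~\ref{thm:theorem8}, itself an instance of Theorem~\ref{thm:theorem7}), using linearity to note $\gamma v\in L_{\gamma c}(\gamma x_{1},\dots,\gamma x_{n})$ and that $\beta,\gamma\geq 0$ preserve the Cauchy--Schwarz sign conventions. Your write-up is in fact somewhat more explicit than the paper's about how the term $(\sum_{i}\alpha_{i})\left\| y\right\|$ becomes $\sum_{i}\alpha_{i}\beta\left\| w-x_{i}\right\| = \beta c$, which is the bookkeeping the paper leaves implicit.
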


\begin{proof}
    We apply Theorem \ref{thm:theorem8} with $z=\gamma v$, and $y = \beta(w-x_{i})$. We realize that, since our function $g$ is linear, and so is our inner product, then we have that $ \gamma v \in L_{\gamma c}(\gamma x_{1}, \dots, \gamma x_{n})$. 
    
    If $\beta > 0$, then indeed:

    \[
    -\left \|\gamma v-\gamma x_{i} \right \| \cdot \left \| \beta(w-x_{i}) \right \| \leq \left \langle \gamma v- \gamma x_{i}, \beta(w-x_{i}) \right \rangle \leq \left \|  \gamma v- \gamma x_{i} \right \| \cdot \left \| \beta(w-x_{i}) \right \| 
    \]

    However, if $\beta < 0$, then:

    \[
    -\left \| \gamma v- \gamma x_{i} \right \| \cdot \left \| \beta(w-x_{i}) \right \| \geq \left \langle \gamma v- \gamma x_{i}, \beta(w-x_{i}) \right \rangle \geq \left \| \gamma v- \gamma x_{i} \right \| \cdot \left \| \beta(w-x_{i}) \right \| 
    \]

For that reason, if $\beta < 0$, the inequality directions change. 
 However, if $\gamma < 0$, determining how the inequality is proves to be much harder, since this also affects the term we use to approximate. For that reason, we cannot determine how the inequalities are affected in the case where $\gamma < 0$.
\end{proof}

This last theorem tells us that we can also get some information about the location of a linear combination of two vectors in $L_{c}(x_{1},\dots,x_{n})$. Similarly, we can get better boundaries for our location in the case where $\left \langle \gamma(v-x_{i}), \beta(w-x_{i}) \right \rangle \leq 0$ for all $i$ by finding the largest of $(\gamma-\beta)c$ and $(\beta - \gamma)c$ when \[
(\beta-\gamma)c + \sum \alpha_{j} (\frac{\left \| \gamma(v-x_{j}) \right \|^{2} + \left \langle \gamma(v-x_{j}), \beta(w-x_{j}) \right \rangle}{\left \| \gamma(v-x_{j}) \right \|})  
\] 
\[
+ \sum \alpha_{k} (\sqrt{\left \| \beta(w-x_{k}) \right \|^{2} + \frac{(\left \| \gamma(v-x_{k}) \right \|^{2} + \left \langle \gamma(v-x_{k}), \beta(w-x_{j}) \right \rangle)^{2}}{\left \| \gamma(v-x_{k}) \right \|^{2}}})  > 0,\]
where all $\alpha_{j} > 0$ and all $\alpha_{k} < 0$ and by finding the smallest of those when \[
(\beta-\gamma)c + \sum \alpha_{j} (\sqrt{\left \| \beta(w-x_{j}) \right \|^{2} + \frac{(\left \| \gamma(v-x_{j}) \right \|^{2} + \left \langle \gamma(v-x_{j}), \beta(w-x_{j}) \right \rangle)^{2}}{\left \| \gamma(v-x_{j}) \right \|^{2}}})  
\] 
\[
+ \sum \alpha_{k} (\frac{\left \| \gamma(v-x_{k}) \right \|^{2} + \left \langle \gamma(v-x_{k}), \beta(w-x_{k}) \right \rangle}{\left \| \gamma(v-x_{k}) \right \|})  < 0, \]

\begin{example}

Consider the real inner product space $V=P_{1}(\mathbb{R})$ and define the inner product as

\[
\left \langle f(x), g(x) \right \rangle = \frac{1}{2\pi} \int_{0}^{2\pi} f(x) g(x) dx. 
\]

We will define our equation as follows:

\[
g(\left \| f(x) - x\right \|, \left \| f(x) + 2x\right \|) = 2\left \| f(x) - x\right \| - \left \| f(x) + 2x\right \| = 4,
\]
that is,
\[
2\sqrt{\frac{1}{2\pi} \int_{0}^{2\pi} (ax+b-x)^2 dx} - \sqrt{\frac{1}{2\pi} \int_{0}^{2\pi} (ax+b+2x)^2 dx} = 4.
\]

One can easily confirm that

\[
7.9x-19.3545 \quad \text{and} \quad 3x+7.9104
\]
are solutions to the equation above. Suppose that we now let $v=4x+4.3543$ and $w=3x+7.9104$. We will take $\gamma=2$ and $\beta = 3$. Firstly, we have to take the inner products

\[
\left \langle 2(v-x_{1}), 3(w-x_{1}) \right \rangle \approx 470.1877 > 0 \quad \text{and}
\]
\[
\left \langle 2(v-x_{2}), 3(w-x_{2}) \right \rangle \approx 2641.7982 > 0.
\]

We can now proceed to calculate the value of

\[
5c -  1 \cdot (\sqrt{\left \|3(w-x_{1}) \right \|^{2} + \frac{(\left \| 2(v-x_{1}) \right \|^{2} + \left \langle 2(v-x_{1}), 3(w-x_{1}) \right \rangle)^{2}}{\left \| 2(v-x_{1}) \right \|^{2}}})  
\] 
\[
- (-1) (\frac{\left \| 2(v-x_{2}) \right \|^{2} + \left \langle 2(v-x_{2}), 3(w-x_{2}) \right \rangle}{\left \| 2(v-x_{2}) \right \|}) \]
\[
\approx 0.1973 > 0
\]

Indeed, 

\[
g(\left \| 2v +3w -5x\right \|, \left \| 2v +3w +10x\right \|) \approx 5.1431 \leq (2+3)\cdot4
\]

\end{example}

This last theorem inspires us to try to find a more generalized statement about linear combinations in general inside the real inner product. This following theorem addresses this.

\begin{theorem}
    For any vectors $v_{1}, \dots, v_{m} \in L_{c}(x_{1}, \dots, x_{n})$ and any scalars $ \beta_{1}, \dots, \beta_{m} \in \mathbb{R}^{+}$, the following hold true:

    \begin{itemize}

\item[(1)] If

\[
(\beta_{1}+\dots+\beta_{m})c - \sum \alpha_{j} (\frac{\left \| \gamma(v-x_{j}) \right \|^{2} + \left \langle \gamma(v-x_{j}), \beta(w-x_{j}) \right \rangle}{\left \| \gamma(v-x_{j}) \right \|})  
\] 
\[
- \sum \alpha_{k} (\sqrt{\left \| \beta(w-x_{k}) \right \|^{2} + \frac{(\left \| \gamma(v-x_{k}) \right \|^{2} + \left \langle \gamma(v-x_{k}), \beta(w-x_{j}) \right \rangle)^{2}}{\left \|\gamma(v-x_{k}) \right \|^{2}}}) < 0,\]
where all $\alpha_{j} > 0$ and all $\alpha_{k} < 0$, then:

\[
g(\left \| \beta_{1} \cdot v_{1} + \dots + \beta_{n} \cdot v_{n}- (\sum_{j=1}^{m} \beta_{j}) x_{1} \right \|, \dots,\] 
\[ \left \| \beta_{1} \cdot v_{1} + \dots + \beta_{n} \cdot v_{n}- (\sum_{j=1}^{m} \beta_{j}) x_{n} \right \|) \geq 
(\beta_{1}+\dots+\beta_{m})c. 
\]

\item[(2)] If 

\[
(\beta_{1} + \dots +  \beta_{m})c -\] \[\sum \alpha_{j} (\sqrt{\left \| \beta(w-x_{j}) \right \|^{2} + \frac{(\left \| \gamma(v-x_{j}) \right \|^{2} + \left \langle \gamma(v-x_{j}), \beta(w-x_{j}) \right \rangle)^{2}}{\left \| \gamma(v-x_{j}) \right \|^{2}}})  
\] 
\[
- \sum \alpha_{k} (\frac{\left \| \gamma(v-x_{k}) \right \|^{2} + \left \langle \gamma(v-x_{k}), \beta(w-x_{k}) \right \rangle}{\left \| \gamma(v-x_{k}) \right \|})  > 0, \] where all $\alpha_{j} > 0$ and all $\alpha_{k} < 0$, then:

\[
g(\left \| \beta_{1} \cdot v_{1} + \dots + \beta_{n} \cdot v_{n}- (\sum_{j=1}^{m} \beta_{j}) x_{1} \right \|, \dots,\] 
\[ \left \| \beta_{1} \cdot v_{1} + \dots + \beta_{n} \cdot v_{n}- (\sum_{j=1}^{m} \beta_{j}) x_{n} \right \|) \leq 
(\beta_{1}+\dots+\beta_{m})c. 
\]

\end{itemize}

\end{theorem}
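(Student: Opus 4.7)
The plan is to proceed by induction on $m$, with Theorem \ref{thm:theorem10} serving as both the base case and the main engine of the inductive step. The case $m = 1$ is trivial since $g(\|\beta_1 v_1 - \beta_1 x_i\|, \dots) = \beta_1 g(\|v_1 - x_i\|, \dots) = \beta_1 c$ by linearity of $g$ and homogeneity of the norm (using $\beta_1 > 0$). The case $m = 2$ is Theorem \ref{thm:theorem10} applied with $\gamma = \beta_1$, $\beta = \beta_2$, $v = v_1$, and $w = v_2$.

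For the inductive step, write $B_{m-1} = \beta_1 + \dots + \beta_{m-1}$ and $S_{m-1} = \beta_1 v_1 + \dots + \beta_{m-1} v_{m-1}$, so that $\sum_{j=1}^m \beta_j v_j = S_{m-1} + \beta_m v_m$. I would then apply Theorem \ref{thm:theorem7} with $z = S_{m-1}$, $y = \beta_m(v_m - x_i)$, and with the foci rescaled to $B_{m-1} x_i$, in analogy with how Theorem \ref{thm:theorem10} was deduced from Theorem \ref{thm:theorem8}. The inductive hypothesis furnishes the partial bound $g(\|S_{m-1} - B_{m-1} x_i\|, \dots) \leq B_{m-1} c$ (or the corresponding $\geq$ bound for part (1)), which is precisely what we need to take the place of the equality $g(\|z - x_i\|, \dots) = c$ in the statement of Theorem \ref{thm:theorem7}. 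Combining this with $g(\|\beta_m(v_m - x_i)\|, \dots) = \beta_m c$ (which follows from $v_m \in L_c$ and homogeneity, since $\beta_m > 0$) and adding the two bounds should produce the desired $(\beta_1 + \dots + \beta_m)c$ on the right-hand side.

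The main obstacle will be justifying that a one-sided bound on $g$ for the partial sum $S_{m-1}$ can genuinely be substituted for the equality $g(\cdot) = c$ that Theorem \ref{thm:theorem7} assumes about the vector $z$. Fortunately, inspection of the proof of Theorem \ref{thm:theorem7} shows that the only place $c$ enters is through $g(\|z - x_i\|, \dots) = c$ being added to the one-sided estimates coming from the triangle inequality, Cauchy-Schwarz, and Theorem \ref{thm:theorem18}; since those estimates all point in a single direction, replacing the equality by $\leq c'$ preserves the $\leq$ conclusion (and dually for $\geq$). A secondary concern is purely notational: the conditions displayed in parts (1) and (2) of the present theorem are copied verbatim from the two-vector case and still reference $\gamma(v - x_j)$ and $\beta(w - x_j)$. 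To make the hypotheses coherent for general $m$, they should be reinterpreted as the natural sums of pairwise cross terms $\sum_{j < k} \beta_j \beta_k \langle v_j - x_i, v_k - x_i \rangle$ that appear when one expands $\|\sum_{j=1}^m \beta_j (v_j - x_i)\|^2$, and the inductive argument should be phrased with these corrected conditions in mind.
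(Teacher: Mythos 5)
Your route is genuinely different from the paper's, and it has a real gap. The paper does not induct on $m$: it expands $\bigl\| \beta_{1}v_{1}+\dots+\beta_{m}v_{m}-(\sum_{j}\beta_{j})x_{i}\bigr\|^{2}$ as a double sum of cross terms, applies Cauchy--Schwarz to obtain the triangle-type bound by $\beta_{1}\|v_{1}-x_{i}\|+\dots+\beta_{m}\|v_{m}-x_{i}\|$, uses the exact locus property of \emph{every} $v_{j}$ (so that $\sum_{i}\alpha_{i}\sum_{j}\beta_{j}\|v_{j}-x_{i}\| = (\beta_{1}+\dots+\beta_{m})c$ is known exactly), and then brackets the single quantity $\|\beta_{1}v_{1}+\dots+\beta_{m}v_{m}-(\sum_{j}\beta_{j})x_{i}\|$ between a one-term lower bound and the nested upper bound $\Upsilon$ obtained by iterating Theorem \ref{thm:theorem18}; the direction of the inequality is then settled by a \emph{single} top-level sign condition, exactly as in the final step of the proof of Theorem \ref{thm:theorem7}.

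The gap in your plan is that the inductive hypothesis you want to ``furnish'' is itself conditional. Theorem \ref{thm:theorem10}, and the statement being proved, have the form ``if a sign condition on the vectors holds, then the bound holds.'' At your inductive step (writing $S_{m-1}=\beta_{1}v_{1}+\dots+\beta_{m-1}v_{m-1}$ and $B_{m-1}=\beta_{1}+\dots+\beta_{m-1}$ as you do) you need $g(\|S_{m-1}-B_{m-1}x_{1}\|,\dots)\le B_{m-1}c$ (resp.\ $\ge$), and that is available only if the corresponding sign condition holds for the partial combination $S_{m-1}$ --- and so on down through every level to the base case. None of these lower-level conditions is implied by the one hypothesis of the theorem, however it is reinterpreted; so your induction establishes a statement whose hypotheses are a cascade of $m-1$ sign conditions on all partial sums, a materially weaker theorem than the paper's, whose argument needs only one condition precisely because it never trades the exact equalities $\sum_{i}\alpha_{i}\|v_{j}-x_{i}\|=c$ for a one-sided estimate on a partial combination. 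Relatedly, the ``corrected hypotheses'' you propose, phrased in the symmetric pairwise cross terms $\sum_{j<k}\beta_{j}\beta_{k}\langle v_{j}-x_{i},v_{k}-x_{i}\rangle$, are not the conditions your own induction consumes: the step with $z=S_{m-1}$ and $y=\beta_{m}(v_{m}-x_{i})$ requires conditions in the nested quantities $\|S_{m-1}-B_{m-1}x_{i}\|$ and $\langle S_{m-1}-B_{m-1}x_{i},\,\beta_{m}(v_{m}-x_{i})\rangle$, which is essentially how the paper's $\Upsilon$-condition is built. Your directional observation --- that a one-sided bound may replace the equality in Theorem \ref{thm:theorem7} when it points the same way as the desired conclusion --- is sound; what is missing is an account of where the hypotheses licensing each such application are supposed to come from.
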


\begin{proof}
   The proof of this theorem is similar to the proof of Theorem \ref{thm:theorem7}, although we do have to consider the fact that we are adding multiple vectors. Firstly, we have to expand and simplify the real inner product of a linear combination of vectors. For this, we will use the properties of real inner product spaces.:

   \[
   \left \langle \beta_{1} \cdot v_{1} + \dots + \beta_{n} \cdot v_{n}- (\sum_{j=1}^{m} \beta_{j}) x_{k}, \beta_{1} \cdot v_{1} + \dots + \beta_{n} \cdot v_{n}-(\sum_{j=1}^{m} \beta_{j}) x_{k} \right \rangle =
   \]
   \[
   \beta_{1} \left \langle v_{1} - x_{k}, \beta_{1} \cdot v_{1} + \dots + \beta_{n} \cdot v_{n} -(\sum_{j=1}^{m} \beta_{j}) x_{k} \right \rangle 
   \]
   \[
   +  \left \langle \beta_{2} \cdot v_{2} + \dots + \beta_{n} \cdot v_{n} - (\sum_{j=2}^{m} \beta_{j})x_{k}, \beta_{1} \cdot v_{1} + \dots + \beta_{n} \cdot v_{n} - (\sum_{j=1}^{m} \beta_{j})x_{k}\right \rangle = 
   \]
   \[
   \beta_{1}(\beta_{1}\left \langle v_{1}-x_{i}, v_{1} -x_{i}\right \rangle 
   \]
   \[
   + \left \langle \beta_{2} \cdot v_{2} + \dots + \beta_{n} \cdot v_{n} - (\sum_{j=2}^{m} \beta_{j})x_{k} , \beta_{2} \cdot v_{2} + \dots + \beta_{n} \cdot v_{n} - (\sum_{j=2}^{m} \beta_{j})x_{k} \right \rangle) +
   \]
   \[
   \left \langle \beta_{2} \cdot v_{2} + \dots + \beta_{n} \cdot v_{n} -(\sum_{j=2}^{m} \beta_{j})x_{k}, \beta_{1} \cdot v_{1} + \dots + \beta_{n} \cdot v_{n} - (\sum_{j=2}^{m} \beta_{j}) x_{k} \right \rangle
   \]

If we continue this process, we will get that the value the inner product above can be expanded and simplified to obtain:

   \[
   \left \langle \beta_{1} \cdot v_{1} + \dots + \beta_{n} \cdot v_{n}- (\sum_{j=1}^{m} \beta_{j})x_{k}, \beta_{1} \cdot v_{1} + \dots + \beta_{n} \cdot v_{n}-(\sum_{j=1}^{m} \beta_{j})x_{k} \right \rangle =
   \]
   \[
   \sum_{i=1}^{m} \sum_{j=1}^{m} \binom{2}{d} \beta_{i} \cdot \beta_{j} \left \langle v_{i}-x_{k}, v_{j} -x_{k}\right \rangle,
   \]where $d=0$ if $i=j$ and $d=1$ otherwise. 

Of course, the Cauchy-Schwarz inequality can still be applied in this case, and since we are assuming that $\beta_{1}, \dots, \beta_{m} > 0$, then:

\[
| \beta_{i} \cdot \beta_{j} \left \langle v_{i}-x_{k}, v_{j} -x_{k}\right \rangle | \leq \beta_{i} \cdot \beta_{j} \left \| v_{i}-x_{k} \right \| \left \| v_{j}-x_{k} \right \|
\]

This, of course, means that the following inequality can be found for the sum above:

\[
    \sum_{i=1}^{m} \sum_{j=1}^{m} \binom{2}{d} \beta_{i} \cdot \beta_{j} \left \langle v_{i}-x_{k}, v_{j} -x_{k}\right \rangle \leq \sum_{i=1}^{m} \sum_{j=1}^{m} \binom{2}{d} \beta_{i} \cdot \beta_{j} \left \| v_{i}-x_{k} \right \| \left \| v_{j}-x_{k} \right \|
\]

But of course, one can realize that the latter term can be expanded as:

\[
\sum_{i=1}^{m} \sum_{j=1}^{m} \binom{2}{d} \beta_{i} \cdot \beta_{j} \left \| v_{i} -x_{k}\right \| \left \| v_{j}-x_{k} \right \| =\] \[\beta_{1}^{2}\left\|v_{1}-x_{k}\right \|^{2} + 2\beta_{1} \cdot \beta_{2} \left \| v_{1} -x_{k}\right \| \left \| v_{2}-x_{k} \right \| + \dots 
\]
\[
+ 2\beta_{m} \cdot \beta_{m-1} \left \| v_{m} -x_{k}\right \| \left \| v_{m-1}-x_{k} \right \| + \beta_{m}^{2} \left \|v_{m} -x_{k}\right \|^{2}
\]
\[
 =(\beta_{1}\left\|v_{1}-x_{k}\right\|+\dots +\beta_{m}\left\|v_{m}-x_{k}\right\|)^{2}
\]

This means that:
   \[
   \left \langle \beta_{1} \cdot v_{1} + \dots + \beta_{n} \cdot v_{n}- (\sum_{j=1}^{m} \beta_{j}) x_{k}, \beta_{1} \cdot v_{1} + \dots + \beta_{n} \cdot v_{n}-(\sum_{j=1}^{m} \beta_{j}) x_{k} \right \rangle \leq
   \]
\[
(\beta_{1}\left\|v_{1}-x_{k}\right\|+\dots +\beta_{m}\left\|v_{m}-x_{k}\right\|)^{2}
\]

With this in mind, we can conclude that:

   \[
   \left \| \beta_{1} \cdot v_{1} + \dots + \beta_{n} \cdot v_{n}- (\sum_{j=1}^{m} \beta_{j}) x_{k} \right \| \leq
   \]
\[
\beta_{1}\left\|v_{1}-x_{k}\right\|+\dots +\beta_{m}\left\|v_{m}-x_{k}\right\|
\]

Knowing this, the remainder of the proof is remarkably similar to the proof of Theorem \ref{thm:theorem7}. We have to beware of the possibility that $\alpha_{k} < 0$, since that would change the direction of the inequality. However, keeping this in mind, finding the results stated above is very easy. Of course, in order to know the change in the overall function $g$, we have to consider the value of:
\[
\sum_{i=1}^{n}(\alpha_{i}(\beta_{1}(v_{1}-x_{i})+\dots + \beta_{m}(v_{m}-x_{i}))-\alpha_{i}\left \| \beta_{1}v_{1}+\dots + \beta_{m}v_{m}-(\beta_{1}+\dots + \beta_{m})x_{i}\right\|),
\]
but since $v_{1},\dots,v_{m}\in L_{c}(x_{1},\dots,x_{n})$, then the equation above simplifies to:
\[
(\beta_{1}+\dots+\beta_{m})c - \sum_{i=1}^{n}\alpha_{i}\left \| \beta_{1}v_{1}+\dots + \beta_{m}v_{m}-(\beta_{1}+\dots + \beta_{m})x_{i}\right\|
\]

Of course, we could calculate the exact value of \[\left \| \beta_{1}v_{1}+\dots + \beta_{m}v_{m}-(\beta_{1}+\dots + \beta_{m})x_{i}\right\|,\] but that would only tell us the exact answer that we are looking for, rather than a boundary. For that reason, we will make use of the formula we derived above to calculate the length as we saw above in order to approximate the length of this side. We know that:

\[
\beta_{1}v_{1}+\dots + \beta_{m}v_{m}-(\beta_{1}+\dots + \beta_{m})x_{i} = \beta_{2}v_{2}+\dots + \beta_{m}v_{m}-(\beta_{2}+\dots + \beta_{m})x_{i} + \beta_{1}(v_{1}-x_{i})
\]

Using the formula for length we derived above, we have that:

\[
\left \| \beta_{1}v_{1}+\dots + \beta_{m}v_{m}-(\beta_{1}+\dots + \beta_{m})x_{i} \right \|^{2} =
\]
\[
\left \| \beta_{2}v_{2}+\dots + \beta_{m}v_{m}-(\beta_{2}+\dots + \beta_{m})x_{i} \right \|^{2} \cdot
\]
\[
\sin^{2}(\arccos(\frac{\left\langle \beta_{2}v_{2}+\dots + \beta_{m}v_{m}-(\beta_{2}+\dots + \beta_{m})x_{i}, \beta_{1}(v_{1}-x_{i})\right \rangle }{\| \beta_{2}v_{2}+\dots + \beta_{m}v_{m}-(\beta_{2}+\dots + \beta_{m})x_{i}\| \| \beta_{1}(v_{1}-x_{i})\|}))
\]
\[
+ \frac{(\|\beta_{1}(v_{1}-x_{i})\|^{2}+\left \langle \beta_{2}v_{2}+\dots + \beta_{m}v_{m}-(\beta_{2}+\dots + \beta_{m})x_{i}, \beta_{1}(v_{1}-x_{i}) \right \rangle )^{2}}{\|\beta_{1}(v_{1}-x_{i})\|^{2}}
\]
 This process can be repeated in order to calculate the value of \[\| \beta_{2}v_{2}+\dots + \beta_{m}v_{m}-\sum_{k=2}^{m}\beta_{k}x_{i}\|,\] which we will find to be equal to:

 \[
\left \| \beta_{2}v_{2}+\dots + \beta_{m}v_{m}-\sum_{k=2}^{m}\beta_{k}x_{i} \right \|^{2} =
\]
\[
\left \| \beta_{3}v_{3}+\dots + \beta_{m}v_{m}-\sum_{k=3}^{m}\beta_{k}x_{i} \right \|^{2} \cdot
\]
\[
\sin^{2}(\arccos(\frac{\left\langle \beta_{3}v_{3}+\dots + \beta_{m}v_{m}-\sum_{k=3}^{m}\beta_{k}x_{i}, \beta_{1}(v_{1}-x_{i})\right \rangle }{\| \beta_{3}v_{3}+\dots + \beta_{m}v_{m}-\sum_{k=3}^{m}\beta_{k}x_{i}\| \| \beta_{2}(v_{2}-x_{i})\|}))
\]
\[
+ \frac{(\|\beta_{2}(v_{2}-x_{2})\|^{2}+\left \langle \beta_{3}v_{3}+\dots + \beta_{m}v_{m}-\sum_{k=3}^{m}\beta_{k}x_{i}, \beta_{2}(v_{2}-x_{i}) \right \rangle )^{2}}{\|\beta_{2}(v_{2}-x_{i})\|^{2}}
\]
We can continue doing this process iteratively for all the vectors we can, and finally we obtain the following formula for the length of the initial vector. This process is something that we will not include due to space constraints, but the calculation of the length can be easily expanded. For now, we may want to focus on the inequalities that arise from using this formula. Of course, $0 \leq \sin^{2}(\theta) \leq 1 $, and all other terms used in the computation of the length are positive. For that reason, the inequality can be easily formulated as:

\[
\frac{(\|\beta_{1}(v_{1}-x_{i})\|^{2}+\left \langle \beta_{2}v_{2}+\dots + \beta_{m}v_{m}-\sum_{k=2}^{m}\beta_{k}x_{i}, \beta_{1}(v_{1}-x_{i}) \right \rangle )^{2}}{\|\beta_{1}(v_{1}-x_{i})\|^{2}} \leq
\]
\[
\left \| \beta_{1}v_{1}+\dots + \beta_{m}v_{m}-(\beta_{1}+\dots + \beta_{m})x_{i} \right \|^{2} \leq
\]
\[
\sum_{j=1}^{m-1} (\frac{(\|\beta_{j}(v_{j}-x_{i})\|^{2}+\left \langle \beta_{j+1}v_{j+1}+\dots + \beta_{m}v_{m}-\sum_{k=j+1}^{m}\beta_{k}x_{i}, \beta_{j}(v_{j}-x_{i}) \right \rangle )^{2}}{\|\beta_{j}(v_{j}-x_{i})\|^{2}})
\]
\[
+ \| \beta_{m}(v_{m}-x_{i}) \| = \Upsilon
\]

With these inequalities in mind and noting that $\Upsilon$ is the upper bound for the length (we set the upper bound equal to $\Upsilon$ due to space constraints), we can now make use of the last part of the proof of Theorem \ref{thm:theorem7} in order to determine the direction of the inequality. Making use of the last part of said theorem, we have that if:

\[
(\beta_{1}+\dots+\beta_{m})c
\]
\[
- \sum \alpha_{j} (\frac{(\|\beta_{1}(v_{1}-x_{j})\|^{2}+\left \langle \beta_{2}v_{2}+\dots + \beta_{m}v_{m}-(\beta_{2}+\dots + \beta_{m})x_{j}, \beta_{1}(v_{1}-x_{i}) \right \rangle )}{\|\beta_{1}(v_{1}-x_{j})\|})  
\] 
\[
- \sum \alpha_{k} (\sqrt{
\Upsilon
}) < 0,\]
where all $\alpha_{j} > 0$ and all $\alpha_{k} < 0$, then we conclude that:

\[
g(\left \| \beta_{1} \cdot v_{1} + \dots + \beta_{n} \cdot v_{n}- (\sum_{j=1}^{m} \beta_{j}) x_{1} \right \|, \dots,\] 
\[ \left \| \beta_{1} \cdot v_{1} + \dots + \beta_{n} \cdot v_{n}- (\sum_{j=1}^{m} \beta_{j}) x_{n} \right \|) \geq 
(\beta_{1}+\dots+\beta_{m})c. 
\]

Similarly, if we have that:

\[
(\beta_{1} + \dots +  \beta_{m})c - \sum \alpha_{j} (\sqrt{\Upsilon})  
\] 
\[
- \sum \alpha_{k} (\frac{(\|\beta_{1}(v_{1}-x_{k})\|^{2}+\left \langle \beta_{2}v_{2}+\dots + \beta_{m}v_{m}-(\beta_{2}+\dots + \beta_{m})x_{k}, \beta_{1}(v_{1}-x_{i}) \right \rangle )}{\|\beta_{1}(v_{1}-x_{k})\|})\]  \[> 0, \] where all $\alpha_{j} > 0$ and all $\alpha_{k} < 0$, then:

\[
g(\left \| \beta_{1} \cdot v_{1} + \dots + \beta_{n} \cdot v_{n}- (\sum_{j=1}^{m} \beta_{j}) x_{1} \right \|, \dots,\] 
\[ \left \| \beta_{1} \cdot v_{1} + \dots + \beta_{n} \cdot v_{n}- (\sum_{j=1}^{m} \beta_{j}) x_{n} \right \|) \leq 
(\beta_{1}+\dots+\beta_{m})c. 
\]

\end{proof}

\begin{remark}
    It is important to note that the distinction that was mad in the hypothesis of this theorem, namely, that $\beta_{1}, \dots, \beta_{m} > 0$ was done in order to simplify the proof of the theorem. In fact, there is no restriction on what the values of $\beta_{k}$ can be. The only different that there would be in such cases is that the boundary term $(\beta_{1}+\dots+\beta_{m})c$ would become $(\beta_{1}+\dots+\beta_{v}-\beta_{v+1}-\dots-\beta_{m})$, where $\beta_{1},\dots \beta_{v} > 0$ and $\beta_{v+1}, \dots, b_{m} < 0$. This change in the boundary depending on the sing of the coefficient $\beta_{k}$ is due to the fact that, if $\beta_{l} \cdot \beta_{j} < 0$, where $\beta_{j} < 0, \beta_{l} > 0$, then:
    \[
    -\beta_{l}\beta_{j}\| v_{l}-x_{i}\| \|v_{j}-x_{i} \| \geq \beta_{l}\beta_{j}\left \langle v_{l}-x_{i}, v_{j}-x_{i} \right \rangle \geq \beta_{l}\beta_{j}\| v_{l}-x_{i}\| \|v_{j}-x_{i} \|
    \]
\end{remark}

\pagebreak

\section{Isomorphism \begin{math}V \cong \mathbb{R}^n \end{math}}

One of the nicest results in Linear Algebra is the fact that if $V$ is a $n$-dimensional vector space over $\mathbb{R}$, then $V$ is isomorphic to $\mathbb{R}^n$. One can always pick such an isomorphism to be $\phi_\beta \colon V \to \mathbb{R}^{n}$, where $\phi_\beta(x)=[x]_\beta$ with $\beta$ an ordered basis of $V$. The vector $[x]_\beta$ gathers the coordinates of $x$ with respect to $\beta$. For instance, $V=P_{2}(\mathbb{R})$ and $\beta=\{ 1, x, x^2\}$, then $\phi_\beta(ax^2+bx+c)=(c,b,a).$

This will prove to be incredibly helpful, since we will be able to use many of the properties that we previously discussed vector spaces in  $\mathbb{R}^{n}$. For example, one can consider $V=P_{1}(\mathbb{R})$, the inner product

\[
\left \langle f(x), g(x) \right \rangle = \frac{1}{2\pi} \int_{0}^{2\pi} f(x) g(x) dx 
\]
and the equation
\[
g(\left \| f(x) - x\right \|, \left \| f(x) + 2x\right \|) = 2\left \| f(x) - x\right \| - \left \| f(x) + 2x\right \| =
\]
\[
2\sqrt{\frac{1}{2\pi} \int_{0}^{2\pi} (ax+b-x)^2 dx} - \sqrt{\frac{1}{2\pi} \int_{0}^{2\pi} (ax+b+2x)^2 dx} = 4.
\]

We choose the ordered basis $\beta=\{1,x\}$. Note that $[ax+b]_{\beta} = (b,a)$. Firstly, we will try to visualize the locus points that we defined above as polynomials in $P_{1}(\mathbb{R})$ and then we will visualize them on the Cartesian plane in $\mathbb{R}^{2}$.

One can easily obtain some functions in $P_{1}(\mathbb{R})$ using the equation above. We can graph these functions and their focus points on the Cartesian plane as shown in Figure \ref{fig:graph37}. One feels that there does not seem to be a pattern between the locus points that we have graphed.

\begin{figure}
\centering
\includegraphics[width=0.75\textwidth]{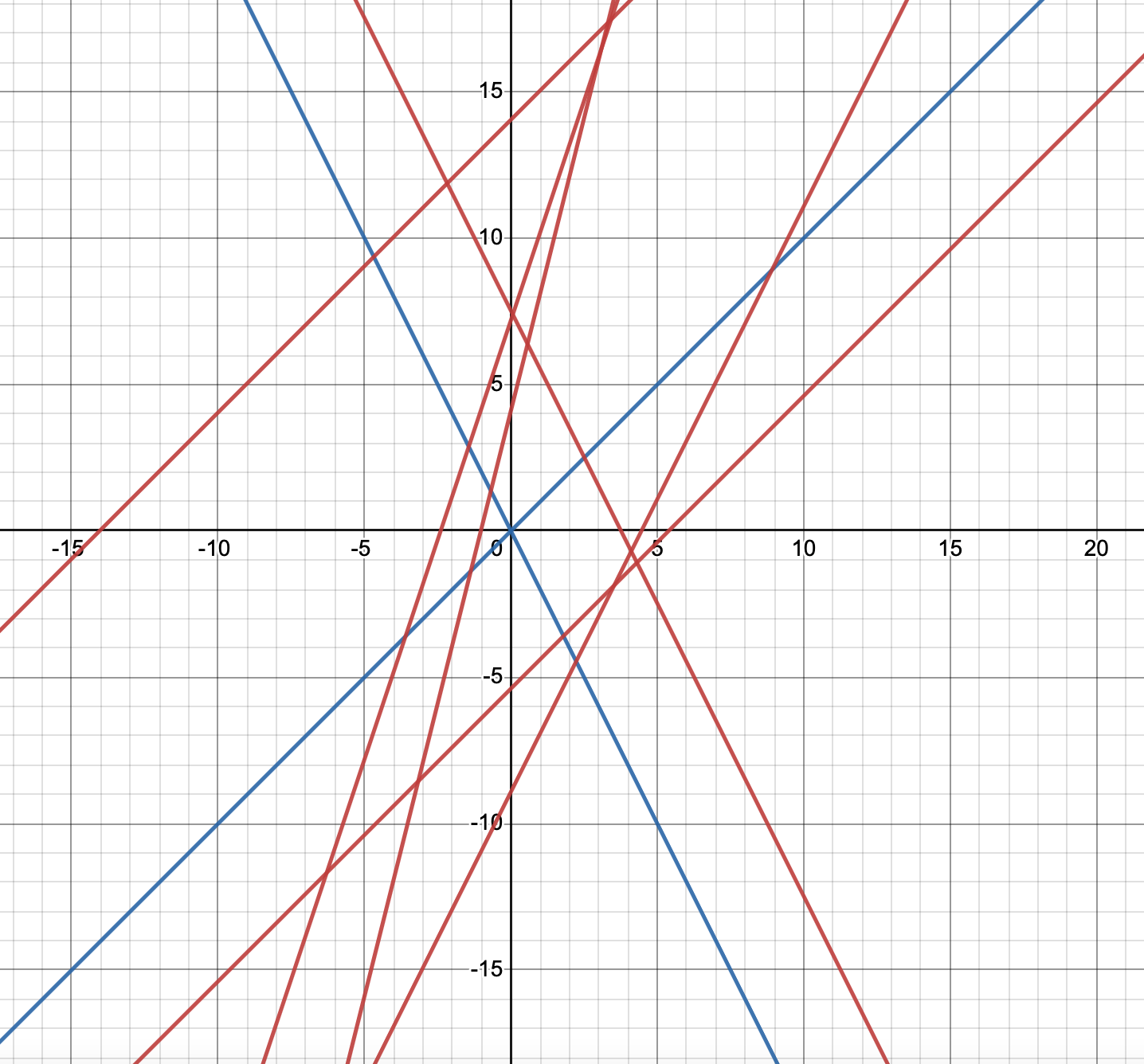}
\caption{\label{fig:graph37}Graph of some locus for the equation $2 \| f(x) - x \| - \| f(x) + 2x \| = 4$ and vector space $V=P_{1} (\mathbb{R})$ }
\end{figure}

\begin{example}

Before we step ahead and look into the isomorphism between $V$ and $\mathbb{R}^2$, we consider the vector space $\mathbb{R}^{2}$ using the same equation and its standard definition of inner product. We let the focus points to be the coordinates on the Cartesian plane of the focus points of the equation above, so the equation is 

\[
2\sqrt{x^2+(y-1)^2} - \sqrt{x^2+(y+2)^2} = 4.
\]

The graph for these locus points can be seen in Figure \ref{fig:graph38}. These locus points do not, in the slightest, resemble the ones we obtained before. Of course, one must consider that we are not applying the same inner product to both vector spaces and that the vector spaces are not the same to the one in \ref{fig:graph37}.

\begin{figure}
\centering
\includegraphics[width=0.75\textwidth]{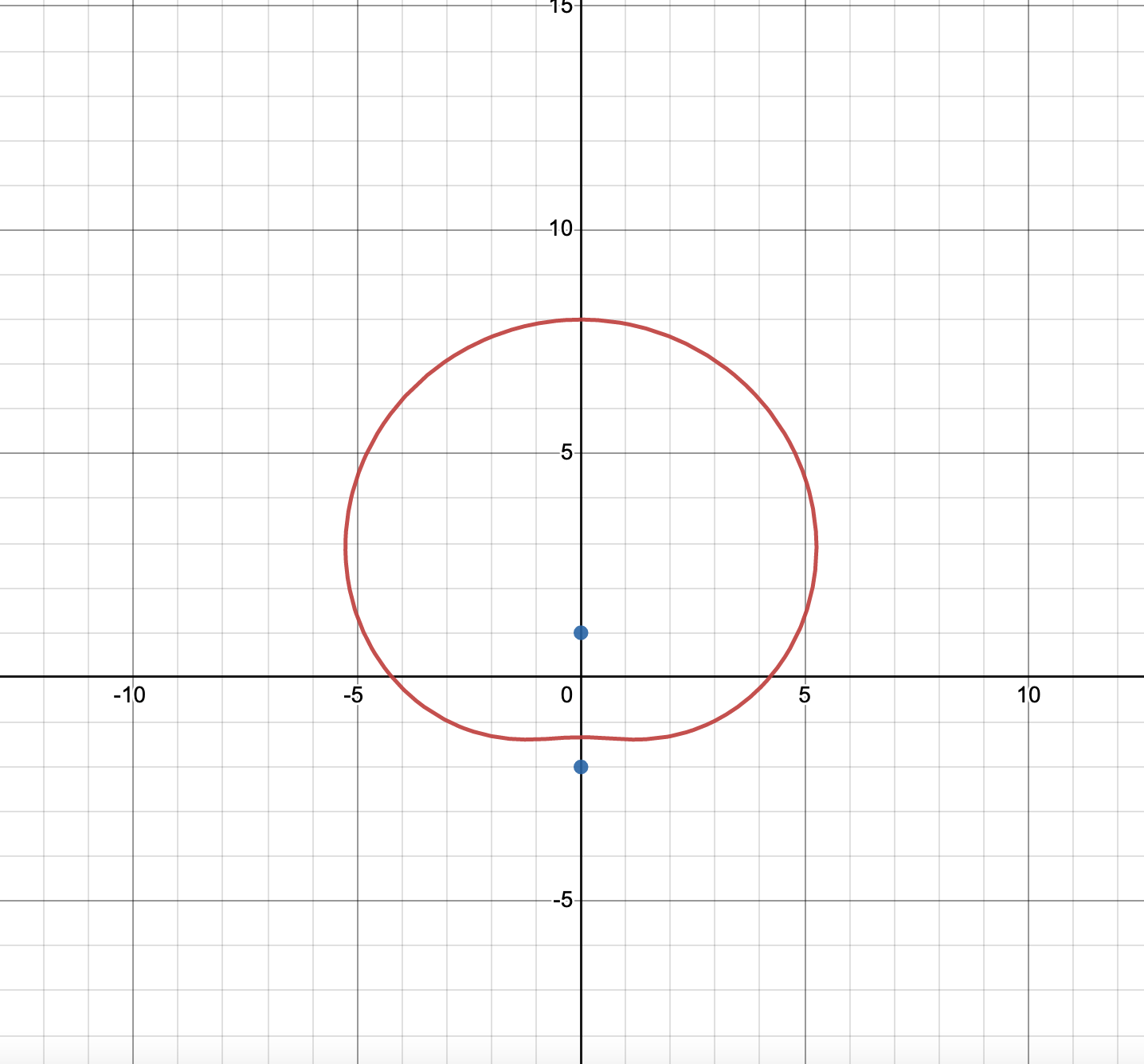}
\caption{\label{fig:graph38}Graph of the equivalent function in the vector space $V=\mathbb{R}^2$ and $2\sqrt{x^2+(y-1)^2} - \sqrt{x^2+(y+2)^2} = 4$}
\end{figure}

\end{example}

Now that we have seen a representation of the locus points by graphing the vectors in our vector space, we move on to consider the actual isomorphism between $V=P_{1}(\mathbb{R})$ and $\mathbb{R}^2$. This property proves to be incredibly helpful, since one can easily find elements of the locus in the vector space by simply inverting the isomorphism from $\mathbb{R}^{2}$ to $P_{1}(\mathbb{R})$. Not only that, but we can also work with the focus points. In order to obtain this isomorphism, we recall that $[ax+b]_{\beta} = (b,a)$. We then make use of the definition we have given to the function $g$ in order to obtain that

\[
2\sqrt{\frac{1}{2\pi} \int_{0}^{2\pi} (ax+b-x)^2 dx} - \sqrt{\frac{1}{2\pi} \int_{0}^{2\pi} (ax+b+2x)^2 dx} = 4.
\]

We can solve the integrals and leave them in terms of $a, b$ and we obtain that:

\[
\sqrt{\frac{2}{3\pi}}\sqrt{\frac{(2\pi(a-1)+b)^3-b^3}{a-1}} - \frac{\sqrt{\frac{(2\pi(a+2)+b)^3-b^3}{a+2}}}{\sqrt{6\pi}} = 4.
\]

Since we know that the isomorphism is $\phi_\beta \colon ax+ b \to (b,a) $, then we have that the equation translates to

\[
\sqrt{\frac{2}{3\pi}}\sqrt{\frac{(2\pi(y-1)+x)^3-x^3}{y-1}} - \frac{\sqrt{\frac{(2\pi(y+2)+x)^3-x^3}{y+2}}}{\sqrt{6\pi}} = 4.
\]

This is graphed in Figure \ref{fig:graph39}. The locus points that we obtain are very dissimilar from the ones we observe in Figure \ref{fig:graph38}. Evidently, this latter equation is quite different from the one we obtained by merely replicating the original equation $g$ and simply changing the vector space it applies to. This explains the differences between both locus points when both are mapped in the $\mathbb{R}^2$ plane. 
\begin{figure}
\centering
\includegraphics[width=0.75\textwidth]{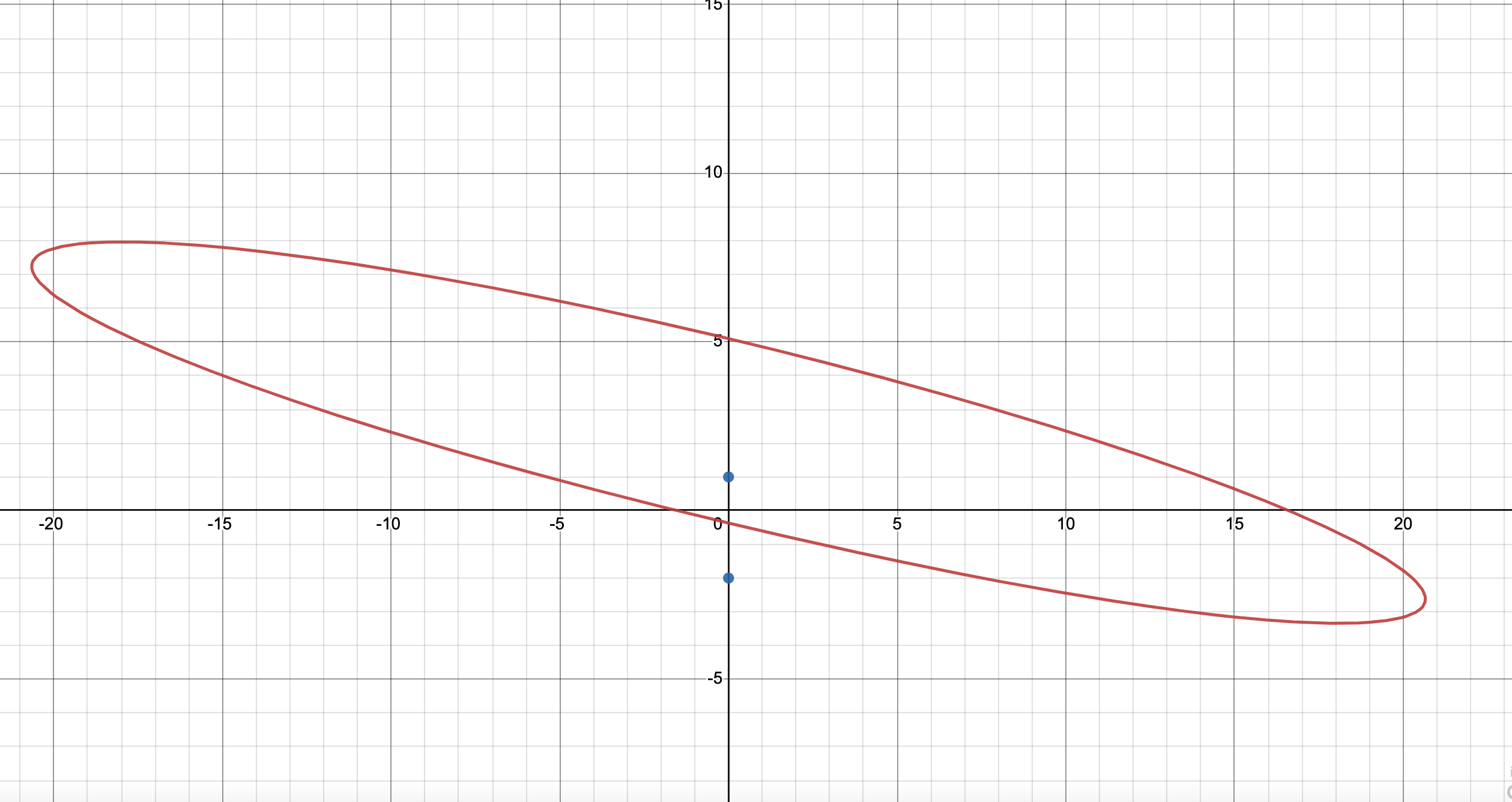}
\caption{\label{fig:graph39} Graph of the locus points of $2\left \| f(x) - x\right \| - \left \| f(x) + 2x\right \| = 4$ and vector space $V=P_{1}(\mathbb{R})$ after the isomorphism to $\mathbb{R}^2$ has been applied}
\end{figure}

Though this method does work, one could also work with the fact that the inner products in both vector spaces are defined differently and so, finding the equivalent inner product in our vector space $V$ on $\mathbb{R}^n$, can also help us find the equation. In this case one has that

\[
\left \langle (a_{1},a_{2}), (b_{1},b_{2}) \right \rangle = \frac{1}{2\pi} \int_{0}^{2\pi} (a_{2}x+a_{1})(b_{2}x+b_{1}) dx =
\]
\[
\frac{\frac{8}{3}\pi^{3}a_{2}b_{2}+2\pi^{2}a_{2}b_{1}+2\pi^{2}b_{2}a_{1}+2\pi b_{1}a_{1}}{2\pi}.
\]

This mapping of the inner product that we defined for $P_{1}(\mathbb{R})$ is evidently not the same as the standard definition of inner product in $\mathbb{R}^{2}$. However, if we now define our inner product in $\mathbb{R}^2$ as above, then we have that the norm of a vector is:

\[
\sqrt{\left \langle (a_{1},a_{2}), (a_{1},a_{2}) \right \rangle} = \sqrt{\frac{\frac{8}{3}\pi^{3}a_{2}^{2}+4\pi^{2}a_{2}a_{1} +2\pi a_{1}^{2}}{2\pi}}.
\]

If we define that to be the norm of a vector in $\mathbb{R}^2$ and we let the focus points to be the mappings on $\mathbb{R}^2$, then we have that the equation is

\[
g(\left \| x - x_{1} \right \|,\left \| x - x_{2} \right \| ) = 2\left \| x - x_{1} \right \| - \left \| x - x_{2} \right \| =
\]
\[
2  \sqrt{\frac{\frac{8}{3}\pi^{3}(y-1)^{2}+4\pi^{2}(y-1)(x) +2\pi x^{2}}{2\pi}} - \sqrt{\frac{\frac{8}{3}\pi^{3}(y+2)^{2}+4\pi^{2}(y+2)(x) +2\pi x^{2}}{2\pi}} = 4.
\]

The graph of this equation can be seen in Figure \ref{fig:graph40}. Indeed, one can easily confirm that the graphs of both equations, that is, the graph of the isomorphism of the locus points to $\mathbb{R}^2$ and the graph of the locus points using our modified definition of the norm.

The proof that both applying the isomorphism to the individual points separately and to the inner product is very straightforward.

\begin{figure}
\centering
\includegraphics[width=0.75\textwidth]{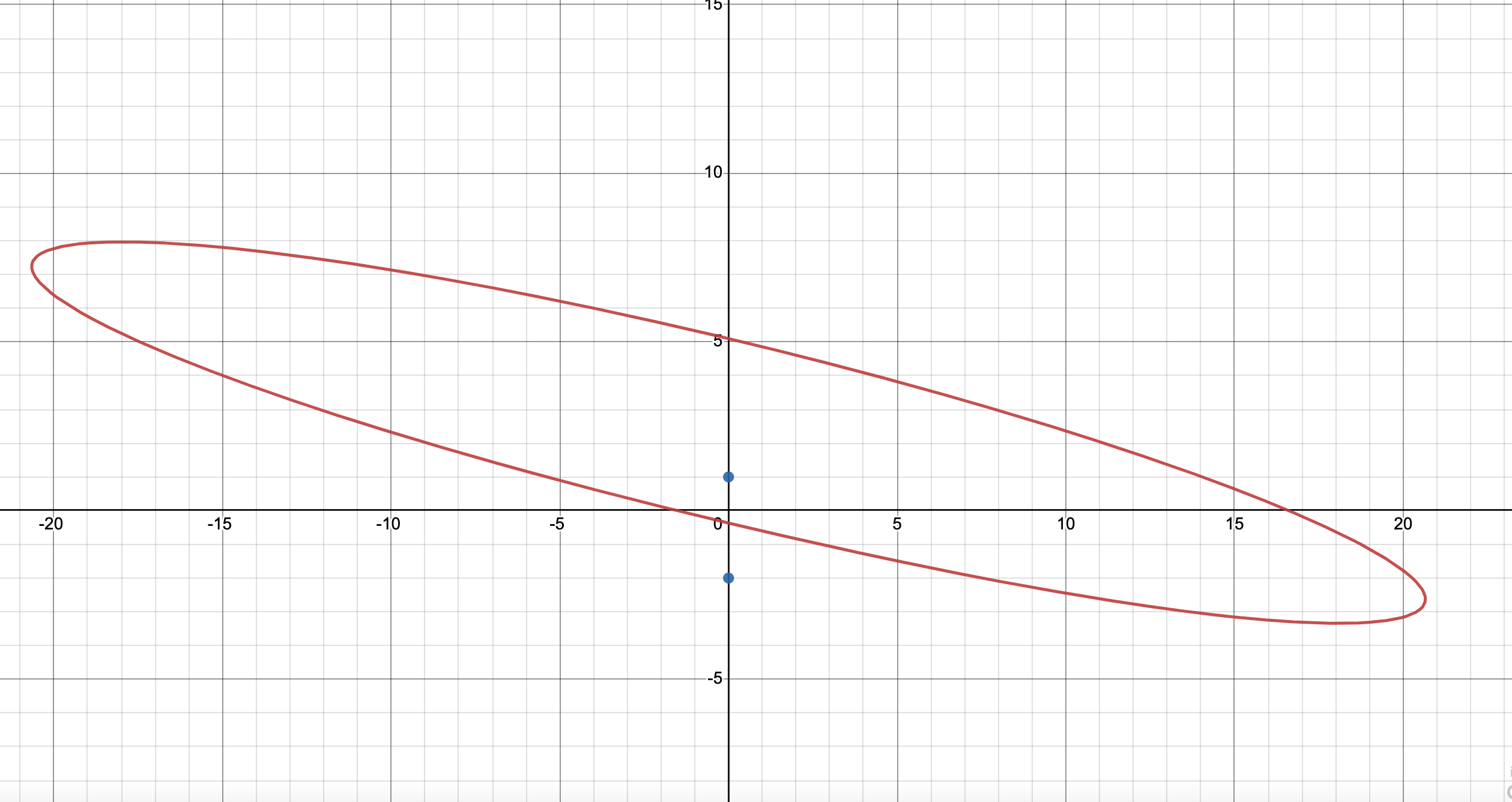}
\caption{\label{fig:graph40}Graph of the function $g(\left \| (x,y) - (0,1)\right \|, \left \| (x,y) - (0,-2)\right \|) = 4$ with the modified definition of inner product in $\mathbb{R}^2$}
\end{figure}

\subsection{Equivalence of Locus in $V$ and $\mathbb{R}^n$ under Isomorphism}
\begin{theorem}
    \label{thm:theorem12}
    Let $V$ be an $n$-dimensional vector space over $\mathbb{R}$ with inner product $\left \langle \cdot, \cdot \right \rangle \to \mathbb{R}$ and ordered basis $\beta$. Applying the isomorphism $\phi_\beta \colon V \to \mathbb{R}^n$ to the locus points $L_{c}\subseteq V$ yields the same result as finding the locus points of the equation on $\mathbb{R}^n$ with the inner product defined via the isomorphism $\phi_{\beta}$.
\end{theorem}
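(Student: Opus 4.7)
The plan is to establish the set equality $\phi_\beta(L_c(x_1,\dots,x_n)) = L_c(\phi_\beta(x_1),\dots,\phi_\beta(x_n))$, where the inner product used on $\mathbb{R}^n$ in computing the right-hand locus is defined by transport through $\phi_\beta$, namely $\langle a,b\rangle_{\mathbb{R}^n} := \langle \phi_\beta^{-1}(a), \phi_\beta^{-1}(b)\rangle_V$ for all $a,b \in \mathbb{R}^n$. First I would verify that this prescription does define an inner product on $\mathbb{R}^n$: bilinearity follows from the linearity of $\phi_\beta^{-1}$ together with bilinearity of $\langle\cdot,\cdot\rangle_V$, symmetry is inherited directly, and positive-definiteness uses that $\phi_\beta^{-1}$ is a bijection (so $a \neq 0$ forces $\phi_\beta^{-1}(a)\neq 0_V$, whence $\langle a,a\rangle_{\mathbb{R}^n} > 0$).

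The central observation is that $\phi_\beta$ is a linear isometry between $(V,\langle\cdot,\cdot\rangle_V)$ and $(\mathbb{R}^n,\langle\cdot,\cdot\rangle_{\mathbb{R}^n})$. Indeed, for any $v\in V$, the definition of the transported inner product gives $\|\phi_\beta(v)\|_{\mathbb{R}^n}^{2} = \langle \phi_\beta^{-1}(\phi_\beta(v)), \phi_\beta^{-1}(\phi_\beta(v))\rangle_V = \|v\|_V^{2}$, so $\phi_\beta$ preserves norms. Combining norm-preservation with linearity of $\phi_\beta$, one obtains for any $z,x_i\in V$ the identity $\|z-x_i\|_V = \|\phi_\beta(z-x_i)\|_{\mathbb{R}^n} = \|\phi_\beta(z)-\phi_\beta(x_i)\|_{\mathbb{R}^n}$.

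From here the theorem is immediate. Since $g$ is a fixed linear function of its $n$ norm entries, substitution yields $g(\|z-x_1\|_V,\dots,\|z-x_n\|_V) = g(\|\phi_\beta(z)-\phi_\beta(x_1)\|_{\mathbb{R}^n},\dots,\|\phi_\beta(z)-\phi_\beta(x_n)\|_{\mathbb{R}^n})$. Therefore the left-hand side equals $c$ if and only if the right-hand side equals $c$; equivalently, $z\in L_c(x_1,\dots,x_n)$ in $V$ if and only if $\phi_\beta(z) \in L_c(\phi_\beta(x_1),\dots,\phi_\beta(x_n))$ in $\mathbb{R}^n$. The bijectivity of $\phi_\beta$ then upgrades this pointwise biconditional into the desired set equality.

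There is no serious obstacle here: the result is essentially a \emph{transport of structure} statement, and every step reduces either to linearity of $\phi_\beta$ or to the very definition of the inner product on $\mathbb{R}^n$. The main care required is notational — distinguishing $\langle\cdot,\cdot\rangle_V$ from $\langle\cdot,\cdot\rangle_{\mathbb{R}^n}$ at every step, and emphasizing that this is \emph{not} the same statement as comparing the locus in $V$ with the locus computed in $\mathbb{R}^n$ under the standard inner product. That distinction is precisely what the worked example preceding the theorem (with $V = P_1(\mathbb{R})$ and the integral inner product) is meant to illustrate.
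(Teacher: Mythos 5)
Your proposal is correct, and it proves exactly what the paper proves, but it packages the argument differently. The paper defines the inner product on $\mathbb{R}^n$ concretely in coordinates, as $\langle x,y\rangle_{\mathbb{R}^n}=\sum_{i=1}^{n}\sum_{j=1}^{n}a_i b_j\langle s_i,s_j\rangle_V$ for $x=(a_1,\dots,a_n)$, $y=(b_1,\dots,b_n)$ and $\beta=\{s_1,\dots,s_n\}$, and then proves the two inclusions separately by expanding each $\|z-x_i\|_V^2$ into this double sum and, conversely, reassembling the double sum into $\langle x-x_i,x-x_i\rangle_V$ by bilinearity. Your version defines the same form intrinsically, $\langle a,b\rangle_{\mathbb{R}^n}:=\langle\phi_\beta^{-1}(a),\phi_\beta^{-1}(b)\rangle_V$ (note that expanding $\phi_\beta^{-1}(a)=\sum_i a_i s_i$ shows this is literally the paper's formula), isolates the single key lemma that $\phi_\beta$ is a linear isometry for the transported structure, and then gets both directions at once from the pointwise biconditional plus bijectivity of $\phi_\beta$. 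What your route buys is economy and rigor: the norm identity $\|z-x_i\|_V=\|\phi_\beta(z)-\phi_\beta(x_i)\|_{\mathbb{R}^n}$ replaces the paper's two coordinate expansions, you avoid the index bookkeeping (where the paper's displayed formulas in fact drop a summation sign in places), and you explicitly check that the transported form is an inner product, which the paper only asserts. What the paper's route buys is the explicit Gram-matrix formula that is then reused in the worked examples (e.g.\ computing the modified norm on $\mathbb{R}^2$ for $P_1(\mathbb{R})$), so if you adopt your phrasing it is worth stating that coordinate formula as a corollary of your definition.
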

\begin{proof}

We will assume that we are working on a real inner product space $V$ such that $\dim V = n$ and let $\beta = \{ s_{1}, \dots, s_{n}\}$ be an ordered basis for $V$. Hence, we will be working with an isomorphism to $\mathbb{R}^{n}$. We will also define our function $g$ to be:

\[
g(\left \| x-x_{1} \right \|, \dots, \left \| x-x_{n} \right \|) = \alpha_{1} \left \| x-x_{1} \right \| + \dots + \alpha_{n} \left \| x-x_{n} \right \| = c
\]

We will also define our real inner product to be:

\[
\left \langle \cdot, \cdot \right \rangle_{V} \rightarrow \mathbb{R}
\]

What this theorem is stating is that all the locus after the isomorphism has been applied are solutions to the following equation, and that all solutions to the following equation are isomorphism to $\mathbb{R}^{n}$ of a locus:

\[
g(\left \| [x]_{\beta}-[x_{1}]_{\beta} \right \|, \dots, \left \| [x]_{\beta}-[x_{n}]_{\beta} \right \|) =\] \[\alpha_{1} \left \| [x]_{\beta}-[x_{1}]_{\beta} \right \| + \dots + \alpha_{n} \left \| [x]_{\beta}-[x_{n}]_{\beta} \right \| = c,
\]

where the variable $[x]_{\beta} \in \mathbb{R}^{n}$, the focus points $[x_{1}]_{\beta}, \dots, [x_{n}]_{\beta}$ are the isomorphisms to $\mathbb{R}^{n}$ of the original focus points, and the real inner product on the vector space $\mathbb{R}^{n}$ between any two vectors $x, y \in \mathbb{R}^{n}$ is defined to be:

\[
\left \langle x, y \right \rangle \rightarrow \mathbb{R}
\]

\[
\left \langle x, y \right \rangle \mapsto \sum_{i=1}^{n} \sum_{j=1}^{n} a_{i}b_{j} \left \langle s_{i}, s_{j} \right \rangle_{V}
\]

That is, we will define the real inner product in our vector space $\mathbb{R}^{n}$ to be the addition of all the inner products of all possible combinations of 2 elements from our basis $\beta$ times the coordinate of our $x$ vector, times the coordinate of our $y$ vector.

We will do this proof in two parts. We will firstly prove that the all the points we obtain through the isomorphism from the locus to $\mathbb{R}^{n}$ are solutions to the modified equation:

\[
g(\left \| [x]_{\beta}-[x_{1}]_{\beta} \right \|, \dots, \left \| [x]_{\beta}-[x_{n}]_{\beta} \right \|) =\] \[\alpha_{1} \left \| [x]_{\beta}-[x_{1}]_{\beta} \right \| + \dots + \alpha_{n} \left \| [x]_{\beta}-[x_{n}]_{\beta} \right \| = c,
\]

and then we will prove that all the solutions to the equation above, are also locus in the original equation:

\[
g(\left \| x-x_{1} \right \|, \dots, \left \| x-x_{n} \right \|) = \alpha_{1} \left \| x-x_{1} \right \| + \dots + \alpha_{n} \left \| x-x_{n} \right \| = c
\]

One can readily show that the real inner product in the vector space $ \left \langle \cdot, \cdot \right \rangle_{V} \rightarrow \mathbb{R}$ can be transformed to be a real inner product in the vector space $ \mathbb{R}^{n}$. Let $v, w \in V$. We also know that $\beta$ is an ordered basis for $V$, hence $v = a_{1}\cdot s_{1}+ \dots a_{n} s_{n}$ and $ w = b_{1}\cdot s_{1}+ \dots b_{n} s_{n}$. With this in mind, we can write the inner product between these two vectors in $V$ as follows:

\[
\left \langle v, w \right \rangle_{V} = \left \langle a_{1}\cdot s_{1}+ \dots a_{n} s_{n}, b_{1}\cdot s_{1}+ \dots b_{n} s_{n} \right \rangle_{V}
\]

But we know that real inner products are linear, so we inner product above simplifies to:

\[
\left \langle a_{1}\cdot s_{1}+ \dots a_{n} s_{n}, b_{1}\cdot s_{1}+ \dots b_{n} s_{n} \right \rangle_{V} = a_{1}b_{1}\left \langle s_{1}, s_{1} \right \rangle_{V} 
\]
\[
+ \dots + a_{n}b_{1}\left \langle s_{n}, s_{1} \right \rangle_{V} + \dots + a_{1}b_{n}\left \langle s_{1}, s_{n} \right \rangle_{V} + \dots a_{n}b_{n}\left \langle s_{n}, s_{n} \right \rangle_{V} =
\]
\[
\sum_{i=1}^{n} \sum_{j=1}^{n} a_{i}b_{j} \left \langle s_{i}, s_{j} \right \rangle_{V}
\]

If we now let $x = [v]_{\beta}$ and $y = [w]_{\beta}$, then we have a definition for a real inner product of vectors $x, y \in \mathbb{R}^{n}$.

Now that we have proved that these two inner products are equivalent after applying the isomorphism, we will prove that the isomorphism of a vector $z \in L_{c}(x_{1},\dots, x_{n})$ is a solution to the equation:

\[
g(\left \| [x]_{\beta}-[x_{1}]_{\beta} \right \|, \dots, \left \| [x]_{\beta}-[x_{n}]_{\beta} \right \|) =\] \[\alpha_{1} \left \| [x]_{\beta}-[x_{1}]_{\beta} \right \| + \dots + \alpha_{n} \left \| [x]_{\beta}-[x_{n}]_{\beta} \right \| = c,
\]

Of course, since $z \in L_{c}(x_{1},\dots, x_{n})$, then:

\[
g(\left \| z-x_{1} \right \|, \dots, \left \| z-x_{n} \right \|) = \alpha_{1} \left \| z-x_{1} \right \| + \dots + \alpha_{n} \left \| z-x_{n} \right \| = c
\]

Since vector spaces are closed under linear combinations, then we have that $z-x_{1}, \dots, z-x_{n} \in V$. Knowing that $\beta$ is an ordered basis for $V$, then we can write those vectors as:

\[
\left\{\begin{matrix}
z-x_{1}= (a_{1} - a_{x_{1}1})s_{1} +  \dots (a_{n} - a_{x_{1}n})s_{n}\\ 
\dots\\ 
z-x_{n} = (a_{1} - a_{x_{n}1})s_{1} +  \dots (a_{n} - a_{x_{n}n})s_{n}
\end{matrix}\right.
\]

Where $a_{11}, \dots a_{1n}$ are the coefficients of $z$ and $a_{x_{1}1}, \dots, a_{x_{n}n}$ are the coefficients for the different focus points $x_{i}$. In proving the equivalence between the inner product in a vector space $V$ and $\mathbb{R}^{n}$, we realized that:

\[
\left \langle z-x_{1}, z-x_{1} \right \rangle_{V} = \sum_{i=1}^{n} \sum_{j=1}^{n} (a_{i} - a_{x_{1}i})(a_{j} - a_{x_{1}j}) \left \langle s_{i}, s_{j} \right \rangle_{V}
\]

We can do this to all terms, and we have that the equation $g$ becomes:

\[
g(\left \| z-x_{1} \right \|, \dots, \left \| z-x_{n} \right \|) = \alpha_{1} \left \| z-x_{1} \right \| + \dots + \alpha_{n} \left \| z-x_{n} \right \| = c
\]
\[
\alpha_{1} \sqrt{\sum_{i=1}^{n} \sum_{j=1}^{n} (a_{i} - a_{x_{1}i})(a_{j} - a_{x_{1}j}) \left \langle s_{i}, s_{j} \right \rangle_{V}} + \dots +\] \[\alpha_{n}\sqrt{ \sum_{j=1}^{n} (a_{i} - a_{x_{n}i})(a_{j} - a_{x_{n}j}) \left \langle s_{i}, s_{j} \right \rangle_{V}}
\]

Knowing that $[z]_{\beta} = (a_{1}, \dots, a_{n})$ and that $[x_{i}]_{\beta} = (a_{x_{i}1}, \dots, a_{x_{i}n})$ and having in mind that we defined the inner product in $\mathbb{R}^{n}$ to be $\left \langle x, y \right \rangle \mapsto \sum_{i=1}^{n} \sum_{j=1}^{n} a_{i}b_{j} \left \langle s_{i}, s_{j} \right \rangle_{V}$, then we have that:

\[
\alpha_{1} \sqrt{\sum_{i=1}^{n} \sum_{j=1}^{n} (a_{i} - a_{x_{1}i})(a_{j} - a_{x_{1}j}) \left \langle s_{i}, s_{j} \right \rangle_{V}} + \dots + \alpha_{n} \sqrt{ \sum_{j=1}^{n} (a_{i} - a_{x_{n}i})(a_{j} - a_{x_{n}j}) \left \langle s_{i}, s_{j} \right \rangle_{V}} =
\]
\[
\alpha_{1} \sqrt{\left \langle [z]_{\beta} - [x_{1}]_{\beta}, [z]_{\beta} - [x_{1}]_{\beta} \right \rangle} + \dots +\] \[\alpha_{n} \sqrt{ \left \langle [z]_{\beta} - [x_{n}]_{\beta}, [z]_{\beta} - [x_{n}]_{\beta} \right \rangle} =
\]
\[
g(\left \| [z]_{\beta}-[x_{1}]_{\beta} \right \|, \dots, \left \| [z]_{\beta}-[x_{n}]_{\beta} \right \|)
\]

Hence, we have proved that if a vector $z \in L_{c}(x_{1}, \dots, x_{n})$, then its isomorphism to $\mathbb{R}^{n}$ is a solution to the equation:

\[
g(\left \| [x]_{\beta}-[x_{1}]_{\beta} \right \|, \dots, \left \| [x]_{\beta}-[x_{n}]_{\beta} \right \|) = c,
\]
which is to say that $[z]_{\beta} \in L_{c}([x_{1}]_{\beta}, \dots, [x_{n}]_{\beta})$.

Now we will prove the other direction. That is, if we have that $[x]_{\beta} \in L_{c}([x_{1}]_{\beta}, \dots, [x_{n}]_{\beta})$, then $x \in L_{c}(x_{1}, \dots, x_{n})$.

Suppose that $[x]_{\beta} \in L_{c}([x_{1}]_{\beta}, \dots, [x_{n}]_{\beta})$. Then we know that:

\[
g(\left \| [x]_{\beta}-[x_{1}]_{\beta} \right \|, \dots, \left \| [x]_{\beta}-[x_{n}]_{\beta} \right \|) = \alpha_{1} \left \| [x]_{\beta}-[x_{1}]_{\beta} \right \| + \dots + \alpha_{n} \left \| [x]_{\beta}-[x_{n}]_{\beta} \right \| = c,
\]

This means that:

\[
\alpha_{1} \sqrt{\left \langle [x]_{\beta} - [x_{1}]_{\beta}, [x]_{\beta} - [x_{1}]_{\beta} \right \rangle} + \dots + \alpha_{n}  \sqrt{\left \langle [x]_{\beta} - [x_{n}]_{\beta}, [x]_{\beta} - [x_{n}]_{\beta} \right \rangle} = c
\]

Since we defined the real inner product in $\mathbb{R}^{n}$ to be the equation given above, then we can substitute that in for the function and, letting $[x]_{\beta} = (b_{1}, \dots, b_{n})$ we have that:

\[
\alpha_{1} \sqrt{\left \langle [x]_{\beta} - [x_{1}]_{\beta}, [x]_{\beta} - [x_{1}]_{\beta} \right \rangle} + \dots + \alpha_{n}  \sqrt{\left \langle [x]_{\beta} - [x_{n}]_{\beta}, [x]_{\beta} - [x_{n}]_{\beta} \right \rangle} = c =
\]
\[
\alpha_{1} \sqrt{\sum_{i=1}^{n} \sum_{j=1}^{n} (b_{i} - a_{x_{1}i})(b_{j} - a_{x_{1}j}) \left \langle s_{i}, s_{j} \right \rangle_{V}} + \dots +\] \[\alpha_{n}\sqrt{ \sum_{j=1}^{n} (b_{i} - a_{x_{n}i})(b_{j} - a_{x_{n}j}) \left \langle s_{i}, s_{j} \right \rangle_{V}}
\]

In proving the equivalence between the inner product on $V$ and this inner product in $\mathbb{R}$, we realized that 

\[
\left \langle \sum_{i=1}^{n} (b_{i} - a_{x_{1}i})s_{i}, \sum_{i=1}^{n} (b_{i} - a_{x_{1}i})s_{i} \right \rangle_{V} = sum_{i=1}^{n} \sum_{j=1}^{n} (b_{i} - a_{x_{1}i})(b_{j} - a_{x_{1}j}) \left \langle s_{i}, s_{j} \right \rangle_{V}
\]

But, of course, $ \sum_{i=1}^{n} b_{i} \cdot s_{i} = x \in V$ and $\sum_{i=1}^{n} a_{x_{1}i} \cdot s_{i} = x_{1} \in V$. We can do this for all the inner products in the equation above and we have that:

\[
\alpha_{1} \sqrt{\sum_{i=1}^{n} \sum_{j=1}^{n} (b_{i} - a_{x_{1}i})(b_{j} - a_{x_{1}j}) \left \langle s_{i}, s_{j} \right \rangle_{V}} + \dots +\] \[\alpha_{n}\sqrt{ \sum_{j=1}^{n} (b_{i} - a_{x_{n}i})(b_{j} - a_{x_{n}j}) \left \langle s_{i}, s_{j} \right \rangle_{V}} =
\]
\[
\alpha_{1} \sqrt{\left \langle x-x_{1}, x-x_{1}\right \rangle_{V}} + \dots + \alpha_{n}\sqrt{\left \langle x-x_{1}, x-x_{1}\right \rangle_{V}} = 
\]
\[
\alpha_{1} \left \| x-x_{1} \right \| + \dots + \alpha_{n} \left \| x-x_{n} \right \| = g(\left \| x-x_{1} \right \|, \dots, \left \| x-x_{n} \right \|) = c
\]

Hence we have proven that if $[x]_{\beta}$ is a solution to the equation:

\[
g(\left \| [x]_{\beta}-[x_{1}]_{\beta} \right \|, \dots, \left \| [x]_{\beta}-[x_{n}]_{\beta} \right \|) = \alpha_{1} \left \| [x]_{\beta}-[x_{1}]_{\beta} \right \| + \dots + \alpha_{n} \left \| [x]_{\beta}-[x_{n}]_{\beta} \right \| = c,
\]
then $x$ is a solution to the equation:

\[ g(\left \| x-x_{1} \right \|, \dots, \left \| x-x_{n} \right \|) = c
\]

\end{proof}

This last theorem tells us that the order in which we apply the function or the isomorphism does not matter, so long as we have an appropriate definition for the real inner product in the vector space $\mathbb{R}^{n}$, which we define to be:

\[
\left \langle x, y \right \rangle \rightarrow \mathbb{R}
\]

\[
\left \langle x, y \right \rangle \mapsto \sum_{i=1}^{n} \sum_{j=1}^{n} a_{i}b_{j} \left \langle s_{i}, s_{j} \right \rangle_{V}
\]

This insight is incredibly helpful because in many cases it can be computationally easier to calculate the values of the locus in $\mathbb{R}^{n}$ and to then apply the inverse of the isomorphism in order to find the locus in the vector space $V$. In addition to that, this isomorphism can, perhaps, give more insight into how the locus looks and how it maps to $\mathbb{R}^{n}$.

As an example, we will consider the definition of an ellipse in 3 different vector spaces with and isomorphism to $\mathbb{R}^{2}$. An ellipse is defined to be the set of all points such that, given two fixes focus points, the sum of the distances between these focus points to the points in the ellipse is constant. What this means for our function $g$ is that it must be of the form:

\[
g(\left \| x-x_{1}\right \|,\left \| x-x_{2}\right \|) = \left \| x-x_{1}\right \| + \left \| x-x_{2}\right \| = c
\]

\begin{example}
    
Consider the following vector spaces, their inner products, and their ordered bases:

\[
\left\{\begin{matrix}
V = M_{2 \times 1}(\mathbb{R})\\ 
\left \langle A, B \right \rangle_{V} = \mathrm{tr}(A^{T}\cdot B) \\
\beta = \{ \begin{bmatrix} 1\\  0\end{bmatrix}, \begin{bmatrix} 0\\  1\end{bmatrix}\}
\end{matrix}\right.
\]
\[
\left\{\begin{matrix}
U = P_{1}(R)\\ 
\left \langle f(x), g(x) \right \rangle_{U} = \int_{0}^{1} f(x) \cdot g(x)  dx \\
\gamma = \{ 1, x\}

\end{matrix}\right.
\]
\[
\left\{\begin{matrix}
W = \mathbb{R}^{2}\\ 
\left \langle (a_{1},a_{2}), (b_{1},b_{2}) \right \rangle_{W} = a_{1} \cdot b_{1} +a_{2} \cdot b_{2} \\
\zeta = \{ (1,0), (0,1) \}
\end{matrix}\right.
\]

Now that we have the vector spaces, inner products, and bases defined, we will define our functions for an ellipse to be:

\[
g(\left \| X-\begin{bmatrix} 0\\  2\end{bmatrix}\right \|,\left \| X-\begin{bmatrix} 0\\  -2\end{bmatrix}\right \|) = 10
\]
\[
g(\left \| f(x)-2x\right \|,\left \| f(x)+2x\right \|) = 10
\]
\[
g(\left \| (x,y)-(0,2)\right \|,\left \| (x,y)-(0,-2)\right \|) = 10
\]

Note that the isomorphisms of the focus points to $\mathbb{R}^{2}$ are the same. That is, $[\begin{bmatrix} 0\\  2\end{bmatrix}]_{\beta} = [2x]_{\gamma} = [(0,2)]_{\zeta} = (0,2)$, and $[\begin{bmatrix} 0\\  -2\end{bmatrix}]_{\beta} = [-2x]_{\gamma} = [(0,-2)]_{\zeta} = (0,-2)$

Firstly, we need to note how the vector spaces are spanned with respect to their bases. We have that the vector spaces can be spanned as follows and, hence, their coordinates with respect to their bases can be written as follows too:

\[
V = \mathrm{span}(\{\begin{bmatrix} 1\\  0\end{bmatrix}, \begin{bmatrix} 0\\  1\end{bmatrix}\}) = a_{1} \cdot \begin{bmatrix} 1\\  0\end{bmatrix} + a_{2} \cdot \begin{bmatrix} 0\\  1\end{bmatrix} = \begin{bmatrix} a_{1}\\  a_{2}\end{bmatrix}
\]
\[
[\begin{bmatrix} a_{1}\\  a_{2}\end{bmatrix}]_{\beta} = (a_{1},a_{2})
\]

\[
U = \mathrm{span}(\{1, x \}) = a_{1} \cdot 1 + a_{2} \cdot x = a_{1} + a_{2}x
\]
\[
[ a_{1} +  a_{2}x]_{\gamma} = (a_{1},a_{2})
\]

Now that we have a clear idea of how the vectors in the vector space map to the vector space $\mathbb{R}^{2}$, we can move on to finding the real inner product of these mappings using the inner products for its vector space. We will start by working with the vector space $V$

\[
\left \langle A, B \right \rangle_{V} = \mathrm{tr}(A^{T}\cdot B) 
\]
\[
A = \begin{bmatrix} a_{1}\\  a_{2}\end{bmatrix}, B = \begin{bmatrix} b_{1}\\  b_{2}\end{bmatrix}
\]
\[
[A]_{\beta} =(a_{1},a_{2}), [B]_{\beta} =(b_{1},b_{2})
\]
\[
\left \langle (a_{1}, a_{2}), (b_{1}, b_{2}) \right \rangle = \sum_{i=1}^{2} \sum_{j=1}^{2} a_{i}b_{j} \left \langle s_{i}, s_{j} \right \rangle_{V} =
\]
\[
a_{1}b_{1} \left \langle \begin{bmatrix} 1\\  0\end{bmatrix}, \begin{bmatrix} 1\\  0\end{bmatrix} \right \rangle_{V} + a_{1}b_{2} \left \langle \begin{bmatrix} 1\\  0\end{bmatrix}, \begin{bmatrix} 0\\  1\end{bmatrix} \right \rangle_{V} 
\]
\[
+ a_{2}b_{1} \left \langle \begin{bmatrix} 0\\  1\end{bmatrix}, \begin{bmatrix} 1\\  0\end{bmatrix} \right \rangle_{V} + a_{2}b_{2} \left \langle \begin{bmatrix} 0\\  1\end{bmatrix}, \begin{bmatrix} 0\\  1\end{bmatrix} \right \rangle_{V} =
\]
\[
a_{1}b_{1}\cdot 1 + a_{1}b_{2} \cdot 0 + a_{2}b_{1} \cdot 0 + a_{2}b_{2} \cdot 1 = a_{1}b_{1} + a_{2}b_{2}
\]

This means that the norm of a vector in the vector space $V$ is given by:

\[
\sqrt{\left \langle (a_{1}, a_{2}), (a_{1}, a_{2}) \right \rangle} = \sqrt{a_{1}^{2}+a_{2}^{2}}
\]

With this in mind, we know, hence, that the isomorphism of the locus in the vector space $V$ with its real inner product, can also be found as the locus of the equation:

\[
g(\left \| (x,y)-(0,2)\right \|,\left \| (x,y)-(0,-2)\right \|) = 
\]
\[
\sqrt{x^{2}+(y-2)^{2}} + \sqrt{x^{2}+(y+2)^{2}} = 10
\]

Moving on, we have the vector space $U$. We will follow the same procedure to find the definition of real inner product in the vector space $\mathbb{R}^{2}$ such that the isomorphism of the loci of $U$ is the loci of $g(\left \| (x,y)-(0,2)\right \|,\left \| (x,y)-(0,-2)\right \|)$, with an altered definition of norm.

\[
\left \langle f(x), g(x) \right \rangle_{U} = \int_{0}^{1} f(x) \cdot g(x)  dx
\]
\[
f(x)=a_{1}+a_{2}x, g(x)=b_{1}+b_{2}x
\]
\[
[f(x)]_{\gamma}=(a_{1},a_{2}), [g(x)]_{\gamma}=(b_{1},b_{2})
\]
\[
\left \langle (a_{1}, a_{2}), (b_{1}, b_{2}) \right \rangle = \sum_{i=1}^{2} \sum_{j=1}^{2} a_{i}b_{j} \left \langle s_{i}, s_{j} \right \rangle_{U} =
\]
\[
a_{1}b_{1} \left \langle 1,1 \right \rangle_{U} + a_{1}b_{2} \left \langle 1, x \right \rangle_{U} + a_{2}b_{1} \left \langle x, 1 \right \rangle_{U} + a_{2}b_{2} \left \langle x,x \right \rangle_{U} =
\]
\[
a_{1}b_{1} \cdot 1 + a_{1}b_{2} \cdot \frac{1}{2} + a_{2}b_{1} \cdot \frac{1}{2} + a_{2}b_{2} \cdot \frac{1}{3} = 
\]
\[
\frac{6 a_{1}b_{1} + 3 a_{1}b_{2} + 3 a_{2}b_{1} + 2 a_{2}b_{2}}{6}
\]

This means that the norm of a vector in the vector space $U$ is given by:

\[
\sqrt{\left \langle (a_{1}, a_{2}), (a_{1}, a_{2}) \right \rangle} = \sqrt{\frac{6a_{1}^2+6a_{1}a_{2}+2a_{2}^2}{6}}
\]

With this in mind, we can determine that the isomorphism of the locus in $U$ can be obtained by solving the equation:

\[
g(\left \| (x,y)-(0,2)\right \|,\left \| (x,y)-(0,-2)\right \|) = 
\]
\[
\sqrt{\frac{6x^2+6x(y-2)+2(y-2)^2}{6}} + \sqrt{\frac{6x^2+6x(y+2)+2(y+2)^2}{6}} = 10
\]

\begin{figure}
\centering
\includegraphics[width=0.75\textwidth]{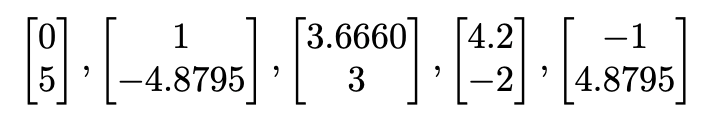}
\caption{\label{fig:graph46}Subset of the locus of $g(\left \| X-\begin{bmatrix} 0\\  2\end{bmatrix}\right \|,\left \| X-\begin{bmatrix} 0\\  -2\end{bmatrix}\right \|) = 10$ in the vector space $V$}
\end{figure}

\begin{figure}
\centering
\includegraphics[width=0.75\textwidth]{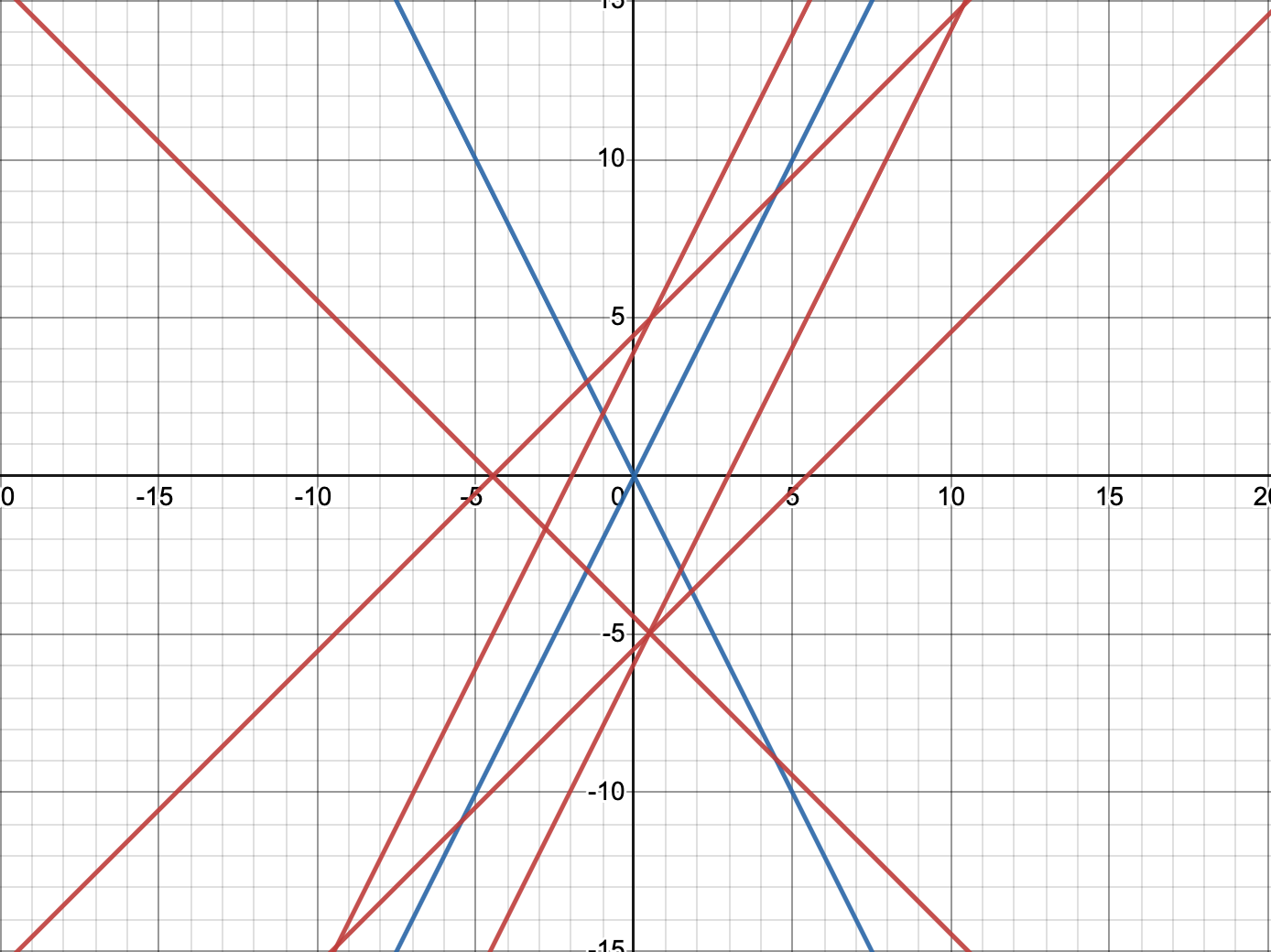}
\caption{\label{fig:graph44}Graph of the locus of $g(\left \| f(x)-2x\right \|,\left \| f(x)+2x\right \|) = 10$ in the vector space $U$}
\end{figure}

\begin{figure}
\centering
\includegraphics[width=0.75\textwidth]{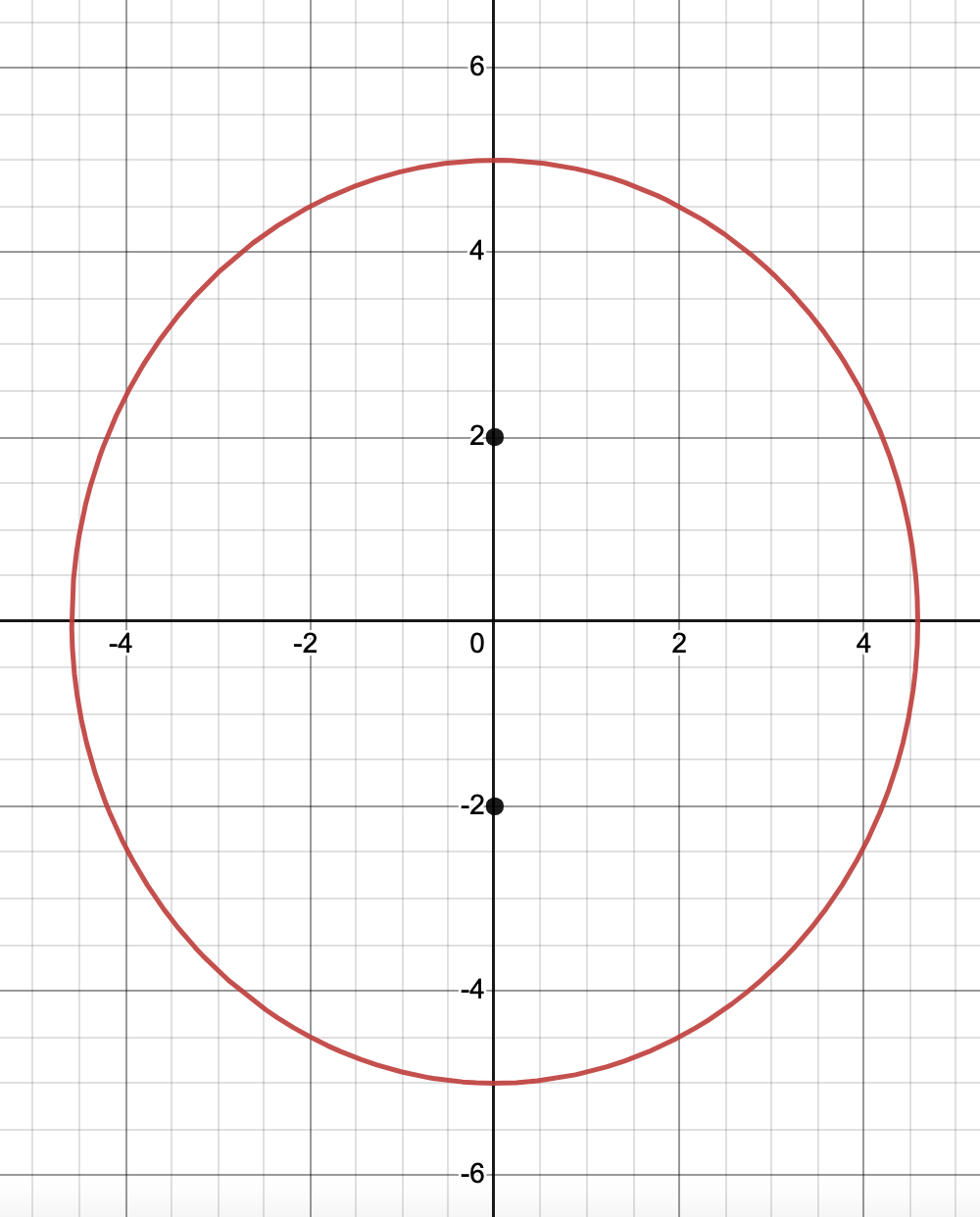}
\caption{\label{fig:graph45}Graph of the locus of $g(\left \| (x,y) - (0,2)\right \|, \left \| (x,y) - (0,-2)\right \|) = 10$ in the vector space $W$}
\end{figure}

\end{example}

As we can see in Figures \ref{fig:graph46} and \ref{fig:graph44}, the need for this isomorphism is clear. Evidently, trying to decipher any pattern amongst the locus by merely observing at these graphs can prove to be an impossible task. However, by applying the isomorphism to $\mathbb{R}^{n}$, one can more easily note the relationship between the locus and observe their behavior much more easily. This can be seen in Figure \ref{fig:graph47}, which includes a graph of the isomorphisms of all the locus in their respective vector spaces to $\mathbb{R}^{2}$. Note that the isomorphism of the locus in the vector space $V$ map to the same locus as the vector space $W$, that is, the locus of the isomorphism of $V$ map to the exact same locus as the same locus defined for $W$. This is due to the fact that the isomorphism of the real inner product of $V$ happens to be the same as the inner product defined in $\mathbb{R}^{2}$, however this does not always have to be the case. 
Now that we have this isomorphism in mind and that we know how to compute the relationship between vector spaces and $\mathbb{R}^{n}$, we can apply our theorems from Section \ref{sec:section3} to these loci in order to obtain some interesting results. Evidently, one can readily realize that we can apply Theorems \ref{thm:theorem2}, \ref{thm:theorem3}, \ref{thm:theorem4}. Note that, in order to apply these theorems, one must, one again, change the definition of a norm and set it to be the Euclidean definition of norm. This is because the definition of an angle between the vectors in a vector space does not always equate to the angle of the mapping of the vectors in $\mathbb{R}^{n}$. For that reason, we firstly have to change the definition of norm in $\mathbb{R}^{n}$ in order to find the mapping of the locus to $\mathbb{R}^{n}$, and then, in order to rotate the focus points, we have to make use of the definition $\left \langle a, b \right \rangle = a_{1}\cdot b_{1} + \dots + a_{n}\cdot b_{n}$. Once we find this linear transformation we can resort to using the alternate definition of norm which we obtained through the mapping of the inner product of $V$ to $\mathbb{R}^{n}$.

\subsection{Implications of Isomorphism on Geometry of $V$}

These last few proofs determining the equivalence through the isomorphism of the vector spaces $V$ and $\mathbb{R}^n$ provide a clear simplification, as expounded on above, on the visualization of the locus in the vector space $V$ to the more akin to our reality $\mathbb{R}^n$. However, the implications of the theorems above do not only apply solely to the visualization and calculation of locus in the vector space $V$, but also as a means to understand the geometry of the vector space $V$ itself. As such, we will devote this section, though unrelated to the rest of the paper, to the implications of the isomorphism between these vector spaces and the geometry of the vector space $V$

\subsubsection{Lines, Planes, and Bisectors}

Firstly, we will begin by exploring lines, planes, and bisectors in different vector spaces and how they map to their isomorphic $\mathbb{R}^n$. Perhaps we will find it easier to explore vector spaces isomorphic to $\mathbb{R}^3$ and $\mathbb{R}^2$, though this can be extended to higher dimensions. This not being the primary focus of the investigation, we will limit ourselves to make some basic observations of some vector spaces. 

Finding the perpendicular bisector of 2 points in $\mathbb{R}^2$, yielding a line, and in $\mathbb{R}^3$ is relatively easy, since it can be thought as the locus of points which are equally distanced from 2 focus points. Thus, the fact that points are equally distanced from both points implies that the difference of the distances is 0, meaning that in $\mathbb{R}^2$ and in $\mathbb{R}^3$, we would have the following equations respectively:

\[
\sqrt{(x-a_{0})^2+(y-a_{1})^2} - \sqrt{(x-b_{0})^2+(y-b_{1})^2}=0\]

\[
\sqrt{(x-c_{0})^2+(y-c_{1})^2+(z-c_{2})^2} - \sqrt{(x-c_{0})^2+(y-c_{1})^2+(z-c_{2})^2}=0\]

which we can write in our notation as:

\[g(\left\|(x,y)-(a_{1},a_{2})\right\|,\left\|(x,y)-(b_{1},b_{2})\right\|)=\]
\[\left\|(x,y)-(a_{1},a_{2})\right\| - \left\|(x,y)-(b_{1},b_{2})\right\|=0\]

\[g(\left\|(x,y,z)-(c_{1},c_{2},c_{3})\right\|,\left\|(x,y,z)-(d_{1},d_{2},d_{3})\right\|)\]

\[=\left\|(x,y,z)-(c_{1},c_{2},c_{3})\right\|-\left\|(x,y,z)-(d_{1},d_{2},d_{3})\right\|=0\]

\begin{example}
    
We can take, for example, the perpendicular bisector of points $(-1,3)$ and $(1,-2)$ using the equations above as seen in Figure \ref{fig:graph52}. Similarly, we can see the perpendicular bisector plane of the points $(-1,2,1)$ and $(2,-3,-2)$ as seen in Figure \ref{fig:graph53}.

\begin{figure}
\centering
\includegraphics[width=0.75\textwidth]{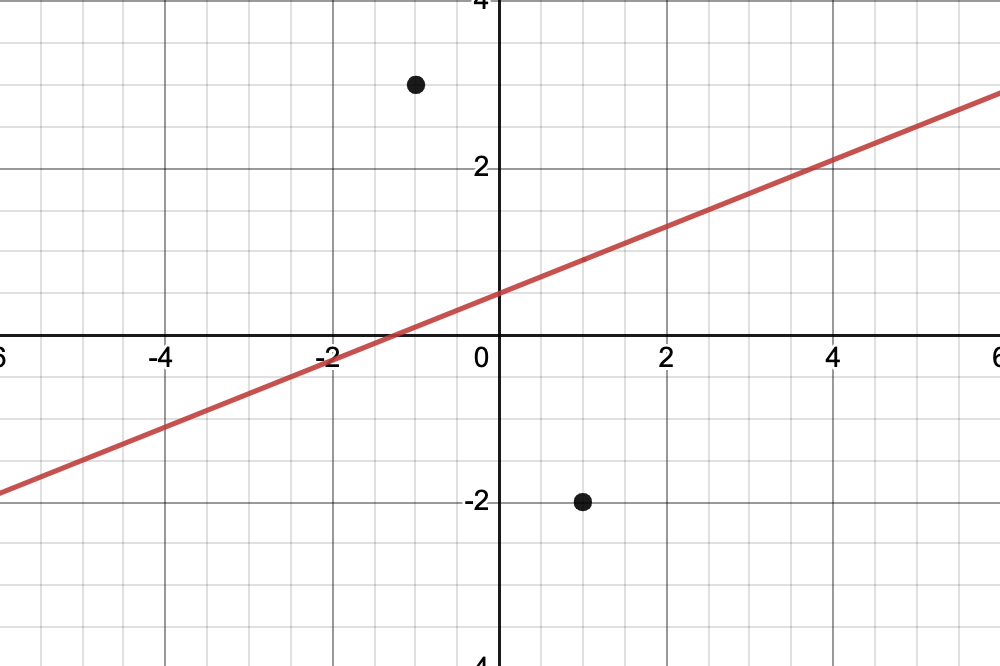}
\caption{\label{fig:graph52}Perpendicular bisector of points $(-1,3)$ and $(1,-2)$}
\end{figure}

\begin{figure}
\centering
\includegraphics[width=0.75\textwidth]{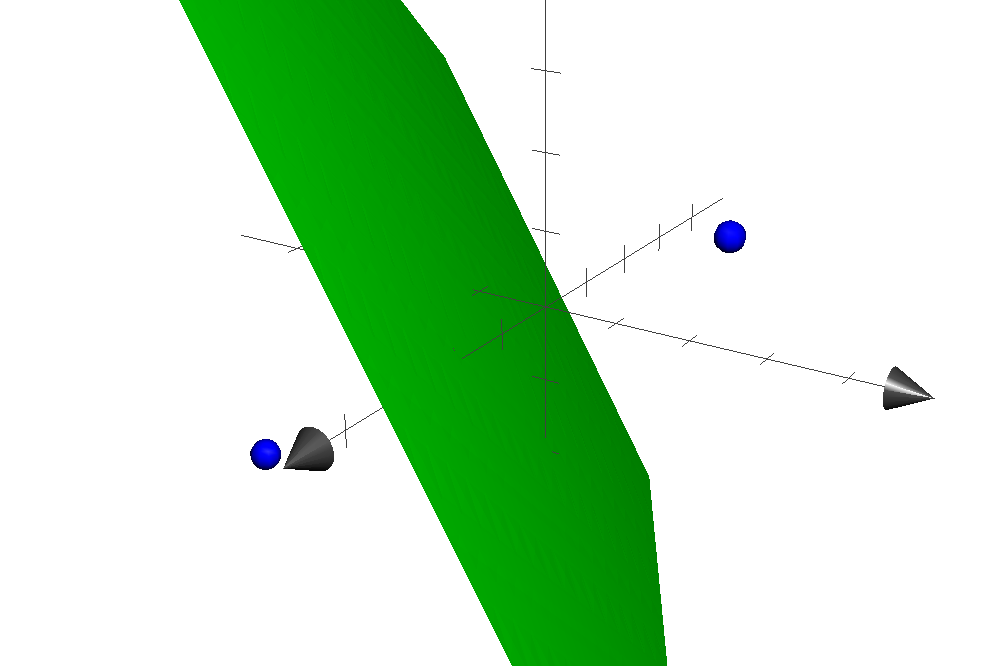}
\caption{\label{fig:graph53}Perpendicular bisector plane of points $(-1,2,1)$ and $(2,-3,-2)$}
\end{figure}

\end{example}

Having this in mind, we might want to move on to consider now a vector space isomorphic to either $\mathbb{R}^2$ or $\mathbb{R}^3$. We can simply look back and see the case we studied earlier with the vector space:

\begin{example}
    
\[
\left\{\begin{matrix}
U = P_{1}(R)\\ 
\left \langle f(x), g(x) \right \rangle_{U} = \int_{0}^{1} f(x) \cdot g(x)  dx \\

\end{matrix}\right.
\]

Similarly to the case we have in $\mathbb{R}^2$, we will obtain the perpendicular bisector of the polynomials $-1+3x $ and $1-2x$ because the coordinates of these matrices with respect to the standard ordered basis $\gamma$ of the vector space $V$, $\gamma = \{ 1, x\} $ are $(-1,3)$ and $(1,-2)$.

Lastly, we now need to find the equation that allows us to map the focus points of the equation in the vector space $U$ to $\mathbb{R}^2$. Again, using what we discussed above, we know that the mapping of the locus in this vector spaces to $\mathbb{R}^2$ is given by the relation:
\[
\left \langle f(x), g(x) \right \rangle_{U} = \left \langle a_{1} + a_{2}x, b_{1} + b_{2}x \right \rangle_{U} =
\left \langle (a_{1}, a_{2}), (b_{1}, b_{2}) \right \rangle =
\]
\[
\frac{6 a_{1}b_{1} + 3 a_{1}b_{2} + 3 a_{2}b_{1} + 2 a_{2}b_{2}}{6}
\]

Hence, the perpendicular bisector of these 2 polynomials can be found by solving the equation:

\[
\sqrt{\frac{6 (x+1)^{2} + 6 (x+1)(y-3) + 2 (y-3)^{2}}{6}} -
\]
\[
\sqrt{\frac{6 (x-1)^{2} + 6 (x-1)(y+2) + 2 (y+2)^2}{6}} = 0
\]

This equation is graphed in Figure \ref{fig:graph54}. Clearly, the mapping of the locus on $\mathbb{R}^2$ is very different form the one we observed using simply the vector space $\mathbb{R}^2$, indicating how the geometry of the vector space $U$ is considerably different form its isomorphic $\mathbb{R}^2$.

\begin{figure}
\centering
\includegraphics[width=0.75\textwidth]{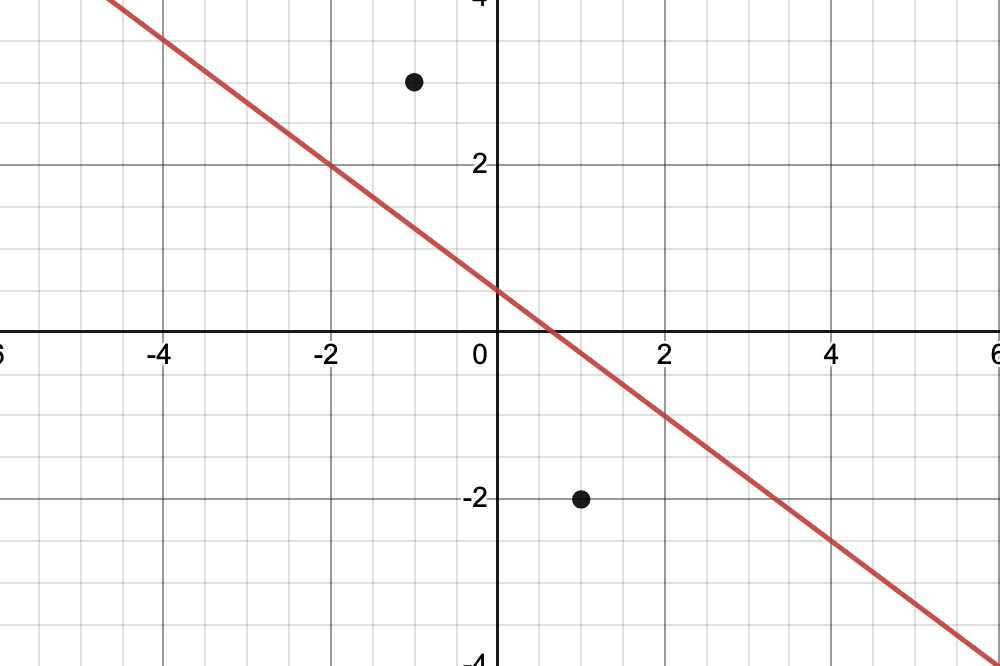}
\caption{\label{fig:graph54}Perpendicular bisector of polynomials $-1+2x$ and $1-3x$ mapped to $\mathbb{}R^2$}
\end{figure}

\end{example}

\subsubsection{Curves and Conics}

In the same spirit as before, we will now consider curves and conics in different vector spaces. Recalling the previous subsection, we can find that we have already explored the ellipse in different vector spaces and how it maps to $\mathbb{R}^2$. As such, refer to \ref{fig:graph47} to see how an ellipse with focus points $2x, -2x \in U$ maps to $\mathbb{R}^2$. 

A circle would be very much similar to en ellipse in that regard. For comparison, we will consider the circle centered around the polynomial $2x \in U$ with radius $\sqrt{10}$ and compare it to the ellipse. See Figure \ref{fig:graph55}, where the circle is the larger curve and the ellipse is the smaller.

\begin{figure}
\centering
\includegraphics[width=0.75\textwidth]{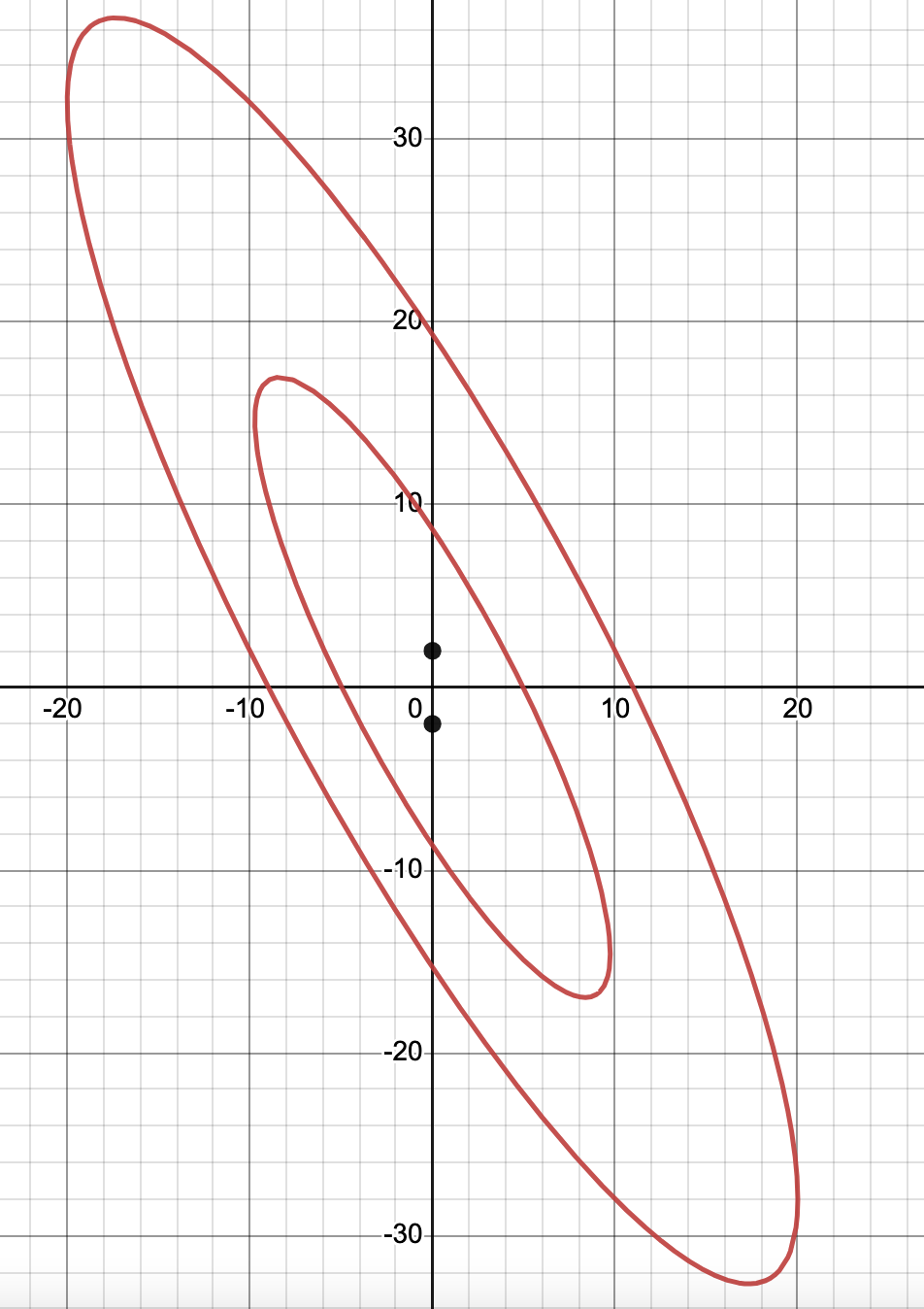}
\caption{\label{fig:graph55}Comparison of circle and ellipse in vector space $U$ mapped to $\mathbb{R}^2$}
\end{figure}

Finally, we can also consider hyperbolas in the vector space $U$ as a final example of how different the geometries in different vector spaces can be.

\begin{example}

Consider the hyperbola given by the equation:

\[
g(\left \| (x,y)-(0,2)\right \|,\left \| (x,y)-(0,-2)\right \|) = \left \| (x,y)-(0,2)\right \| - \left \| (x,y)-(0,-2)\right \| =
\]
\[
\sqrt{\frac{6x^2+6x(y-2)+2(y-2)^2}{6}} - \sqrt{\frac{6x^2+6x(y+2)+2(y+2)^2}{6}} = 1
\]

which we will compare to the hyperbola with focus points $(0,2), (0,-2) \in \mathbb{R}^2$ using the Euclidean definition of length. The graph of both these hyperbolas can be seen in Figure \ref{fig:graph56}

\begin{figure}
\centering
\includegraphics[width=0.75\textwidth]{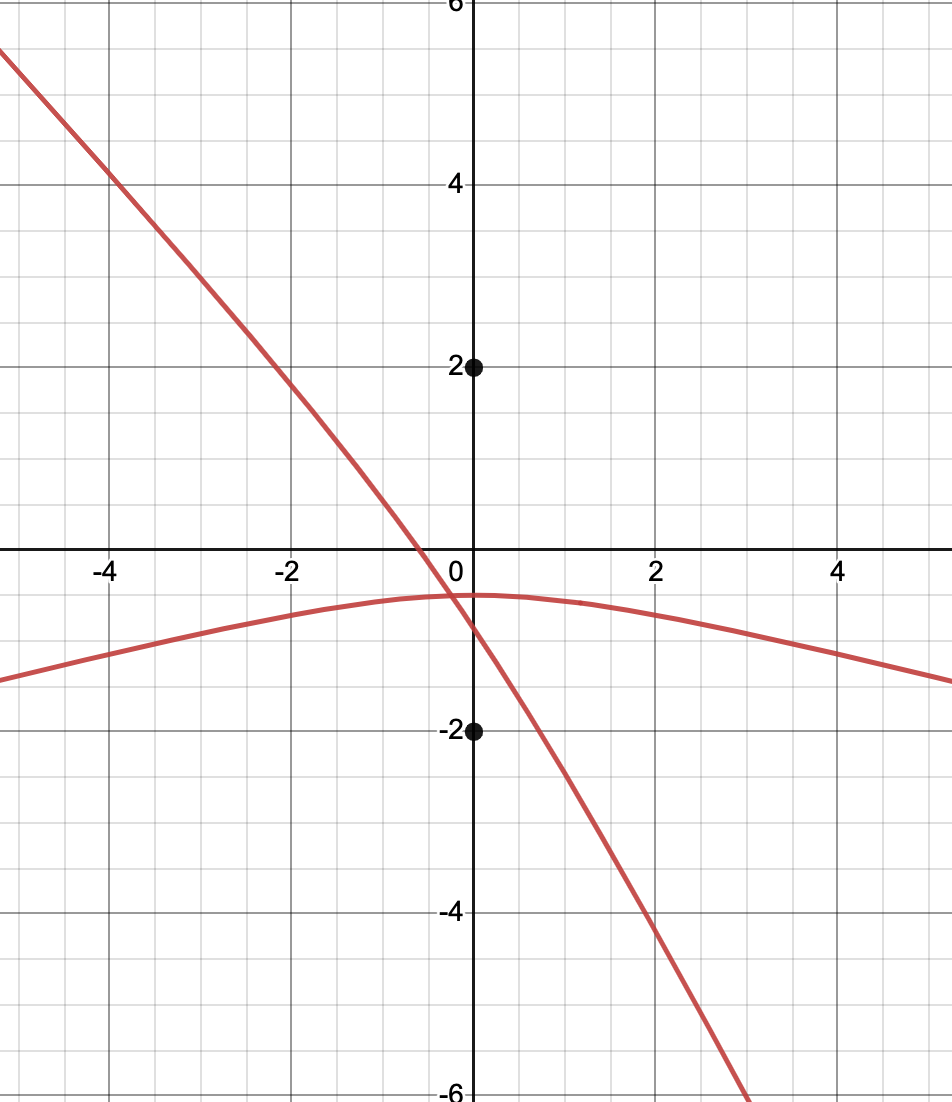}
\caption{\label{fig:graph56}Comparison of a hyperbola in vector space $U$ mapped to $\mathbb{R}^2$ and a hyperbola in $\mathbb{R}^2$}
\end{figure}

\end{example}

Evidently, these are but examples of what could be in the various different vector spaces and their multiple potential definitions of a norm. However, this does give insight to the fact that not all vector spaces share the same geometric properties and, thus, do not necessarily behave as one would expect in its isomorphic $\mathbb{R}^n$. 

\section{Acknowledgements}

I would like to express my gratitude for my Linear Algebra professor, Dr. Jone Lopez de Gamiz Zearra, who encouraged and supported me with this endeavor and pushed me to continue investigating and researching. I am thankful for her involvement in this paper. I would also like to thank my parents, who have given me their full and unconditional support for all these years. I also want to express my thankfulness towards Vanderbilt University by giving me the chance to conduct research with wonderful professors as well as obtaining a magnificent education. 

\section{Bibliography}

\end{document}